\documentclass[12pt]{amsart}

\usepackage{fullpage}

\usepackage{modulipaper} 

\title{Fundamental groups of moduli spaces of real weighted stable curves}
\author{Jake Levinson}
\address[J. Levinson]{D\'epartement de math\'ematiques et de statistique\\
Universit\'e de Montr\'eal\\
Montr\'eal, QC
H3T 1J4 \\
Canada}
\email[J. Levinson]{jake.levinson@umontreal.ca}

\author{Haggai Liu}
\address[H. Liu]{Department of Mathematics\\
Simon Fraser University\\
Burnaby, BC  V5A 1S6 \\
Canada}
\email[H. Liu]{haggail@sfu.ca}

\date{\today}

\thanks{First author partially supported by NSERC Discovery Grant RGPIN 2021-01469.\\ \indent Second author partially supported by a GDES at Simon Fraser University.}

\begin{document}

    \begin{abstract}
        The ordinary and $S_n$-equivariant fundamental groups of the moduli space $\overline{M_{0,n+1}}(\mathbb{R})$ of real $(n+1)$-marked stable curves of genus $0$ are known as \emph{cactus groups} $J_n$ and have applications both in geometry and the representation theory of Lie algebras.
        
        In this paper, we compute the ordinary and $S_n$-equivariant fundamental groups of the Hassett space of weighted real stable curves $\overline{M_{0,\mathcal{A}}}(\mathbb{R})$ with $S_n$-symmetric weight vector $\mathcal{A} = (1/a, \ldots, 1/a, 1)$, which we call \emph{weighted cactus groups} $J_n^a$. We show that $J_n^a$ is obtained from the usual cactus presentation by introducing braid relations, which successively simplify the group from $J_n$ to $S_n \rtimes \mathbb{Z}/2\mathbb{Z}$ as $a$ increases.
        
        Our proof is by decomposing $\overline{M_{0,\mathcal{A}}}(\mathbb{R})$ as a polytopal complex, generalizing a similar known decomposition for $\overline{M_{0,n+1}}(\mathbb{R})$. In the unweighted case, these cells are known to be cubes and are `dual' to the usual decomposition into associahedra (by the combinatorial type of the stable curve). For $\overline{M_{0,\mathcal{A}}}(\mathbb{R})$, our decomposition instead consists of products of permutahedra. The cells of the decomposition are indexed by weighted stable trees, but `dually' to the usual indexing.
    \end{abstract}

    \maketitle

    \section{Introduction}\label{sec:intro}

        Let $\DMumford{n+1}$ denote the Deligne-Mumford moduli space of stable $(n+1)$-pointed curves of genus $0$. The space $\DMumford{n+1}$ is a smooth projective variety \cite{deligneM1969, etingofHKR2010}, many of whose algebraic and geometric properties are well understood, as are many topological invariants of the complex manifold $\DMumford{n+1}(\C)$.
        
        The real locus $\DMumford{n+1}(\R)$, which parametrizes stable curves whose marked points and singularities are all real, is of interest in its own right and has rather different topological features than $\DMumford{n+1}(\mathbb{C})$. For example, Kapranov \cite{kapranov1993} and Devadoss \cite{devadoss1999} showed that $\DMumford{n+1}(\R)$ admits a cell decomposition that is a tiling by $\tfrac{1}{2} n!$ associahedra, while Davis-Januszkiewicz-Scott \cite{davisJS2003} (see also \cite{ilinKLPR2023}) gave a dual tiling by cubes. The cohomology of the real moduli space is also quite different from that of $\DMumford{n+1}(\C)$\cite{etingofHKR2010}, as are its homotopy groups. Notably, it has a very interesting fundamental group known as the \textbf{pure cactus group} $PJ_n := \pi_1(\overline{M_{0,n+1}}(\R))$ \cite{davisJS2003, henriquesK2004}. It is a subgroup of the \textbf{cactus group} $J_n$, which in turn can be thought of as the $S_n$-equivariant or orbifold fundamental group $\pi_1^{S_n}(\overline{M_{0,n+1}}(\R))$, where $S_n$ acts by permuting the first $n$ marked points. (See Section \ref{subsec:G-eq-pi1} for background on $G$-equivariant fundamental groups.) Specifically, $J_n$ has generators $s_{p,q}$ for $ 1 \le p < q \le n$, with the following \textbf{cactus relations}:
        \begin{enumerate}
            \item $s_{p,q}^2=1$ for each $p<q$;
            \item $s_{p,q}s_{m,r}=s_{m,r}s_{p,q}$ if $p<q<m<r$;
            \item $s_{p,q}s_{m,r}=s_{p+q-r,p+q-m}s_{p,q}$ if $p\le m<r\le q$.
        \end{enumerate}
        There is a natural map $J_n \to S_n$ sending $s_{p,q}$ to the permutation $w_{p,q}$ that reverses the interval $p, \ldots, q$; and the kernel of this map is $PJ_n$, yielding the short exact sequence
        \[1\to PJ_n \to J_n \to S_n \to 1.\]
        The presentation of $J_n$ above closely resembles that of braid group $B_n$, the fundamental group of the unordered configuration space of $n$ points in $\C$. The subgroup $PJ_n$ is then analogous to the pure braid group $PB_n$, the fundamental group of the ordered configuration space of $n$ points in $\C$. The analogy between cactus groups and braid groups is described further in \cite{devadoss1999, morava2001}.
        Beyond its role in real moduli of curves, the cactus group arises in the representation theory of Lie algebras and operads, in real Schubert calculus and in tableau combinatorics, wherein its generators $s_{p,q}$ act by involutions related to Sch\"utzenberger's \emph{tableau evacuation} \cite{halachevaKRW2020, halachevaLLY2023, limY2023, speyer14}.

        In this paper, we consider the real loci of the moduli spaces of weighted stable curves $\DMumford{\mcA}$ introduced by Hassett \cite{hassett2003}. For such a space, we assign a vector of weights $\mcA = (a_1, \ldots, a_{n+1}) \in (0,1]^{n+1}$ to the marked points. Roughly, the Hassett space $\DMumford{\mcA}$ parametrizes curves in which marked points are allowed to coincide as long as their total weight is $\leq 1$. All Hassett spaces $\DMumford{\mcA}$ are blowdowns of $\DMumford{n+1}$ and hence are birational to $\DMumford{n+1}$; in general they are used to relate $\DMumford{n+1}$ to simpler varieties such as projective space. When at least one $a_i$ equals $1$, $\DMumford{\mcA}$ is an iterated blowup of projective space and arises as an intermediate step in Kapranov's blowup construction of $\DMumford{n+1}$ from $\Pj^{n-2}$.

        We will specifically consider Hassett spaces parameterizing curves with $S_n$-symmetric weights: $n$ points of weight $\tfrac{1}{a}$ and one point of weight $1$, i.e. such that up to $a$ of the first $n$ points can coincide at a time. The $S_n$-action on $\DMumford{n+1}$ (fixing the $n+1$-st point) descends to such a Hassett space, and so we may define a weighted variant $J_n^a$ of the cactus group $J_n$.

        \subsection{Statement of Results}\label{subsec:stmt-of-results}
        
            We define weighted variants of $J_n$ and $PJ_n$, namely the ordinary and $S_n$-equivariant fundamental groups
            \[J_n^a := \pi_1^{S_n}(\overline{M_{0,\mathcal{A}}}(\R)),
            \quad
            PJ_n^a := \pi_1(\overline{M_{0,\mathcal{A}}}(\R))
            \]
            for $a=1,2,\hdots, n-1$ and $\mathcal{A} = \mcA(a) := (\frac{1}{a}, \ldots, \frac{1}{a}, 1)\in \Q^{n+1}$, which we call the \emph{weighted cactus group} and \emph{pure weighted cactus group}, respectively. (See Section~\ref{subsec:G-eq-pi1} for the definition of the $G$-equivariant fundamental group.)  Our main theorem generalizes the above presentation of the cactus group. We note that for $a=1$ or $2$, $\hassett{a}(\R)$ is isomorphic to $\DMumford{n+1}(\R)$, so $J_n^a$ is just the ordinary cactus group.

            \begin{thm}\label{thm:main-thm-pres-of-Jna}
                Let $a\ge 3$. The weighted cactus group $J_n^a$ has generators $s_{p,q}, 1\le p<q\le n$ satisfying either $q-p\in \{a,a+1, \ldots, n-1\}$ or $q-p=1$, 
                with the relations:
                \begin{enumerate}
                    \item $s_{p,q}^2=1$ for each $p<q$; \label{relation:involution}
                    \item $s_{p,q}s_{m,r}=s_{m,r}s_{p,q}$ if $p<q<m<r$;
                    \label{relation:commuting}
                    \item $s_{p,q}s_{m,r}=s_{p+q-r,p+q-m}s_{p,q}$ if $p\le m<r\le q$;
                    \label{relation:cactus}
                    \item (Braid relations) $(s_{i,i+1}s_{i+1,i+2})^3=1$ for $1\le i\le n-2$. 
                    \label{relation:braid}
                \end{enumerate}
            \end{thm}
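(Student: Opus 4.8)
The plan is to compute $\pi_1^{S_n}\bigl(\hassett{a}(\R)\bigr)$ directly from the decomposition of $\hassett{a}(\R)$ into products of permutahedra established above, following the method used for the unweighted case in \cite{davisJS2003, ilinKLPR2023}. Since for $a = 1, 2$ the Hassett space coincides with $\DMumford{n+1}(\R)$, we may assume $a \ge 3$. As a warm-up observation, the Hassett reduction morphism $\DMumford{n+1}(\R) \to \hassett{a}(\R)$ is $S_n$-equivariant and proper with connected fibers, hence induces a surjection $J_n \twoheadrightarrow J_n^a$; so it suffices to exhibit a compatible generating set together with the extra relations. The $S_n$-action is cellular for our polytopal decomposition, the cell stabilizers are finite and of ``reflection'' type, and the equivariant van Kampen theorem — in the form of the complex-of-groups presentation attached to the quotient $\hassett{a}(\R)/S_n$ and its cell stabilizers, as in \cite{davisJS2003, ilinKLPR2023} — yields a presentation of $\pi_1^{S_n}$ with generators from the edges and stabilizers of the quotient and relations from its $2$-cells, its stabilizer relations and the usual compatibilities, \emph{provided} the associated development is simply connected.

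To identify the generators, I would match the edges (equivalently, the codimension-one degenerations) with elementary moves on weighted stable curves. Such a move groups a block of consecutive marked points $p, p+1, \ldots, q$ in one of two ways: either all of them collide at a single unmarked point of the main component, which is allowed precisely when their total weight $(q-p+1)/a$ is at most $1$, i.e.\ $q - p + 1 \le a$; or they bubble off onto a one-nodal tail, which by Hassett's stability inequality $1 + (q-p+1)/a > 2$ is allowed precisely when $q - p + 1 \ge a+1$, i.e.\ $q - p \ge a$. A collision move of a block of size $\ge 3$ can be resolved, through the corresponding permutahedral factor, into a word in the adjacent moves $s_{i,i+1}$ realizing the longest element of the symmetric group on $\{p,\ldots,q\}$ (the distance in $\Pi_{q-p+1}$ from a vertex to its reverse being the number of inversions), so it contributes no new generator; this is exactly why the surviving generators are the $s_{p,q}$ with $q-p=1$ or $q - p \ge a$. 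Carrying this out means translating the ``dual'' indexing of cells by weighted stable trees from the construction above into these $s_{p,q}$ labels, which is the first genuinely combinatorial step.

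The relations come from the $2$-skeleton. A $2$-face of a product of permutahedra is either a square — two independent edge-directions, possibly lying in different permutahedral factors — or a hexagon, lying in a rank-$3$ permutahedral factor. The square $2$-faces give exactly the commuting relations \eqref{relation:commuting} (this already covers the interactions among the bubble-off generators and between them and the adjacent generators, and the squares inside a single higher permutahedron contribute nothing new). The hexagonal $2$-faces give exactly the braid relations \eqref{relation:braid}: a rank-$3$ permutahedral factor arises from a triple of consecutive marked points $i, i+1, i+2$ colliding at one point — possible only because $a \ge 3$ — and the hexagon it spans reads $(s_{i,i+1}s_{i+1,i+2})^3 = 1$. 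These hexagons are the new feature relative to the cube complex of \cite{davisJS2003}, all of whose $2$-faces are squares. Finally, the involution relations \eqref{relation:involution} come from each reflection-type facet being identified with itself, and the cactus relations \eqref{relation:cactus} from the codimension-two structure of the $S_n$-development, exactly as in \cite{davisJS2003, ilinKLPR2023} (after rewriting the ``inner'' generators with $2 \le r-m \le a-1$ in terms of the surviving ones). A sequence of Tietze transformations eliminating the generators $s_{p,q}$ with $2 \le q - p \le a-1$ via the cactus relations then puts the presentation into the stated form, and simultaneously exhibits $J_n^a$ as the quotient of $J_n$ by the braid relations.

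I expect the main difficulty to be twofold. First, completeness of the presentation requires that the development of the quotient complex of groups be simply connected — equivalently, that $\hassett{a}(\R)$ be aspherical — and the plan is to put a piecewise-Euclidean metric on the polytopal complex for which it is nonpositively curved, generalizing the $\mathrm{CAT}(0)$ cube complex of \cite{davisJS2003}; but the Gromov link condition for a complex of permutahedra is considerably more delicate than for a cube complex, and verifying it (or otherwise establishing asphericity, e.g.\ by an explicit deformation retraction or a subdivision into a nonpositively curved cube complex) will need a separate argument. Second — and I expect this to absorb most of the work — one must pin down the combinatorics of the $1$- and $2$-skeleton under the dual weighted-stable-tree indexing: in particular that the rank-$3$ permutahedral factors, hence the hexagonal $2$-faces, occur exactly for consecutive triples $i, i+1, i+2$; that every $2$-face of the quotient is accounted for by one of the relations \eqref{relation:involution}--\eqref{relation:braid}; and that the bubble-off generators $s_{p,q}$ with $q - p \ge a$ interact, with each other and with the adjacent generators, only through relations already among \eqref{relation:commuting} and \eqref{relation:cactus}.
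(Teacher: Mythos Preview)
Your overall strategy—read generators and relations from the $1$- and $2$-skeleton of the permutahedral decomposition—matches the paper's, and your identification of the generators and of the hexagonal $\Pi_3$ cells as the source of the braid relations is correct. But your first anticipated difficulty is a red herring: asphericity (or a $\mathrm{CAT}(0)$ metric) is \emph{not} needed for completeness of the presentation. The equivariant fundamental group $\pi_1^{S_n}(X)$, like the ordinary one, depends only on the $2$-skeleton; equivalently, it is the group of lifts of the $S_n$-action to the universal cover $\tilde X$, and $\tilde X$ is simply connected by definition, regardless of the higher homotopy of $X$. So the ``development'' in your complex-of-groups picture is automatically simply connected. (You would need asphericity to show $\hassett{a}(\R)$ is a $K(\pi,1)$, but that is a separate, harder, and for this theorem irrelevant question.) The paper makes this step elementary by passing to the oriented double cover $\hassettDC{a}(\R)$, on which $S_n$ acts simply transitively on the $0$-cells; one then obtains $\tilde{J}_n^a$ by the usual one-generator-per-edge, one-relator-per-$2$-cell recipe, and recovers $J_n^a = \tilde{J}_n^a \rtimes \langle s_{1,n}\rangle$.

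Two smaller corrections to your account of the $2$-skeleton. First, the cactus relation~\eqref{relation:cactus} is read directly off a square $2$-cell (the $[-1,1]^2$ cells with two nested internal edges, and some of the $[-1,1]\times\Pi_2$ cells), not from some separate ``codimension-two structure of the development''. Second, no Tietze transformations are needed to eliminate $s_{p,q}$ with $2\le q-p\le a-1$: such paths are simply not $1$-cells of the weighted decomposition, so they never appear as generators to begin with. With the asphericity detour removed, the remaining work is exactly the four-way case analysis of $2$-cells ($\Pi_3$, $\Pi_2\times\Pi_2$, $[-1,1]\times\Pi_2$, $[-1,1]^2$), which is routine.
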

            Let $w_{p,q}\in S_n$ be the permutation that reverses the interval $p,\ldots,q$. For each $a$, there is a short exact sequence
            \begin{equation}
            \label{eqn:SES-wted-Jna}
            1 \to PJ_n^a \to J_n^a \to S_n \to 1
            \end{equation}
            sending $s_{p,q} \mapsto w_{p,q}$ as in the unweighted case.

            The relations \eqref{relation:involution}, \eqref{relation:commuting}, and \eqref{relation:cactus} are the usual cactus relations on the restricted set of generators. 
            For $n\ge 3$ and $a = n-1$, $\hassett{a}(\R)$ is the real projective space $\R\Pj^{n-2}$ and Eq. \eqref{eqn:SES-wted-Jna} becomes
            \begin{equation} \label{eqn:RPn-semidirect}
                1 \to \Z/2\Z \to S_n \rtimes \Z/2\Z \to S_n \to 1,
            \end{equation}
            where the action of $\Z/2\Z$ is given by conjugating by $w_{1,n}$.

            For each $a$, the map $J_n^a \to S_n$ maps the subgroup generated by the elements 
            $$\{s_{i,i+1} : 1 \leq i < n\}$$
            surjectively to $S_n$. Thus, by the braid relations \eqref{relation:braid}, these elements generate a copy of $S_n \subseteq J_n^a$ as soon as $a \geq 3$. It follows that the natural surjection $J_n \onto J_n^a$ induced by the blowdown morphism $\overline{M_{0,n+1}} \to \hassett{a}$ can be described by
            \begin{equation}\label{eqn:surjection-of-cactus-groups}
            s_{p,q} \mapsto
            \begin{cases}
            s_{p,q} \in J_n^a & \text{if } q-p+1 > a, \\
            w_{p,q} \in S_n \subseteq J_n^a & \text{if } q-p+1 \leq a.
            \end{cases}
            \end{equation}
            This map is surjective on fundamental groups, so taking $a$ from $1$ to $n-1$ gives a sequence of quotients
            \begin{equation}\label{eqn:sequence-quotients}
                J_n = J_n^1 = J_n^2 \twoheadrightarrow J_n^3 \twoheadrightarrow \cdots \twoheadrightarrow J_n^{n-1} = S_n \rtimes \Z/2\Z.
            \end{equation}

    \begin{rmk}[Presentations of $PJ_n^a$]
        Unlike for the pure braid group, a presentation of the pure cactus group is unknown, even for the unweighted moduli space, although generators for $PJ_n$ as a normal subgroup of $J_n$ were given in \cite{etingofHKR2010}. It would be interesting to find presentations for $PJ_n^a$ for all $a$, particularly since the group simplifies as $a$ grows.
    \end{rmk}

        Our proof of Theorem~\ref{thm:main-thm-pres-of-Jna} relies on constructing a cellular decomposition of $\hassett{a}(\R)$ into polytopes, which is dual to the standard decomposition by the topological type of the curve. As mentioned above, $\DMumford{n+1}(\R)$ admits a dual (Coxeter)~\cite{davisJS2003} cell decomposition into (essentially) cubes, which is closely related to the above presentation of the cactus group. We construct a similar polytopal decomposition for $\hassett{a}(\R)$, by merging various cells along the blowdown maps from $\DMumford{n+1}(\R)$. We note that in $\DMumford{n+1}(\R)$, these unions of cells (cubes) are typically non-contractible. Hence, the main difficulty is in showing that the resulting unions are once again cells after blowing down (in particular, that they are contractible). The resulting decomposition has cells indexed by labeled trees, and the cells turn out to be combinatorially equivalent to products of permutahedra. 

        Both the standard and dual decompositions of $\hassett{a}(\R)$ are indexed by the set $\StRtree([n]; a)$ of \emph{$a$-stable trees} (see Section \ref{sec:a-stable-trees}), which are certain trees $\tau$ with leaves labeled by nonempty subsets $A_\ell \subseteq [n]$ of size $\leq a$, forming a set decomposition $\coprod_\ell A_\ell = [n]$.
        For each $(C; x_\bullet) \in \hassett{a}(\R)$, we run a \emph{distance algorithm} (see Definitions \ref{def:dist-alg-diff-to-rtree} and \ref{alg:a-weighted-diff-vec-to-rtree}) which yields an $a$-stable tree $\tau^{\mathrm{dual}}(C;x_\bullet) \in \StRtree([n]; a)$. For each $a$-stable tree $\tau$, we define the locally closed \emph{dual cell} $W_\tau$ to be the set of curves for which $\tau^{\mathrm{dual}}(C; x_\bullet) = \tau$.

        Let $\Pi_k\subseteq \R^{k}$ denote the $(k-1)$-dimensional permutahedron (see Equation \eqref{eqn:def-permutahedron}). The dual decomposition is built as follows.

        \begin{thm} \label{thm:intro-dual-cell-decomposition}
            There is a polytopal decomposition 
            $$\hassett{a}(\R) = \coprod_{\tau} W_\tau$$
            where $\tau$ ranges over the $a$-stable trees. If $\tau$ has $d$ internal edges and leaves labeled $A_1, \ldots, A_k$, where $\coprod_{i=1}^k A_i = [n]$, the closure $\overline{W_\tau}$ has the form
            \[
            \overline{W_\tau} \cong [-1, 1]^d \times \prod_{i=1}^k \Pi_{|A_i|}.
            \]
        \end{thm}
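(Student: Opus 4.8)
The plan is to transport the known cube decomposition of $\DMumford{n+1}(\R)$ across the reduction morphisms and glue. Definitions~\ref{def:dist-alg-diff-to-rtree} and~\ref{alg:a-weighted-diff-vec-to-rtree} make $\tau^{\mathrm{dual}}$ a well-defined map $\hassett{a}(\R)\to\StRtree([n];a)$, which is surjective by exhibiting a curve realizing each $a$-stable tree; hence $\hassett{a}(\R)=\coprod_\tau W_\tau$ with every $W_\tau$ locally closed, and the task is to identify each closure $\overline{W_\tau}$ with the claimed polytope and to check that the pieces form a regular CW complex. The base case is $a\le 2$: there $\hassett{a}(\R)\cong\DMumford{n+1}(\R)$, the leaf labels have size $\le 2$, the factors $\Pi_1$ and $\Pi_2$ are a point and a segment, and the assertion is the Davis--Januszkiewicz--Scott cube decomposition. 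For $a\le n-2$ the reduction morphism $\hassett{a}(\R)\to\hassett{a+1}(\R)$ contracts, for each $(a{+}1)$-element subset $S\subseteq[n]$, the boundary locus on which the points of $S$ lie on their own bubble; over $\R$ this is a bundle whose fibers are the real projective spaces $\R\Pj^{a-1}$ (the exceptional loci of the blowdown), collapsed once the $a{+}1$ points are permitted to coincide. Iterating along $\DMumford{n+1}(\R)=\hassett{2}(\R)\to\hassett{3}(\R)\to\cdots\to\hassett{a}(\R)$ reduces everything to analyzing one such contraction at a time.

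For a fixed $a$-stable tree $\tau$ with internal edges $e_1,\dots,e_d$ and leaf labels $A_1,\dots,A_k$, I would first identify the cubes $W_{\tau'}$ of $\DMumford{n+1}(\R)$ --- with $\tau'$ ranging over $1$-stable trees --- whose images under the composite reduction lie in $W_\tau$: these $\tau'$ are obtained from $\tau$ by expanding each fat leaf $A_\ell$ into a caterpillar on $|A_\ell|$ singleton leaves, one for each maximal chain of ordered set partitions of $A_\ell$ (equivalently, one for each linear order), and the reduction glues the corresponding cubes along the subcomplex on which the not-yet-separated points of $A_\ell$ remain indistinguishable. The permutahedron is then forced: the data destroyed over a fat leaf $A_\ell$ is precisely the infinitesimal resolution of $|A_\ell|$ collided points on a real line, which is governed by the braid arrangement on $\R^{A_\ell}$, whose normal fan is dual to $\Pi_{|A_\ell|}$ and whose faces --- ordered set partitions of $A_\ell$ --- record the admissible partial collisions. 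Using the coordinates supplied by the distance algorithm --- $d$ parameters in $[-1,1]$ for the internal edges, the value $\pm 1$ meaning that $e_i$ is contracted and a node created (hence passage to a boundary cell $W_{\tau'}$), together with a point of $\Pi_{|A_\ell|}$ for each leaf --- I would produce a homeomorphism $\overline{W_\tau}\cong[-1,1]^d\times\prod_{\ell=1}^k\Pi_{|A_\ell|}$, and verify that the cells $W_{\tau'}\subseteq\overline{W_\tau}\setminus W_\tau$ --- the trees obtained from $\tau$ by edge contractions and by splitting a fat leaf along an ordered set partition --- are in face-poset-preserving bijection with the proper faces of this product polytope. Together these upgrade the set-theoretic decomposition to the asserted polytopal one.

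The main obstacle is exactly that $\overline{W_\tau}$ must be shown to be a ball, even though its preimage in $\DMumford{n+1}(\R)$ under the blowdown is a union of cubes that is typically \emph{not} contractible --- it can deformation-retract onto a positive-dimensional fiber such as an $\R\Pj^j$ --- so no formal ``a union of cells is a cell'' argument is available. The plan is to establish the local statement that each elementary reduction, restricted to a neighborhood of a contracted fiber, merges the belt of cubes around it into a single cell in which exactly one permutahedron factor is enlarged from $\Pi_{m-1}$ to $\Pi_m$; this is compatible with the inductively known boundary structure because every facet of $\Pi_m$ is itself a product $\Pi_{|S|}\times\Pi_{m-|S|}$, and contractibility then follows from convexity of $\Pi_m$ once the reduction morphism, written in suitable local coordinates near the contracted locus, is matched by an explicit piecewise-linear model with the collapsing projection onto $\R^d\times\Pi_m$. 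The genuinely technical part is the bookkeeping behind all of this: following the distance algorithm and the face operations on $a$-stable trees --- edge contraction and cluster merging --- precisely enough to pin down which cubes are identified, to confirm that the gluing locus is the expected subcomplex, and to conclude that the outcome is the stated product of a cube with permutahedra.
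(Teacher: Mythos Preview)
Your strategy is sound and you have correctly isolated the crux: the preimage in $\DMumford{n+1}(\R)$ of a weighted dual cell is a non-contractible union of cubes, so one must actually \emph{construct} a homeomorphism to the permutahedron rather than deduce it formally. However, the proposal stops precisely where the work begins. You assert that in ``suitable local coordinates'' the reduction can be ``matched by an explicit piecewise-linear model'' identifying the merged cell with a product in which one permutahedron factor grows from $\Pi_{m-1}$ to $\Pi_m$, and you concede that ``the genuinely technical part is the bookkeeping'' --- but neither the coordinates, nor the model, nor the bookkeeping are supplied. This is the entire content of the theorem; without it you have an outline, not a proof. (A minor point: the $1$-stable trees $\tau'$ in $\varpi_a^{-1}(\tau)$ are obtained by expanding each fat leaf $A_\ell$ into an \emph{arbitrary} stable subtree on $A_\ell$, not only a caterpillar; your ``one for each linear order'' counts only the top-dimensional cubes.)

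The paper's route is also different and avoids your induction on $a$ entirely. Rather than step through $\hassett{2}\to\hassett{3}\to\cdots$, it proves a direct product decomposition (Proposition~\ref{prop:a-proper-dual-cell-product-decmop}): for any $a$-stable $\tau$ with leaves $A_1,\dots,A_r$, there is a homeomorphism
\[
\overline{W_\tau}\ \cong\ \overline{W_{\tau_0}}\times\prod_{i=1}^r \overline{W_{\tau_i}},
\]
where $\tau_0$ is the ordinary stable tree obtained by collapsing each $A_i$ to a single leaf (so $\overline{W_{\tau_0}}\cong[-1,1]^{|E_{\Int}(\tau)|}$ by the unweighted case), and each $\tau_i$ is the two-leaf $|A_i|$-stable tree in an $(|A_i|{+}2)$-marked moduli space. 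This reduces everything to a single concrete computation: showing $\overline{W_{\tau_i}}\cong\Pi_{|A_i|}$ when $a=n-1$ and $\tau$ is the unique two-leaf tree (Theorem~\ref{thm:homeo-permu-dual-cell}). That identification is done by writing down an explicit star-shaped map $\varphi:\Pi_{n-1}\to\overline{W_\tau}$, defined inductively over the faces of $\Pi_{n-1}$ by rescaling the successive gaps $d_i$ internal to each block of a composition, and checking it is a continuous bijection of compact Hausdorff spaces. Your inductive-on-$a$ approach could in principle be made to work, but it multiplies the number of contractions to analyze, whereas the paper's product decomposition isolates exactly one nontrivial case and dispatches it with explicit coordinates.
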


\begin{figure}
    \centering
    $$
    \vcenter{
    \hbox{
    \scalebox{0.8}{
\begin{tikzpicture}
    \def \radius {3.5}
    \begin{scope}[shift={(0, 0)}, scale=0.7]

    \def \smallradius {1}
     \fill[cyan!20] (0:\radius) -- (0:\smallradius) -- (60:\smallradius) -- (60:\radius) -- cycle;
     \fill[yellow!50] (60:\radius) -- (60:\smallradius) -- (120:\smallradius) -- (120:\radius) -- cycle;
     \fill[magenta!20] (120:\radius) -- (120:\smallradius) -- (180:\smallradius) -- (180:\radius) -- cycle;
     \fill[cyan!20] (180:\radius) -- (180:\smallradius) -- (240:\smallradius) -- (240:\radius) -- cycle;
     \fill[yellow!50] (240:\radius) -- (240:\smallradius) -- (300:\smallradius) -- (300:\radius) -- cycle;
     \fill[magenta!20] (300:\radius) -- (300:\smallradius) -- (360:\smallradius) -- (360:\radius) -- cycle;

    \draw (\radius, 0) node {$\bullet$}
    	-- (60:\radius) node {$\bullet$}
	-- (120:\radius) node {$\bullet$}
	-- (180:\radius) node {$\bullet$}
	-- (240:\radius) node {$\bullet$}
	-- (300:\radius) node {$\bullet$}
	-- (360:\radius) node {$\bullet$};

    \foreach \a in {0, 60, 120, 180, 240, 300}
        \draw (\a:\radius) -- (\a:\smallradius);
        
   \begin{scope}[shift={(271:\radius*0.79)}]
   \draw (0, 0) -- (-0.3, 0.2);
   \draw (-0.3, 0.2) -- (-0.6, 0.5) node[above] {$23$};
   \draw (-0.3, 0.2) -- (0, 0.5) node[above] {$1$};
   \draw (0, 0) -- (0.6, 0.5) node[above] {$4$};
   \end{scope}

   \begin{scope}[shift={(221:\radius*0.7)}]
   \draw (0, 0) -- (-0.3, 0.2);
   \draw (-0.3, 0.2) -- (-0.6, 0.5) node[above] {$2$};
   \draw (-0.3, 0.2) -- (0, 0.5) node[above] {$13$};
   \draw (0, 0) -- (0.6, 0.5) node[above] {$4$};
   \end{scope}

   \begin{scope}[shift={(319:\radius*0.7)}]
   \draw (0, 0) -- (-0.3, 0.2);
   \draw (-0.3, 0.2) -- (-0.6, 0.5) node[above] {$3$};
   \draw (-0.3, 0.2) -- (0, 0.5) node[above] {$12$};
   \draw (0, 0) -- (0.6, 0.5) node[above] {$4$};
   \end{scope}   
   \end{scope}

   \draw (3.5, 0) node {$\longrightarrow$};

    \begin{scope}[shift={(8, 0)}, scale=0.7]

     \fill[cyan!40!lime!20] (0:\radius) -- (60:\radius) -- (120:\radius) -- (180:\radius) -- (240:\radius) -- (300:\radius) -- cycle;
    
    \draw (\radius, 0) node {$\bullet$}
    	-- (60:\radius) node {$\bullet$}
	-- (120:\radius) node {$\bullet$}
	-- (180:\radius) node {$\bullet$}
	-- (240:\radius) node {$\bullet$}
	-- (300:\radius) node {$\bullet$}
	-- (360:\radius) node {$\bullet$};

    \draw (0, -0.5) -- (-0.5, 0) node[above] {$123$};
    \draw (0, -0.5) -- (0.5, 0) node[above] {$4$};

   \begin{scope}[shift={(135:\radius*1.1)}]
   \foreach \x/\i in {-0.6/1, -0.2/2, 0.2/3, 0.6/4}
    	\draw (0, 0) -- (\x, 0.5) node[above] {$\i$};
   \end{scope}
   \begin{scope}[shift={(45:\radius*1.1)}]
   \foreach \x/\i in {-0.6/1, -0.2/3, 0.2/2, 0.6/4}
    	\draw (0, 0) -- (\x, 0.5) node[above] {$\i$};
   \end{scope}
   \begin{scope}[shift={(180:\radius*1.25)}]
   \foreach \x/\i in {-0.6/2, -0.2/1, 0.2/3, 0.6/4}
    	\draw (0, 0) -- (\x, 0.5) node[above] {$\i$};
   \end{scope}
   \begin{scope}[shift={(0:\radius*1.25)}]
   \foreach \x/\i in {-0.6/3, -0.2/1, 0.2/2, 0.6/4}
    	\draw (0, 0) -- (\x, 0.5) node[above] {$\i$};
   \end{scope}
   \begin{scope}[shift={(230:\radius*1.4)}]
   \foreach \x/\i in {-0.6/2, -0.2/3, 0.2/1, 0.6/4}
    	\draw (0, 0) -- (\x, 0.5) node[above] {$\i$};
   \end{scope}
   \begin{scope}[shift={(310:\radius*1.4)}]
   \foreach \x/\i in {-0.6/3, -0.2/2, 0.2/1, 0.6/4}
    	\draw (0, 0) -- (\x, 0.5) node[above] {$\i$};
   \end{scope}
   \begin{scope}[shift={(270:\radius*1.25)}]
   \foreach \x/\i in {-0.6/23, 0/1, 0.6/4}
    	\draw (0, 0) -- (\x, 0.5) node[above] {$\i$};
   \end{scope}
   \end{scope}
\end{tikzpicture}
} 
}}
$$
\caption{
A local picture of the blowdown map $\DMumford{\mcA(2)}(\R) \to \DMumford{\mcA(3)}(\R)$, with $5$ marked points. Left: three square regions homeomorphic to $[-1, 1] \times \Pi_2$, each indexed by a $2$-stable tree. Note that on the inner hexagon, antipodal points are identified, giving a copy of $\mathbb{RP}^1$ (the exceptional divisor). 
Right: Contracting the inner boundary to a point and merging the three cells yields a copy of $\Pi_3 \subseteq \hassett{3}(\R)$ indexed by a $3$-stable tree. On the right, we also show vertex labels and one illustrative edge label, which do not change. The interior edges on the left are labeled by $2$-stable trees whose \emph{$a$-compression} is the $3$-stable tree labeling the entire $\Pi_3$: 
{\protect
\begin{tikzpicture}[scale=0.7]
    \protect\draw (0, 0) -- (-0.3, 0.2);
    \protect\draw (-0.3, 0.2) -- (-0.6, 0.5) node[above] {$1$};
    \protect\draw (-0.3, 0.2) -- (-0.3, 0.5) node[above] {$2$};
    \protect\draw (-0.3, 0.2) -- (0, 0.5) node[above] {$3$};
    \protect\draw (0, 0) -- (0.6, 0.5) node[above] {$4$};
\protect\end{tikzpicture}, 
\protect
\begin{tikzpicture}[scale=0.7]
    \protect\draw (0, 0) -- (-0.3, 0.2);
    \protect\draw (-0.3, 0.2) -- (-0.6, 0.5) node[above] {$1$};
    \protect\draw (-0.3, 0.2) -- (-0.3, 0.5) node[above] {$3$};
    \protect\draw (-0.3, 0.2) -- (0, 0.5) node[above] {$2$};
    \protect\draw (0, 0) -- (0.6, 0.5) node[above] {$4$};
\protect\end{tikzpicture}, 
    \protect\begin{tikzpicture}[scale=0.7]
    \protect\draw (0, 0) -- (-0.3, 0.2);
    \protect\draw (-0.3, 0.2) -- (-0.6, 0.5) node[above] {$2$};
    \protect\draw (-0.3, 0.2) -- (-0.3, 0.5) node[above] {$1$};
    \protect\draw (-0.3, 0.2) -- (0, 0.5) node[above] {$3$};
    \protect\draw (0, 0) -- (0.6, 0.5) node[above] {$4$};
\protect\end{tikzpicture}}.}
\label{fig:hexagon-merger}
\end{figure}

        Figure~\ref{fig:hexagon-merger} illustrates Theorem~\ref{thm:intro-dual-cell-decomposition} for $n=4, a=3$, where three cubes (each shaped like an hourglass) merge into a $\Pi_3$ cell. See Figure~\ref{fig:illust-of-3d-permutahedron} for a more complicated example in three dimensions, showing the decomposition of a three-dimensional permutahedron.

        Our results thus shed light both on the real geometry of the Hassett space and on the algebraic structure of the standard cactus group $J_n$ as an iterated extension of the symmetric group.
        
        The remainder of the paper is structured as follows. In Section \ref{sec:background}, we discuss the relevant classes of trees, stable curves and moduli spaces and give some background on $G$-equivariant fundamental groups. We construct the cell decomposition of Theorem~\ref{thm:intro-dual-cell-decomposition} in Section~\ref{sec:dual-cell}. Finally, we read off the presentation $J_n^a$ in Theorem \ref{thm:main-thm-pres-of-Jna} by examining the $2$-skeleton of the cell complex.

        \begin{rmk}[Double Covers]
            The factor of $\Z/2\Z$ in \eqref{eqn:sequence-quotients} appears because $\DMumford{n+1}(\R)$ allows orientation-reversing changes of coordinates on $\mathbb{P}^1$. These factors disappear when considering 
            the natural double cover $\hassettDC{a}(\R) \onto \hassett{a}(\R)$ given by orienting the curve at its $(n+1)$-st marked point. We discuss this in Section~\ref{subsec:cubes-in-2:1-cover-unweighted} for the unweighted case $a=1,2$.
        \end{rmk}

    \subsection{Acknowledgments}

        The authors thank Joel Kamnitzer for helpful conversations on $G$-equivariant fundamental groups.

    \section{Setup and Background}\label{sec:background}

    \subsection{Stable and $a$-stable trees}
    \label{subsec:rooted-trees-defs}
    
    Let $A\neq \varnothing$ be a finite set.
    A \textit{rooted ordered tree} (or simply a \textit{rooted tree}, also referred to as a \emph{planar tree}) with leaf set $A$ is a rooted tree $\tau$ with, for each vertex, an ordering on its set of children, such that $\tau$ has exactly $|A|$ leaves labeled by the elements in $A$, and each internal vertex has $\ge 2$ children. We use elements of $A$ to refer (by abuse of notation) to both the leaf vertex and its label. For an internal edge $e\in\tau$, we write $e=(u,w)$, where $u,w$ are the vertices adjacent to $e$, with $u$ as the parent of $e$ and $w$ as the child of $e$. We write $V(\tau), E(\tau)$ for the vertex and edge set of $\tau$, respectively, and $V_{\Int}(\tau), E_{\Int}(\tau)$ for the sets of internal vertices and internal edges. We write $c(v)$ for the number of children of the vertex $v$. We equip $V(\tau)$ with the partial order with the root as the greatest element and the leaves as the minima:
    \begin{equation}\label{eqn:rtree-partial-order}
        v\le w \iff \text{the unique path from $v$ to the root contains $w$.}
    \end{equation}
    We write $\Rtree(A)$ for the set of all such trees on $A$. If $A=[n]$ and $\sigma \in S_n$ is a permutation, we write
    $$\Rtree^\sigma([n]) \subseteq \Rtree([n])$$
    for the set of $\tau$ for which the leaf labels, in order from left to right, are $\sigma^{-1}(1), \ldots, \sigma^{-1}(n).$
    
    We next consider trees in which some or all the internal vertices are `flippable'. Given a tree $\tau\in \Rtree(A)$ and a vertex $v \in V_{\Int}(\tau)$, the \emph{flip of $\tau$ at $v$} is given by reversing the ordering of the children at every vertex in the subtree rooted at $v$, including $v$ itself. 
    Let $\rflip: \Rtree(A)\to \Rtree(A)$ be the involution that maps a tree to its flip at the root.
    
    A \emph{stable tree} (on $A$) is an equivalence class of trees $\tau\in \Rtree(A)$, considered up to flips at all internal vertices. We write $\StRtree(A)$ for the set of stable trees on $A$. 
    
    A \emph{refined stable tree} (on $A$) is the data of a rooted ordered tree $\tau$ on $A$, together with a subset $F\subseteq V_{\Int}(\tau)$ of its vertices that are called \emph{flippable}, which is required to include the root. We consider two such trees equivalent if they differ by flips at flippable vertices and have the same set of flippable vertices. We write $\RStRtree(A)$ for the set of refined stable trees on $A$ up to equivalence.
    
    We will see that stable trees index both the standard and dual stratifications of $\overline{M_{0,n+1}}(\R)$, while refined stable trees index the cells of the common refinement of the standard and dual stratifications.

    \begin{rmk}
        There is a map 
        \begin{equation} \label{eqn:map-r-rst-tree}
            \Rtree(A) \to \RStRtree(A)
        \end{equation}
        by marking the root vertex (only) as flippable. These refined stable trees will correspond to refined cells contained in $M_{0,n}$ (i.e. irreducible curves).
    \end{rmk}

    \begin{rmk}\label{rmk:rst-to-st-trees}
        There are two maps from refined stable trees to stable trees. First, we have
        \begin{equation} \label{eqn:map-std-rst-st-tree}
        \RStRtree(A) \to \StRtree(A)
        \end{equation}
        by contracting all edges $(v, w)$ for which the child $w$ is non-flippable. Second, we have
        \begin{equation} \label{eqn:map-dual-rst-st-tree}
        \RStRtree(A) \to \StRtree(A)
        \end{equation}
        by marking every vertex as flippable. We will see that, in terms of cell structures, these maps assign to each refined cell the standard and dual cell, respectively, in which it lies.
    \end{rmk}

    \subsubsection{Weighted stable trees}
    \label{sec:a-stable-trees}

    A \emph{composition} of the set $A$ is a sequence $A_\bullet = (A_1, \ldots, A_r)$ of nonempty, pairwise disjoint subsets of $A$ whose union is $A$. If each $|A_i| \leq k$, we say $A_\bullet$ is a \emph{$k$-composition}. A composition $B_\bullet$ \emph{refines} $A_\bullet$ if $A_\bullet$ can be obtained from $B_\bullet$ by merging consecutive parts of $B_\bullet$, and we call $B_\bullet$ a \emph{proper refinement} if $A_\bullet \ne B_\bullet$. We write $P(A_\bullet)$ for $A_\bullet$ considered as an (unordered) set partition of $A$. We say a composition $A_\bullet$ of $[n]$ is \emph{compatible} with a permutation $\sigma \in S_n$ if $(\{\sigma(1)\}, \ldots, \{\sigma(n)\})$ refines $A_\bullet$.

    Let $\tau$ be a rooted tree with leaves labeled by subsets of $[n]$, such that the leaf labels form a composition $A_\bullet$ of $[n]$. Let $a \in \{1, \ldots, n-1\}$. We say $\tau$ is \emph{$a$-stable} if
    \begin{enumerate}
        \item[(i)] $A_\bullet$ is an $a$-composition, and
        \item[(ii)] For all $v \in V_{\Int}(\tau)$, if the subtree of $\tau$ rooted at $v$ has leaves labeled $A_i, \ldots, A_j$, then $|A_i| + \cdots + |A_j| \geq a+1$.
    \end{enumerate}
    We define $\Rtree(a; [n])$ as the set of all such trees, and we define $\StRtree(a; [n])$ and $\RStRtree(a; [n])$ as above, respectively, considering $\tau$ up to flips at all vertices, or at a specified set $F \subseteq V_{\Int}(\tau)$ of flippable vertices including the root. For $a=1$, we recover the sets above.

    There are \emph{$a$-compression} maps
    \begin{align*}
        \varpi_a : \Rtree([n]) &\to \Rtree(a; [n]), \\
        \varpi_a : \RStRtree([n]) &\to \RStRtree(a; [n]), \\
        \varpi_a : \StRtree([n]) &\to \StRtree(a; [n]),
    \end{align*}
    defined by contracting the subtree rooted at $v \in V_{\Int}(\tau)$ whenever that subtree contains $\leq a$ leaves. In doing so, $v$ becomes a leaf, labeled by the union of the corresponding leaves. In the same way, we have compression maps $\varpi_{a, b}$ from $a$-stable to $b$-stable trees, whenever $a \leq b$, by contracting subtrees that do not satisfy condition (ii) of $b$-stability. Figure~\ref{fig:ex-of-tree-compressing-map} gives an example demonstrating the maps $\varpi_a$ and $\varpi_{a,b}$.

    \begin{figure}[h]
    
        \centering
        $$
        \vcenter{
        \hbox{
            \begin{tikzpicture}[scale=0.5]
            {
            \node [circle, draw, fill, inner sep =1pt, label = { 90: 1 }] (node_0_0) at (0,0) {};
            \node [circle, draw, fill, inner sep =1pt, label = { 90: 2 }] (node_1_0) at (1,0) {};
            \node [circle, draw, fill, inner sep =1pt, label = { 90: 3 }] (node_2_0) at (2,0) {};
            }

            {
            \node [circle, draw, fill, inner sep =1pt, label = { 90: 4 }] (node_3_0) at (3,0) {};
            \node [circle, draw, fill, inner sep =1pt, label = { 90: 5 }] (node_4_0) at (4,0) {};
            }
            
            {
            \node [circle, draw, fill, inner sep =1pt, label = { 90: 6 }] (node_5_0) at (5,0) {};
            \node [circle, draw, fill, inner sep =1pt, label = { 90: 7 }] (node_6_0) at (6,0) {};
            \node [circle, draw, fill, inner sep =1pt, label = { 90: 8 }] (node_7_0) at (7,0) {};
            \node [circle, draw, fill, inner sep =1pt, label = { 90: 9 }] (node_8_0) at (8,0) {};
            }
            
            
            \node [circle, draw, fill, inner sep =1pt, label = { 90:  }] (node_1_-1) at (1,-1) {};

            \node [circle, draw, fill, inner sep =1pt, label = { 90:  } ] (node_3'5_-1) at (3.5,-1) {};
            \node [circle, draw, fill, inner sep =1pt, label = { 90: }] (node_6'5_-1) at (6.5,-1) {};

            \node [circle, draw, fill, inner sep =1pt, label = { 90: }] (node_5'0_-2) at (5.0,-2) {};
            
            \node [circle, draw, fill, inner sep =1pt, label = { 90:  }] (node_3'0_-3) at (3.0,-3) {};
            
            \draw[] (node_3'0_-3) -- (node_1_-1);
            \draw[] (node_3'0_-3) -- (node_5'0_-2);
            
            {
            \draw[] (node_1_-1) -- (node_0_0);
            \draw[] (node_1_-1) -- (node_1_0);
            \draw[] (node_1_-1) -- (node_2_0);
            }
            
            {
            \draw[] (node_5'0_-2) -- (node_3'5_-1);
            \draw[] (node_5'0_-2) -- (node_6'5_-1);
            }
            
            {
            \draw[] (node_3'5_-1) -- (node_3_0);
            \draw[] (node_3'5_-1) -- (node_4_0);
            }
            
            {
            \draw[] (node_6'5_-1) -- (node_5_0);
            \draw[] (node_6'5_-1) -- (node_6_0);
            \draw[] (node_6'5_-1) -- (node_7_0);
            \draw[] (node_6'5_-1) -- (node_8_0);
            }
            
            \end{tikzpicture}
        }}\xrightarrow{\varpi_3}
        \vcenter{
        \hbox{
            \begin{tikzpicture}[scale=0.6]

            {
            \node [circle, draw, fill, inner sep =1pt, label = { 90: $\{6\}$ }] (node_5_0) at (5,0) {};
            \node [circle, draw, fill, inner sep =1pt, label = { 90: $\{7\}$ }] (node_6_0) at (6,0) {};
            \node [circle, draw, fill, inner sep =1pt, label = { 90: $\{8\}$ }] (node_7_0) at (7,0) {};
            \node [circle, draw, fill, inner sep =1pt, label = { 90: $\{9\}$ }] (node_8_0) at (8,0) {};
            }
            
            
            \node [circle, draw, fill, inner sep =1pt, label = { 90:  $\{1,2,3\}$}] (node_1_-1) at (1,-1) {};

            \node [circle, draw, fill, inner sep =1pt, label = { 90:  $\{4,5\}$} ] (node_3'5_-1) at (3.5,-1) {};
            \node [circle, draw, fill, inner sep =1pt, label = { 90: }] (node_6'5_-1) at (6.5,-1) {};

            \node [circle, draw, fill, inner sep =1pt, label = { 90: }] (node_5'0_-2) at (5.0,-2) {};
            
            \node [circle, draw, fill, inner sep =1pt, label = { 90:  }] (node_3'0_-3) at (3.0,-3) {};
            
            \draw[] (node_3'0_-3) -- (node_1_-1);
            \draw[] (node_3'0_-3) -- (node_5'0_-2);

            {
            \draw[] (node_5'0_-2) -- (node_3'5_-1);
            \draw[] (node_5'0_-2) -- (node_6'5_-1);
            }

            {
            \draw[] (node_6'5_-1) -- (node_5_0);
            \draw[] (node_6'5_-1) -- (node_6_0);
            \draw[] (node_6'5_-1) -- (node_7_0);
            \draw[] (node_6'5_-1) -- (node_8_0);
            }
            
            \end{tikzpicture}
        }}\xrightarrow{\varpi_{7,3}}
        \vcenter{
        \hbox{
            \begin{tikzpicture}[scale=0.6]

            
            \node [circle, draw, fill, inner sep =1pt, label = { 90:  $\{1,2,3\}$}] (node_1_-1) at (1,-1) {};

            \node [circle, draw, fill, inner sep =1pt, label = { 90: $\{4,5,6,7,8,9\}$}] (node_5'0_-2) at (4.7,-2) {};
            
            \node [circle, draw, fill, inner sep =1pt, label = { 90:  }] (node_3'0_-3) at (3.0,-3) {};
            
            \draw[] (node_3'0_-3) -- (node_1_-1);
            \draw[] (node_3'0_-3) -- (node_5'0_-2);
            
            \end{tikzpicture}
        }}
        $$
        \caption{Example of tree compression. Observe that $\varpi_7 = \varpi_{7,3}\circ \varpi_3$.}
        \label{fig:ex-of-tree-compressing-map}
    \end{figure}
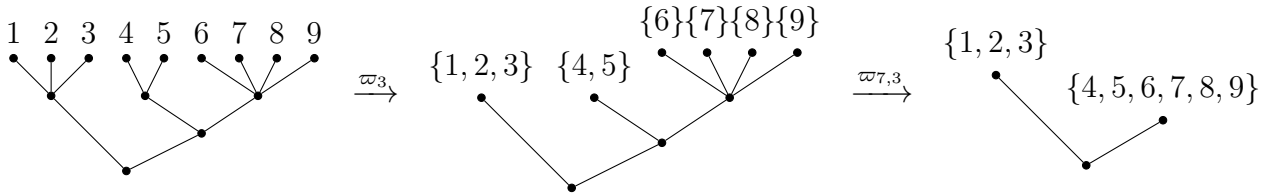

    \begin{rmk}
        If $e \in E_{\Int}(\tau)$ is an internal edge whose child vertex $v$ is flippable, there are in general two ways two contract $e$ to obtain a smaller tree $\tau'$; depending on whether or not we reverse the children of $v$ prior to performing the contraction. We will say that both possible trees $\tau'$ are obtained ``by contracting an edge of $\tau$''.
    \end{rmk}

    \subsection{Curves and moduli spaces} \label{subsec:curves-moduli-spaces}

    An \emph{$(n+1)$-pointed prestable curve of genus zero} is a reduced, connected algebraic curve $C$ of arithmetic genus $0$, together with an $n$-tuple $x_{\bullet} = (x_1, \ldots, x_{n+1})$ of smooth \textit{marked points} of $C$. The genus condition is equivalent to every irreducible component of $C$ being isomorphic to $\mathbb{P}^1$, with the components attached along simple nodes to form a tree structure. We say a point on $C$ is \textit{special} if it is a marked point or a node. Such a curve is \emph{stable} if all the marked points are distinct and every irreducible component contains at least $3$ special points. We write $\DMumford{n+1}$ for the moduli space of stable $(n+1)$-pointed curves of genus $0$.

    A stable curve is \emph{real}, $(C; x_\bullet) \in \DMumford{n+1}(\R)$, if all its marked points and nodes are at real points of $\Pj^1$. Its \emph{standard tree}\footnote{In the literature, this tree is called the `dual tree', because its edges correspond to the special points of $(C; x_\bullet)$. We instead reserve the term `dual' for the dual cell structure.} $\tau^\mathrm{std}(C; x_\bullet) \in \StRtree([n])$ is defined as follows. It has an internal vertex $v$ for each irreducible component $C_v \subseteq C$ and an internal edge $(v, w)$ when $C_v$ and $C_w$ are attached at a node; and it has leaves corresponding to the marked points $x_1, \ldots, x_n$ and edges $(v, i)$ when $x_i \in C_v$. We root the tree at the component containing $x_{n+1}$. Finally, we order the children of each $v \in V_{\Int}(\tau)$, up to a flip at each $v$, as follows.
    \begin{defn}\label{def:rooted-coordinates}
        We define \emph{rooted coordinates} on $(C; x_\bullet)$ as follows. If $v$ is not the root, we define $q(C_v) \in C_v$ to be the node corresponding to the unique edge from $v$ back towards the root. If $v$ is the root, we define $q(C_v) := x_{n+1}$. Then, on each $C_v$, we choose any coordinates such that $q(C_v) = \infty$.\footnote{It is important to note that this choice of coordinates applies on $C_v$, and not on the other component containing $q(C_v)$.}
    \end{defn}
    
    The special points on $C_v$ other than $x_{n+1}$ or $q(C_v)$ thus correspond to the children of $v$. They inherit a well-defined ordering along $\R\Pj^1$, up to a flip, by taking any choice of rooted coordinates.
    
    For each $\tau\in \StRtree([n])$, the \emph{standard cell} $X_{\tau}\subseteq \overline{M_{0,n+1}}(\R)$ consists of the stable curves with $\tau$ as the standard tree. See Figure~\ref{fig:stable-curve-with-dual-tree-ex} for an example. We note that
    \begin{equation}\label{eqn:dimn-formula-for-stdcell-Xtau}
    \dim X_\tau = \sum_{v \in V_{\Int}(\tau)} (c(v) - 2).
    \end{equation}
    It is well-known that $X_\tau$ is homeomorphic to a product of associahedra; see e.g. \cite{devadoss1999}.

    Hassett~\cite{hassett2003} constructed a weighted variant of this space, denoted $\overline{M_{0,\mcA}}$, where 
    $$\mcA=(a_1, \hdots, a_{n+1}) \in (0,1]^{n+1},$$ is a weight vector and satisfies $\sum a_i>2$. The $i$-th marked point is assigned the weight $a_i$ and, roughly, marked points are allowed to coincide precisely when their weights sum to $\leq 1$. Formally, a prestable $(n+1)$-pointed curve $(C;x_{{\bullet}})$ is \emph{$\mcA$-stable} if for each smooth point $p\in C$, we have $\sum_{x_i=p}a_i\le 1$, and for each irreducible component $C' \subseteq C$,
    \[
    \#\text{nodes of } C' + \sum_{x_i \in C'} a_i > 2.
    \]
    The \textit{Hassett space} $\overline{M_{0, \mcA}}$ is the moduli space of $\mcA$-stable curves up to equivalence. If $\mcB\le \mcA$, there is a reduction (blowdown) map $\rho_{\mcA, \mcB}: \overline{M_{0,\mcA}}\to \overline{M_{0,\mcB}}$, where $\rho_{\mcA, \mcB}(C; x_\bullet)$ is formed by successively contracting components of $C$ that are $\mcB$-unstable.

    \begin{figure}[h]
        \centering
        $\vcenter{\hbox{
        \begin{tikzpicture}[xscale=0.75, yscale= 1]
            \node [circle, draw, fill, inner sep =1pt, label = { 90: 1 }] (node_0_0) at (0,0) {};
            \node [circle, draw, fill, inner sep =1pt, label = { 90: 2 }] (node_1_0) at (1,0) {};
            \node [circle, draw, fill, inner sep =1pt, label = { 90: 3 }] (node_2_0) at (2,0) {};
            \node [circle, draw, fill, inner sep =1pt, label = { 90: 4 }] (node_3_0) at (3,0) {};
            \node [circle, draw, fill, inner sep =1pt, label = { 90: 5 }] (node_4_0) at (4,0) {};
            \node [circle, draw, fill, inner sep =1pt, label = { 90: 6 }] (node_5_0) at (5,0) {};
            \node [circle, draw, fill, inner sep =1pt, label = { 90: 7 }] (node_6_0) at (6,0) {};
            \node [circle, draw, fill, inner sep =1pt, label = { 90: 8 }] (node_7_0) at (7,0) {};
            \node [circle, draw, fill, inner sep =1pt, label = { 90: 9 }] (node_8_0) at (8,0) {};
            \node [circle, draw, fill, inner sep =1pt, label = { 90: 10 }] (node_9_0) at (9,0) {};
            \node [circle, draw, fill, inner sep =1pt, ] (node_1_-1) at (1,-1) {};
            \node [circle, draw, fill, inner sep =1pt, ] (node_6_-1) at (6,-1) {};
            \node [circle, draw, fill, inner sep =1pt, ] (node_6_-2) at (5.5,-2) {};
            \node [circle, draw, red, fill, inner sep =1pt, minimum size=1.5mm] (node_4'75_-3) at (4.75,-3) {};
            \draw[blue,thick] (node_4'75_-3) -- (node_1_-1);
            \draw[] (node_4'75_-3) -- (node_3_0);
            \draw[blue,thick] (node_4'75_-3) -- (node_6_-2);
            \draw[] (node_4'75_-3) -- (node_9_0);
            \draw[] (node_1_-1) -- (node_0_0);
            \draw[] (node_1_-1) -- (node_1_0);
            \draw[] (node_1_-1) -- (node_2_0);
            \draw[] (node_6_-2) -- (node_4_0);
            \draw[blue, thick] (node_6_-2) -- (node_6_-1);
            \draw[] (node_6_-2) -- (node_8_0);
            \draw[] (node_6_-1) -- (node_5_0);
            \draw[] (node_6_-1) -- (node_6_0);
            \draw[] (node_6_-1) -- (node_7_0);
        \end{tikzpicture}
        }}
        \vcenter{\hbox{
            \begin{tikzpicture}[scale=1]
            
                \def\cx{0}
                \def\cy{0}
                \node [circle, draw, minimum size=2cm] (c1) at (\cx,\cy) {};
    
                \def\labelpos{90};
                \markedpoint{c1}{\labelpos}{}{4};
                \draw[thick] (\cx, \cy) -- ({\cx+cos(\labelpos)},{\cy+sin(\labelpos)}); 
                \def\labelpos{0};
                \markedpoint{c1}{\labelpos}{}{10};
                \draw[thick] (\cx, \cy) -- ({\cx+cos(\labelpos)},{\cy+sin(\labelpos)});
                \def\labelpos{225};
                \markedpoint{c1}{\labelpos}{}{$n+1$};
                \draw[dotted] (\cx, \cy) -- ({\cx+cos(\labelpos)},{\cy+sin(\labelpos)});
            
                \def\cxprev{0}
                \def\cyprev{0}

                \def\cx{sqrt(2)}
                \def\cy{sqrt(2)}
                \node [circle, draw, minimum size=2cm] (c2) at ({\cx}, {\cy}) {};
                \def\labelpos{90};
                \markedpoint{c2}{\labelpos}{}{5};
                \draw[thick] ({\cx}, {\cy}) -- ({\cx+cos(\labelpos)},{\cy+sin(\labelpos)});
                \def\labelpos{330};
                \markedpoint{c2}{\labelpos}{}{9};
                \draw[thick] ({\cx}, {\cy}) -- ({\cx+cos(\labelpos)},{\cy+sin(\labelpos)});

                \draw[blue, thick] ({\cx},{\cy}) -- ({\cxprev}, {\cyprev});
            
                \def\cxprev{sqrt(2)}
                \def\cyprev{sqrt(2)}
                      
                \def\cx{2*sqrt(2)}
                \def\cy{2*sqrt(2)}
                \node [circle, draw, minimum size=2cm] (c3) at ({\cx}, {\cy}) {};
                \def\labelpos{90};
                \markedpoint{c3}{\labelpos}{}{6};
                \draw[thick] ({\cx}, {\cy}) -- ({\cx+cos(\labelpos)},{\cy+sin(\labelpos)});
                \def\labelpos{330};
                \markedpoint{c3}{\labelpos}{}{7};
                \draw[thick] ({\cx}, {\cy}) -- ({\cx+cos(\labelpos)},{\cy+sin(\labelpos)});
                \def\labelpos{300};
                \markedpoint{c3}{\labelpos}{}{8};
                \draw[thick] ({\cx}, {\cy}) -- ({\cx+cos(\labelpos)},{\cy+sin(\labelpos)});

                \draw[blue, thick] ({\cx},{\cy}) -- ({\cxprev}, {\cyprev});

                \def\cxprev{0}
                \def\cyprev{0}
                
                \def\cx{-sqrt(2)}
                \def\cy{sqrt(2)}
                \node [circle, draw, minimum size=2cm] (c4) at ({\cx}, {\cy}) {};
                \def\labelpos{180};
                \markedpoint{c4}{\labelpos}{}{1};
                \draw[thick] ({\cx}, {\cy}) -- ({\cx+cos(\labelpos)},{\cy+sin(\labelpos)});
                \def\labelpos{150};
                \markedpoint{c4}{\labelpos}{}{2};
                \draw[thick] ({\cx}, {\cy}) -- ({\cx+cos(\labelpos)},{\cy+sin(\labelpos)});
                \def\labelpos{90};
                \markedpoint{c4}{\labelpos}{}{3};
                \draw[thick] ({\cx}, {\cy}) -- ({\cx+cos(\labelpos)},{\cy+sin(\labelpos)});

                \draw[blue, thick] ({\cx},{\cy}) -- ({\cxprev}, {\cyprev});

                \node[red, 
                     draw, thick, 
                     fill, 
                     inner sep = 0pt, 
                     minimum size=1.5mm, 
                     circle, 
                     ] at (0, 0) 
                      {};
                 
            \end{tikzpicture}
        }}$
        \caption{($n=10$) Left: A stable tree $\tau\in \StRtree([10])$ with the root colored red and internal edges coloured blue. Right: An element of the standard cell $X_{\tau}\subseteq \overline{M_{0,11}}(\R)$ with $\tau$ overlayed. Here, $\dim X_{\tau}=(4-3)+(5-3)+(4-3)+(4-3)=5$.}
        \label{fig:stable-curve-with-dual-tree-ex}
    \end{figure}
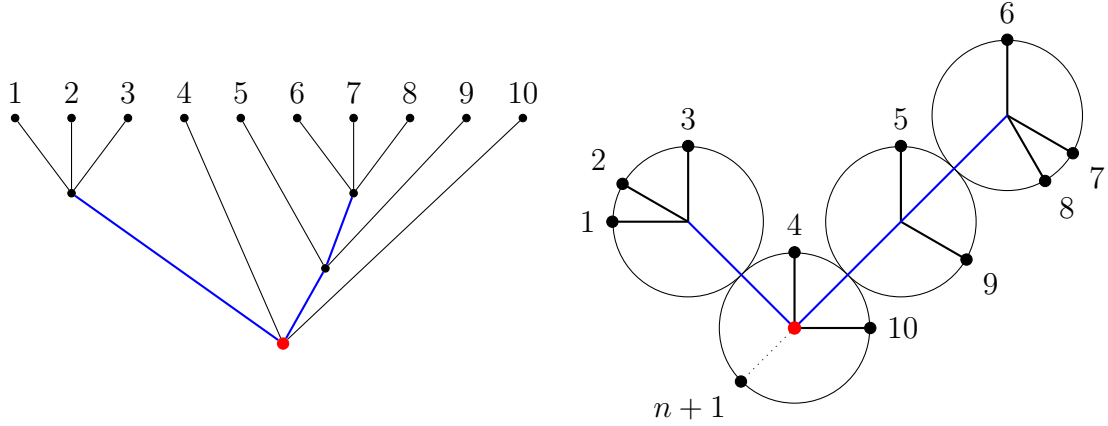

    We will also make use of the double cover $\overline{\DC{M}_{0,n+1}}(\R) \to \DMumford{n+1}(\R)$ that parametrizes real stable curves in which the component of the $(n+1)$-st marked point is given an orientation. (See e.g. Kapranov~\cite{kapranov1993}, where it is denoted as $\widetilde{S}^{n-3}$.) It can be thought of as the space of unit vectors in the real line bundle $\mathbb{L}_{n+1} \to \DMumford{n+1}(\R)$ parametrizing the cotangent space to $x_{n+1}$ on $C$. If $a_{n+1}=1$, a similar description gives a double cover $\overline{\DC{M}_{0,\mcA}}(\R)$ of $\overline{M_{0,\mcA}}(\R)$. These covers are compatible with the reduction maps $\rho_{\mcA, \mcB}$; these lift to maps $\tilde{\rho}_{\mcA, \mcB}: \overline{\DC{M}_{0,\mcA}}(\R) \to \overline{\DC{M}_{0,\mcB}}(\R)$.

    \subsubsection{Symmetric weights}

    We will focus on $(n+1)$-pointed Hassett spaces with $S_n$-symmetric weight vector $\mcA=(a_1,\hdots, a_n, a_{n+1} )\in (0,1]^{n+1}$, that is,
    \begin{equation}\label{eqn:assumptions-on-the-weight-data}
        a_1=a_2=\hdots=a_n \text{ and } a_{n+1}=1.
    \end{equation}
    The subsets $S\subseteq [n+1]$ that satisfy $\sum_{i\in S}a_i\le 1$ form an abstract simplicial complex $\mcK$ on $[n+1]$. Up to isomorphism, $\overline{M_{0, \mcA}}$ is completely determined by $\mcK$. Consequently, we may assume our weight vector is of the form
    \[\mcA=\mcA(a):=\mcA_{n+1}(a):=(\underbrace{\tfrac{1}{a}, \hdots, \tfrac{1}{a}}_{n}, 1)\]
    where $a\in \{ 1,2,\hdots, n-1 \}$. We say a curve is $a$-stable if it is $\mcA(a)$-stable. If $a=1, 2$, the moduli space is isomorphic to $\DMumford{n+1}$. If $a=n-1$, it is isomorphic to projective space. More explicitly, an $(n-1)$-stable curve is automatically irreducible, and we have an isomorphism $\DMumford{\mcA(n-1)} \xrightarrow{\ \sim\ } \Pj^{n-2}$ given by 
    $$(\Pj^1; x_{\bullet})\mapsto (x_2:x_3:\ldots: x_n)$$
    where the coordinates are chosen such that $x_1=0$ and $x_{n+1}=\infty$. For the remainder of this paper, we write $\overline{M_{0,\mcA_{n+1}(n-1)}} = \Pj^{n-2}$ under this isomorphism.

    Finally, for $(C; x_\bullet) \in \DMumford{\mcA(a)}(\R)$, we define its standard tree $\tau^{\mathrm{std}}(C; x_\bullet)$ analogously to the unweighted case, but where, for each $p \in C$ with $\{i : x_i = p\}$ nonempty, we have a leaf labeled by that subset of $[n]$. We then have $\tau^{\mathrm{std}}(C; x_\bullet) \in \StRtree(a; [n])$. The reduction map $\DMumford{n+1}(\R) \to \DMumford{\mcA(a)}(\R)$ then corresponds to the compression map $\varpi_a : \StRtree([n]) \to \StRtree(a; [n])$, and similarly for $\rho_{\mcA(a), \mcA(b)}$ and $\varpi_{a,b}$.

    \subsection{$G$-equivariant Fundamental Group}\label{subsec:G-eq-pi1}

    We review the notion of $G$-equivariant fundamental group and its relation to the ordinary fundamental group, for a space with a finite group action.

    \begin{defn}(\cite[Section 11.1]{ilinKLPR2023})\label{def:equivariant-fundamental-group}
        Let $G$ be a finite group acting on a path connected and locally simply connected space $X$, and fix a basepoint $x\in X$. We define the {\bf $G$-equivariant fundamental group} (relative to $x$) to be the group $\pi_1^G(X,x)$ of pairs $(g, \gamma)$, where $g\in G$ and $\gamma$ is a homotopy class of paths from $x$ to $gx$, equipped with the operation
        $$(g_1, \gamma_1)\cdot (g_2, \gamma_2) = (g_1g_2, \gamma_1 * (g_1\circ \gamma_2))$$
        where $*$ denotes concatenation of paths.
    \end{defn}
    Thus $\pi_1^G$ defines a functor from the category of pointed, locally simply connected $G$-spaces to groups. Given a pointed map $\phi : (X, x) \to (Y, y)$ of $G$-spaces, we denote the induced map of $G$-equivariant fundamental groups by $\phi_*^G$. As with the ordinary fundamental group, a path from $x$ to $x' \in X$ induces an isomorphism $\pi_1^G(X, x) \cong \pi_1^G(X, x')$, which depends only on the homotopy class of the path. We will therefore often suppress the basepoint in our notation and denote the group as $\pi_1^G(X)$. 
    
    The relationship between $\pi_1^G$ and $\pi_1$ is captured by the following \SES:
    \begin{equation}\label{eqn:ses_G_X}
        1\to \pi_1(X)\to \pi_1^G(X)\to G\to 1,
    \end{equation}
    where $\gamma\in \pi_1(X)$ maps to $(1,\gamma)\in \pi_1^G(X)$ and $(g,\gamma)\in \pi_1^G(X)$ maps to $g\in G$.

    Assume that $X$ is a connected CW-complex with an action of $G$ for which the $2$-skeleton $X^{(2)}$ is $G$-stable, that is, $gX^{(2)} \subseteq X^{(2)}$ for all $g \in G$. Then, as is the case for ordinary fundamental groups, we may essentially replace $X$ by its $2$-skeleton:

    \begin{thm}
        Let $X$ be a connected CW-complex. Let $G$ be a finite group acting on $X$. 
        Let $X^{(k)}$ denote the $k$-skeleton, and suppose that $X^{(2)}$ is $G$-stable. Then  $\pi_1^G(X)\cong \pi_1^G(X^{(2)})$.
    \end{thm}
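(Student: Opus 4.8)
The plan is to mimic the classical proof that $\pi_1(X) \cong \pi_1(X^{(2)})$ via cellular approximation, keeping track of the $G$-action throughout. Recall that elements of $\pi_1^G(X,x)$ are pairs $(g,\gamma)$ with $\gamma$ a homotopy class of paths from $x$ to $gx$, and that the inclusion $\iota : X^{(2)} \hookrightarrow X$ (which is $G$-equivariant since $X^{(2)}$ is $G$-stable, and which we may assume carries the basepoint $x$, as a $0$-cell, to itself) induces a homomorphism $\iota_*^G : \pi_1^G(X^{(2)}) \to \pi_1^G(X)$. I will show this map is an isomorphism by checking surjectivity and injectivity on the path data, using that $g$ is determined and preserved on both sides.

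\emph{Surjectivity.} Given $(g,\gamma) \in \pi_1^G(X,x)$, the path $\gamma$ is a map $[0,1]\to X$ with endpoints $x, gx \in X^{(2)}$. By cellular approximation (rel endpoints), $\gamma$ is homotopic in $X$, fixing endpoints, to a path $\gamma'$ whose image lies in $X^{(1)} \subseteq X^{(2)}$. Then $(g,\gamma')$ lies in $\pi_1^G(X^{(2)},x)$ and $\iota_*^G(g,\gamma') = (g,\gamma)$. Here I only need that $\gamma$ and $\gamma'$ are homotopic in $X$ — no equivariance of the homotopy is required, because the group element $g$ is recorded separately and is unchanged.

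\emph{Injectivity.} Suppose $(g,\gamma) \in \pi_1^G(X^{(2)},x)$ maps to the identity $(1, \mathrm{const}_x)$ in $\pi_1^G(X)$. Then $g = 1$ and $\gamma$, viewed in $X$, is null-homotopic rel endpoints. So $\gamma$ bounds a disk $D^2 \to X$. Again by cellular approximation, this disk is homotopic rel boundary to a map into $X^{(2)}$, which exhibits $\gamma$ as null-homotopic \emph{inside} $X^{(2)}$. Hence $(1,\gamma)$ is already trivial in $\pi_1^G(X^{(2)})$, proving $\ker \iota_*^G$ is trivial.

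\emph{What is actually at stake.} The only real content beyond the classical argument is bookkeeping: one must check that $\iota_*^G$ is a well-defined group homomorphism (immediate from functoriality of $\pi_1^G$, stated in the excerpt, together with $G$-equivariance of $\iota$, which holds because $X^{(2)}$ is $G$-stable); and one must be slightly careful that the basepoint $x$ can be chosen to be a $0$-cell, so that it lies in $X^{(2)}$ and the restriction of the $G$-action makes sense — if $gx \ne x$ for some $g$, that is fine, since $gx$ is still a $0$-cell in $X^{(2)}$, and all the cellular-approximation steps have endpoints in $X^{(0)} \subseteq X^{(2)}$. The cellular approximation inputs (paths land in the $1$-skeleton up to homotopy; disks land in the $2$-skeleton up to homotopy rel boundary) are standard and require no modification in the presence of the group action, precisely because the $G$-action only enters $\pi_1^G$ through the discrete label $g \in G$. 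I do not anticipate a genuine obstacle; the main point worth stating carefully in the write-up is the compatibility of the basepoint choice with the cell structure and the $G$-stability of $X^{(2)}$.
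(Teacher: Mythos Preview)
Your proof is correct. The paper takes a slightly more structural route: it invokes the short exact sequence
\[
1 \to \pi_1(Y) \to \pi_1^G(Y) \to G \to 1
\]
for both $Y = X^{(2)}$ and $Y = X$, obtains a commutative ladder from the inclusion $\iota$, notes that the leftmost vertical map $\pi_1(X^{(2)}) \to \pi_1(X)$ is an isomorphism by cellular approximation and the rightmost is the identity on $G$, and then concludes by the short five lemma. Your argument is essentially an unpacked version of this: your injectivity step reduces to $g=1$ (i.e., lands in the $\pi_1$ column) and then applies cellular approximation for disks, while surjectivity is cellular approximation for paths; these are exactly the two nontrivial diagram chases in the five lemma. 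The paper's packaging is shorter and makes the role of the extension by $G$ transparent; yours has the advantage of not needing to quote the short exact sequence or the five lemma, and of making explicit why no equivariance of the homotopies is required.
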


    \begin{proof}
        Consider the inclusion $\iota:X^{(2)}\into X$. This yields the following commutative diagram
            \begin{equation}
                \begin{tikzcd}
                1 \arrow[r] & \pi_1(X^{(2)}) \arrow[r] \arrow[d, "\iota^*"'] & \pi_1^G(X^{(2)}) \arrow[r] \arrow[d, "\iota^*_G"'] & G \arrow[r] \arrow[d, Rightarrow, no head] & 1 \\
                1 \arrow[r] & \pi_1(X) \arrow[r]                             & \pi_1^G(X) \arrow[r]                               & G \arrow[r]                                & 1
                \end{tikzcd}
            \end{equation}
        where the rows are exact. Then $\iota^*$ is an isomorphism by the cellular approximation theorem. We conclude, by the short five lemma, that $\iota^*_G$ is an isomorphism.
    \end{proof}

    We state an analog, for equivariant fundamental groups, of the fact that a CW complex with one $0$-cell has a fundamental group with one generator for each $1$-cell and one generator for each $2$-cell.

    We replace the assumption ``$X$ has only one $0$-cell'' by ``the action of $G$ on $X$ is simply transitive on the $0$-cells'', i.e. for $0$-cells $x$ and $y$, there is a unique element $g\in G$ such that $y=gx$. We assume moreover that $G$ acts on the $1$-cells and $2$-cells, and that each $1$-cell connects two 0-cells, $x$ and $gx$, via an element $g \in G$ such that $g^2=1$. We may then assign, without ambiguity, the label $g$ to that 1-cell. 
    
    We obtain the following presentation of the $G$-equivariant fundamental group.

    \begin{thm}\label{thm:pres-pi1GX-simply-transitive}
        Let $X$ be a connected CW-complex with an action of a finite group $G$. Suppose in addition that $G$ acts on the $k$-cells of $X$ for $k \leq 2$, and that the action on the $0$-cells is simply transitive. Furthermore, suppose that each 1-cell connects 0-cells $x$ and $gx$ with $g^2=1$.
        
        Fix a base point $x_0\in X^{(0)}$. Then $\pi_1^G(X, x_0)$ has the presentation
        \[
        \pi_1^G(X, x_0) \cong \langle S | R \cup \{s^2: s\in S\}\rangle
        \]
        where $S$ is the set of $1$-cells incident to $x_0$ (ie. the set of distinct labels of the 1-cells) and $R$ is the set of words obtained from reading the boundaries of the $2$-cells containing $x_0$. 
    \end{thm}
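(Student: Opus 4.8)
The plan is to reduce to the non-equivariant case via the short exact sequence \eqref{eqn:ses_G_X} and a standard presentation argument. I would first replace $X$ by a $G$-CW structure on its universal cover's base, but more directly, I would work with the following concrete model: fix the base point $x_0$, and for each other $0$-cell $y$ let $g_y \in G$ be the unique element with $y = g_y x_0$ (using simple transitivity). Choose, for each $y$, the path $\eta_y$ which is the image under $g_y$ of the constant path? — no; rather, I would build an explicit maximal tree in the $1$-skeleton and transport along it. The cleanest route: since $G$ acts simply transitively on $0$-cells, the $0$-skeleton is a torsor under $G$, and I can identify $\pi_1^G(X^{(1)}, x_0)$ directly. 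A loop-with-group-element $(g, \gamma)$ where $\gamma$ is an edge path from $x_0$ to $g x_0$ decomposes as a concatenation of edges; each edge from a vertex $v$ to $hv$ (for the appropriate involution $h$ labelling it, up to translating by $g_v$) can be pushed back to $x_0$ by acting by $g_v^{-1}$, landing on an edge incident to $x_0$ labelled by some $s \in S$ with $s^2 = 1$. This shows $(g,\gamma)$ is a product of pairs $(s, e_s)$ for $s \in S$ (where $e_s$ is the chosen $1$-cell with label $s$), so these elements generate $\pi_1^G(X^{(1)}, x_0)$.

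Next I would identify the relations. By the previous theorem (\cite[reduction to $2$-skeleton]{}, i.e. the theorem stating $\pi_1^G(X) \cong \pi_1^G(X^{(2)})$) it suffices to work with $X^{(2)}$. The relations $s^2 = 1$ hold because the $1$-cell $e_s$ runs from $x_0$ to $sx_0$ and back along itself-translated-by-$s$, i.e. $(s,e_s)^2 = (s^2, e_s * (s\circ e_s)) = (1, e_s * (s\circ e_s))$, and $e_s * (s \circ e_s)$ is the boundary of... no — it is null-homotopic precisely because traversing $e_s$ then its $s$-translate returns along the same arc; this is the defining feature that makes $s$ a well-defined label. Then each $2$-cell $\sigma$, together with its $G$-orbit, contributes one relation: its attaching map, read as an edge path, translates (by acting by an appropriate $g \in G$ to move a chosen vertex of $\partial\sigma$ to $x_0$) to a word in the generators $s \in S$; setting this word to $1$ in $\pi_1^G$ gives the relation in $R$. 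Conversely, by van Kampen-type reasoning applied to $X^{(2)} = X^{(1)} \cup (\text{2-cells})$ — or more precisely by the $G$-equivariant analog, which follows formally from \eqref{eqn:ses_G_X} and ordinary van Kampen on a $G$-stable filtration — these are all the relations: $\pi_1(X^{(2)})$ is generated over $\pi_1(X^{(1)})$ by the boundaries of $2$-cells, and combining with the $G$-action one gets exactly $\langle S \mid R \cup \{s^2\}\rangle$.

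The cleanest way to organize the argument and avoid ad hoc path-chasing is: (1) prove the $1$-skeleton case $\pi_1^G(X^{(1)}, x_0) \cong \langle S \mid \{s^2 : s \in S\}\rangle$ as a free product of copies of $\Z/2\Z$, by exhibiting the isomorphism explicitly using the torsor structure on $X^{(0)}$; (2) invoke the general fact that attaching a $G$-orbit of $2$-cells along a loop $w$ kills the normal closure of the corresponding word — this is where I would cite or re-derive a $G$-equivariant van Kampen theorem, using the SES \eqref{eqn:ses_G_X} to reduce the surjectivity/injectivity claims to statements about $\pi_1$ and $G$ separately; (3) combine. The main obstacle is step (2): making the $G$-equivariant van Kampen / cell-attachment statement precise and rigorous. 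I expect to handle it by the following diagram chase: form the commutative ladder with rows \eqref{eqn:ses_G_X} for $X^{(1)}$ and for $X^{(2)}$, with identity on the $G$ column; the left vertical map $\pi_1(X^{(1)}) \to \pi_1(X^{(2)})$ is surjective with kernel normally generated by the (translates of the) $2$-cell boundaries by ordinary CW theory, and since the $G$-column map is the identity, a short diagram chase shows $\pi_1^G(X^{(1)}) \to \pi_1^G(X^{(2)})$ is surjective with the same kernel, described now in $\pi_1^G(X^{(1)}) = \langle S \mid s^2\rangle$ by the words $R$. This yields the presentation.
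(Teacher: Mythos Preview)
Your approach is correct but genuinely different from the paper's. The paper invokes Lemma~\ref{lem:ses_G_X} to identify $\pi_1^G(X)$ with the group $A$ of lifts of the $G$-action to the universal cover $\tilde X$, then exhibits a covering map $\tilde X \to Y$ where $Y$ is the Cayley $2$-complex of $H=\langle S\mid R\cup\{s^2\}\rangle$; since $\tilde X$ is simply connected, $H$ is its deck group and hence $H\cong A$. Your route instead stays downstairs: you compute $\pi_1^G(X^{(1)})\cong\langle S\mid s^2\rangle$ directly from the torsor structure on $X^{(0)}$, then use the ladder of short exact sequences~\eqref{eqn:ses_G_X} for $X^{(1)}$ and $X^{(2)}$ to identify the kernel of $\pi_1^G(X^{(1)})\twoheadrightarrow\pi_1^G(X^{(2)})$ with the normal closure of $R$.

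Two comments on your version. First, your step~(1) needs a bit more than you indicate: showing that $\langle S\mid s^2\rangle\to\pi_1^G(X^{(1)})$ is \emph{injective} amounts to checking that the only null-homotopic edge-words in the graph $X^{(1)}$ are those reducible by the backtrack relation $s\cdot s=1$, and this in turn uses the (implicit) hypothesis that $s$ stabilizes its own $1$-cell $e_s$---otherwise $(s,e_s)^2$ is a nontrivial loop. Second, your step~(2) is fine once you say explicitly why the normal closure in $\pi_1^G(X^{(1)})$ of the boundaries of $2$-cells \emph{at $x_0$} already contains the boundaries of all $2$-cells: this is because conjugation by $\pi_1^G(X^{(1)})$ realizes the $G$-action on $\pi_1(X^{(1)})$, and $G$ permutes the $2$-cells transitively within each orbit. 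The paper's Cayley-complex argument packages both of these points at once, at the cost of passing through the universal cover; your argument is more elementary but requires spelling them out.
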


    We follow the argument given in the proof of \cite[Thm 4.7.2]{davisJS2003}, which makes use of the following alternate description of $\pi_1^G$ in terms of lifts.

    Each element $g\in G$ defines a map $\theta_g: X\to X$ sending $x\mapsto gx$. Let $p: \tilde{X}\to X$ be the universal cover. A lift of $\theta_g$ to $\tilde{X}$ is a (continuous) map $\tilde{\theta_g}:\tilde{X}\to \tilde{X}$ such that the diagram below commutes.
            \begin{equation}
                \begin{tikzcd}
                    \tilde{X} \arrow[d, "p"'] \arrow[r, "\tilde{\theta_g}"] & \tilde{X} \arrow[d, "p"] \\
                    X \arrow[r, "{\theta_g}"]                               & X                     
                \end{tikzcd}
            \end{equation}
    We refer to such a lift as a lift of the $G$-action on $X$ to $\tilde{X}$. The set of all lifts of the $G$-action on $X$ form a group~\cite[Section 9]{bredon1972} under composition. There is a natural surjective homomophism from this group of lifts to $G$, whose kernel is $\pi_1(X)$.

            \begin{lem}\label{lem:ses_G_X}
                Let $X$ be as in Definition \ref{def:equivariant-fundamental-group}.
                There is an isomorphism between $\pi_1^G(X)$ and group of lifts of the $G$-action on $X$ to its universal cover.
            \end{lem}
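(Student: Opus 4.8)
The plan is to write down an explicit map $\Phi\colon \pi_1^G(X,x)\to \mathcal{L}$, where $\mathcal{L}$ denotes the group of lifts of the $G$-action to the universal cover $p\colon\tilde X\to X$, and then to recognize $\Phi$ as the middle vertical arrow of a morphism between the two short exact sequences
\[
1\to\pi_1(X)\to\pi_1^G(X)\to G\to1
\qquad\text{and}\qquad
1\to\pi_1(X)\to\mathcal{L}\to G\to1,
\]
so that the isomorphism follows from the short five lemma. The first sequence is \eqref{eqn:ses_G_X} and the second is the one recalled just before the statement.

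To construct $\Phi$, first fix a point $\tilde x\in p^{-1}(x)$. For each $g\in G$, since $\tilde X$ is simply connected and locally path connected (the latter inherited from the hypothesis that $X$ is locally simply connected), the composite $\theta_g\circ p\colon\tilde X\to X$ lifts through $p$, and any such lift is determined by its value at $\tilde x$, which may be prescribed to be an arbitrary point of $p^{-1}(gx)$. Given $(g,\gamma)\in\pi_1^G(X,x)$, I would lift $\gamma$ to a path $\tilde\gamma$ in $\tilde X$ with $\tilde\gamma(0)=\tilde x$ and set $\Phi(g,\gamma)$ to be the unique lift $\tilde\theta_g$ of $\theta_g$ with $\tilde\theta_g(\tilde x)=\tilde\gamma(1)$. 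By the homotopy lifting property, replacing $\gamma$ by a path homotopic to it rel endpoints does not move $\tilde\gamma(1)$, so $\Phi$ is well defined on homotopy classes, and by construction $\Phi(g,\gamma)$ lies over $g$ under the surjection $\mathcal{L}\to G$.

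Next I would check that $\Phi$ is a homomorphism. Both $\Phi(g_1,\gamma_1)\circ\Phi(g_2,\gamma_2)$ and $\Phi\big((g_1,\gamma_1)\cdot(g_2,\gamma_2)\big)=\Phi\big(g_1g_2,\ \gamma_1*(g_1\circ\gamma_2)\big)$ are lifts of $\theta_{g_1}\circ\theta_{g_2}=\theta_{g_1g_2}$, so by uniqueness it suffices to compare their values at $\tilde x$. The crux is the observation that $\tilde\theta_{g_1}:=\Phi(g_1,\gamma_1)$ carries the lifted path $\tilde\gamma_2$ to a path that starts at $\tilde\theta_{g_1}(\tilde x)=\tilde\gamma_1(1)$ and projects to $\theta_{g_1}\circ p\circ\tilde\gamma_2=g_1\circ\gamma_2$; hence $\tilde\theta_{g_1}\circ\tilde\gamma_2$ is the lift of $g_1\circ\gamma_2$ beginning at $\tilde\gamma_1(1)$, and its endpoint $\tilde\theta_{g_1}(\tilde\gamma_2(1))$ is exactly the endpoint of the lift of the concatenation $\gamma_1*(g_1\circ\gamma_2)$ beginning at $\tilde x$. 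That endpoint is by definition $\Phi\big(g_1g_2,\ \gamma_1*(g_1\circ\gamma_2)\big)(\tilde x)$, which gives the desired identity.

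Finally, restricting $\Phi$ to the subgroup $\{(1,\gamma)\}\cong\pi_1(X,x)$: here $\theta_1=\mathrm{id}_X$, so $\Phi(1,\gamma)$ is simply the deck transformation of $\tilde X$ sending $\tilde x$ to $\tilde\gamma(1)$, i.e.\ $\Phi$ restricts to the classical isomorphism $\pi_1(X,x)\xrightarrow{\sim}\mathrm{Deck}(\tilde X/X)=\ker(\mathcal{L}\to G)$ determined by $\tilde x$. Combined with the fact that $\Phi$ covers the identity on $G$, this yields the required commutative ladder between the two short exact sequences with identity maps on the ends, so the short five lemma shows $\Phi$ is an isomorphism. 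The main obstacle is the homomorphism check: one must take care to match the order of composition of lifts with the twisted concatenation $\gamma_1*(g_1\circ\gamma_2)$ appearing in the product on $\pi_1^G$, and to justify that a lift of $\theta_g$ to $\tilde X$ is pinned down by a single prescribed value (which uses that $\tilde X$ is locally path connected). Everything else is routine covering-space theory.
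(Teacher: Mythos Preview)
Your proof is correct. Note, however, that the paper does not actually prove this lemma: it is stated without proof, with only a citation to Bredon for the fact that the lifts form a group and that the kernel of $\mathcal{L}\to G$ is $\pi_1(X)$. Your argument supplies exactly the details one would want, and your use of the short five lemma is the natural route given that the paper has already written down both short exact sequences explicitly in the surrounding text.
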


    We now prove Theorem \ref{thm:pres-pi1GX-simply-transitive}.
    \begin{proof}
        \WLOG,\ assume $\dim X\le 2$. Let $p: \tilde{X}\to X$ be the universal cover. Then $\tilde{X}$ is a CW-complex whose $k$-cells are the connected components of the preimages by $p$ of the $k$-cells of $X$. Let $A$ be group of lifts of the $G$-action on $X$ to $\tilde{X}$. By Lemma~\ref{lem:ses_G_X}, $A\cong \pi_1^G(X)$.
        
        Fix 0-cells $\tilde{x}_0\in \tilde{X}$ and $x_0= p(\tilde{x}_0)\in X$. Identifying $S$ as a subset of $G$, we know that the 1-cells with $x_0$ as one endpoint connect to 0-cells of the form $sx_0$ for some $s\in S$. The set $S$ consists of order two elements in $G$ and each $s\in S$ can be viewed as a path from $x_0$ to $s x_0$: a homeomorphism from $[0,1]$ to that 1-cell. Let $\tilde{s}$ denote the unique lift (starting at $\tilde{x}_0$) of the path $s$ to $\tilde{X}$. Since the 1-skeleton $\tilde{X}^{(1)}$ is connected, the set $\{\tilde{s}: s\in S\}$ forms a set of generators for $A$. Furthermore, we see that $\tilde{s}^2$ is a contractible loop, so $\tilde{s}^2=1\in A$. By examining the 2-cells of $\tilde{X}$ containing $\tilde{x}_0$, we obtain precisely the relations in $R$ (where each $s\in S$ is replaced by $\tilde{s}$). 
        
        Let $Y$ be the Cayley 2-complex of $H:=\gen{S | R \cup \{s^2: s\in S\}}$. By the above discussion, $A$ is a quotient of $H$ and there is a covering map $\tilde{X} \to Y$. Since $\tilde{X}$ is simply connected, $H\cong\pi_1(Y)$ is the group of deck transformations~\cite[Proposition 1.39]{hatcher2002} of this covering map. Interpreting both $H$ and $A$ as a group of continuous maps $\tilde{X}\to \tilde{X}$ under composition, it can be checked that the identity map $H\to A$ is an isomorphism.
    \end{proof}

    In our application to $\DMumford{\mcA}(\R)$, the action of $G = S_n$ is \emph{not} simply transitive on the $0$-cells; rather, each $0$-cell has an order-two stabilizer (because reversing the order of the marked points can be done by an orientation-reversing automorphism of $\R\Pj^1$). The action of $S_n$ on the double cover $\overline{\DC{M}_{0,\mcA}}(\R)$, however, \emph{is} transitive and so Theorem \ref{thm:pres-pi1GX-simply-transitive} applies to it. To account for this step, we briefly state how equivariant fundamental groups behave under certain covering maps. 
    
    We say a covering map $p : (X', x_0') \to (X, x_0)$ of connected, pointed $G$-spaces is \emph{$G$-regular} if $p_*(\pi_1^G(X'))$ is a normal subgroup of $\pi_1^G(X)$. This is stronger than the usual notion of regular covering map.

    \begin{lem} \label{lem:stabilizer-semidirect}
        Let $p : (X', x_0') \to (X, x_0)$ be a $G$-regular covering map. Suppose the stabilizer of $x_0'$ is trivial. 
        Then
        \[
        \pi_1^G(X) \cong \pi_1^G(X') \rtimes G_{x_0},
        \]
        where $G_{x_0} \subseteq G$ is the stabilizer of $x_0$.
    \end{lem}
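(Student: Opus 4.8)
The plan is to realize $\pi_1^G(X)$ as an internal semidirect product $N\rtimes C$, where $N:=p_*^G(\pi_1^G(X'))$ and $C$ is an explicit copy of $G_{x_0}$. Write $\rho\colon \pi_1^G(X)\to G$ and $\rho'\colon \pi_1^G(X')\to G$ for the projections in the short exact sequence \eqref{eqn:ses_G_X}; both are surjective since $X$ and $X'$ are path connected. First I would check that $p_*^G$ is injective: comparing the sequences \eqref{eqn:ses_G_X} for $X'$ and for $X$ along the vertical maps $p_*$ (on $\pi_1$), $p_*^G$, and $\mathrm{id}_G$, the squares commute, $p_*$ is injective by covering space theory, and a short five lemma argument gives injectivity of $p_*^G$. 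Hence $N\cong\pi_1^G(X')$, and $N$ is a normal subgroup of $\pi_1^G(X)$ by hypothesis ($G$-regularity).

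Next I would introduce the candidate complement. For $h\in G_{x_0}$ let $c_{x_0}$ denote the constant path at $x_0$; then $(h,c_{x_0})\in\pi_1^G(X)$, and the multiplication law of Definition~\ref{def:equivariant-fundamental-group} shows that $\iota\colon G_{x_0}\to\pi_1^G(X)$, $\iota(h)=(h,c_{x_0})$, is a homomorphism with $\rho\circ\iota$ equal to the inclusion $G_{x_0}\hookrightarrow G$; in particular $\iota$ is injective. Set $C:=\iota(G_{x_0})$. To see $C\cap N=\{1\}$: if $(h,c_{x_0})=p_*^G(h,\gamma')=(h,p\circ\gamma')$ for a path $\gamma'$ from $x_0'$ to $h\cdot x_0'$ in $X'$, then $p\circ\gamma'$ is a null-homotopic loop at $x_0$, so the monodromy correspondence forces its lift starting at $x_0'$ — which is $\gamma'$ itself — to be a loop; thus $h\cdot x_0'=x_0'$, and triviality of $\mathrm{Stab}_G(x_0')$ gives $h=1$.

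It remains to prove $N\cdot C=\pi_1^G(X)$, which is the heart of the argument. Given $(g,\gamma)\in\pi_1^G(X)$, lift $\gamma$ to a path $\tilde\gamma$ in $X'$ starting at $x_0'$, with endpoint $y':=\tilde\gamma(1)\in p^{-1}(g\cdot x_0)$, so that $g^{-1}\cdot y'\in p^{-1}(x_0)$. Granting that $G_{x_0}$ acts transitively on the fibre $p^{-1}(x_0)$, we may write $g^{-1}\cdot y'=h^{-1}\cdot x_0'$ for a (unique, by triviality of $\mathrm{Stab}_G(x_0')$) element $h\in G_{x_0}$. A short computation with the multiplication law gives $(g,\gamma)\cdot\iota(h)^{-1}=(gh^{-1},\gamma)$, and since $\tilde\gamma$ runs from $x_0'$ to $y'=(gh^{-1})\cdot x_0'$, the pair $(gh^{-1},\tilde\gamma)$ lies in $\pi_1^G(X')$ and satisfies $p_*^G(gh^{-1},\tilde\gamma)=(gh^{-1},\gamma)$. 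Hence $(g,\gamma)=p_*^G(gh^{-1},\tilde\gamma)\cdot\iota(h)\in N\cdot C$. Combining the three steps, $\pi_1^G(X)=N\rtimes C\cong\pi_1^G(X')\rtimes G_{x_0}$, with $G_{x_0}$ acting on $N\cong\pi_1^G(X')$ by conjugation via $\iota$.

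The step I expect to be the main obstacle is exactly the transitivity of the $G_{x_0}$-action on $p^{-1}(x_0)$ used above. Taking $g=1$ and $\gamma$ an arbitrary loop at $x_0$ shows that, because monodromy already acts transitively on $p^{-1}(x_0)$ (as $X'$ is connected), this transitivity is in fact equivalent to $N\cdot C=\pi_1^G(X)$; so it is precisely here that the hypothesis that $p$ is $G$-regular — rather than merely a regular covering — must be invoked. I would try to extract it by tracking how conjugation in $\pi_1^G(X)$ permutes the basepoint subgroups $p_*^G\pi_1^G(X',z')$ for $z'\in p^{-1}(x_0)$ — normality of $N$ forces all of these to coincide — and then matching the resulting action on the fibre with the direct $G_{x_0}$-action. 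In the situation of actual interest, namely the orientation double cover $\overline{\DC{M}_{0,\mcA}}(\R)\to\overline{M_{0,\mcA}}(\R)$, the fibre over $x_0$ has two points interchanged by an orientation-reversing change of coordinates on $\R\Pj^1$ fixing all marked points, so this transitivity (and the identification of the subgroup responsible for it with $G_{x_0}\cong\Z/2\Z$) is immediate.
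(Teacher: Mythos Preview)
Your argument is essentially the paper's: both embed $G_{x_0}$ via $h\mapsto (h,\mathrm{id}_{x_0})$, verify trivial intersection with $p_*^G(\pi_1^G(X'))$ using $G_{x_0'}=1$, and factor an arbitrary $(g,\gamma)$ by lifting $\gamma$ and writing its endpoint as $g'\cdot x_0'$ so that $(g,\gamma)=(g',\gamma)\cdot((g')^{-1}g,\mathrm{id}_{x_0})$. You are in fact more careful than the paper: you check injectivity of $p_*^G$ explicitly, and you correctly isolate the one substantive step---that the lifted endpoint lies in $G\cdot x_0'$, equivalently that $G_{x_0}$ acts transitively on $p^{-1}(x_0)$---which the paper simply asserts without comment. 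Your observation that this is exactly where $G$-regularity must enter, and that it is immediate for the orientation double cover actually used, is apt.
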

    \begin{proof}
        We have
        \[G_{x_0}\cong \{(g, \id_{x_0}): g\in G_{x_0}\}\subseteq \pi_1^G(X),\]
        and
        \[G_{x_0} \cap p_*(\pi_1^G(X')) = G_{x_0'} = 1,\]
        since the stabilizer of $x_0'$ is trivial. Finally, these two subgroups generate $\pi_1^G(X)$: if $(g, \gamma) \in \pi_1^G(X)$, we may lift $\gamma$ to a path from $x_0'$ to some $y' = g' x_0'$. Then $g x_0 = g' x_0$, so
        \[(g, \gamma) = (g', \gamma) * ((g')^{-1} g, \id_{x_0})\]
        and $(g', \gamma) \in p_*(\pi_1^G(X')).$
    \end{proof}

    Returning to our case of interest, let $\DC{X}$ be a connected double cover for $X$, satisfying the assumptions of Lemma \ref{lem:stabilizer-semidirect}. Then $G_{x_0} \cong \Z/2\Z$. We then have the following commutative diagram, with exact rows:
    \begin{equation}\label{comdiag:X2:1-to-X}
        \begin{tikzcd}
        1 \arrow[r] & \pi_1(\DC{X}) \arrow[r] \arrow[d, "\text{index}\ 2"', hook] & \pi_1^G(\DC{X}) \arrow[r] \arrow[d, "\text{index}\ 2"', hook] & G \arrow[r] \arrow[d, Rightarrow, no head] & 1 \\
        1 \arrow[r] & \pi_1(X) \arrow[r]                                           & \pi_1^G(X) \arrow[r]                                           & G \arrow[r]                                & 1
        \end{tikzcd}
    \end{equation}
    In this case, a presentation for $\pi_1^G(X)$ can be stated directly in terms of one for $\pi_1^G(X')$.

    \begin{prop}\label{prop:group-pres-extend-by-Zmod2}
        Let $G$ be a group with subgroups $H$ and $K$. Assume $G=HK$, $H\cap K =1$ and $|K| = 2$ with generator $\alpha \in K$. Let $\iota: H\to H$ be the automorphism $\iota(h)=\alpha h \alpha$.
        
        Let $H=\gen{S|R}$ be a presentation for $H$ such that $\iota(S)\subseteq S$. Then a presentation of $G$ is given by
        \[
        G= \langle S \sqcup \{\alpha\} \mid R\cup \{\alpha^2\} \cup \{\alpha s\alpha \iota(s)^{-1}: s\in S\} \rangle.
        \]
        Equivalently, $G\cong (H*K)/\gen{\alpha s\alpha \iota(s)^{-1}: s\in S}=H\rtimes K$.
    \end{prop}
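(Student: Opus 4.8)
The plan is to compare $G$ directly with the abstract group
$\widetilde{G} := \langle\, S \sqcup \{\alpha\} \mid R \cup \{\alpha^2\} \cup \{\alpha s \alpha\, \iota(s)^{-1} : s \in S\}\,\rangle$
and show that the obvious map to $G$ is an isomorphism. First I would record the structural facts. Since $\iota(h) = \alpha h \alpha$ takes $H$ into $H$, the subgroup $H$ is normal, and together with $G = HK$, $H \cap K = 1$, $|K| = 2$, $\alpha$ generating $K$, this exhibits $G = H \rtimes K$ with $K \cong \Z/2\Z$ acting through $\iota$; in particular $G = H \cup H\alpha$, the two cosets being disjoint because $\alpha \notin H$. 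There is a homomorphism $\phi : \widetilde{G} \to G$ with $\phi(s) = s$ and $\phi(\alpha) = \alpha$, well-defined because each defining relator of $\widetilde G$ maps to $1$: the relators of $R$ hold in $H$, $\alpha^2 = 1$ in $K$, and $\phi(\alpha s \alpha\, \iota(s)^{-1}) = (\alpha s \alpha)\,\iota(s)^{-1} = \iota(s)\,\iota(s)^{-1} = 1$. Since $S$ and $\alpha$ generate $G$, the map $\phi$ is onto, so everything reduces to injectivity.

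To control $\widetilde G$, I would establish a normal form. From $\alpha^2 = 1$ we get $\alpha^{-1} = \alpha$, and from the cross relators we get $\alpha s = \iota(s)\alpha$ and $\alpha s^{-1} = \iota(s)^{-1}\alpha$ for $s \in S$; here $\iota(s)$ is again an element of $S$ by hypothesis, so these identities rewrite $\alpha$ past any generator to its right. Sweeping all occurrences of $\alpha$ to the right and collapsing powers of $\alpha$ modulo $2$ shows that every element of $\widetilde G$ lies in $\widetilde H$ or in $\widetilde H\alpha$, where $\widetilde H := \langle S \rangle \subseteq \widetilde G$; thus $[\widetilde G : \widetilde H] \le 2$. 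Next I would identify $\widetilde H$ with $H$: the relators $R$ hold in $\widetilde H$ (they are among the defining relators of $\widetilde G$), giving a surjection $\psi : H \onto \widetilde H$ fixing $S$, while $\phi$ restricted to $\widetilde H$ maps into $H$ and satisfies $\phi|_{\widetilde H}\circ \psi = \id_H$; hence $\psi$ is an isomorphism and $\phi|_{\widetilde H} : \widetilde H \xrightarrow{\ \sim\ } H$.

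Finally I would assemble the pieces. Because $\phi$ carries $\widetilde H$ onto $H$ and $\widetilde H\alpha$ onto $H\alpha$, and $H \cap H\alpha = \varnothing$ in $G$, the cosets $\widetilde H$ and $\widetilde H\alpha$ must themselves be disjoint, so $\widetilde G = \widetilde H \sqcup \widetilde H\alpha$; and $\phi$ is injective on $\widetilde H$ (just shown) and on $\widetilde H\alpha$ (if $\phi(h_1\alpha) = \phi(h_2\alpha)$ then $\phi(h_1) = \phi(h_2)$, so $h_1 = h_2$). A homomorphism injective on each of two disjoint blocks that map to disjoint blocks is injective, so $\phi$ is an isomorphism, which is the asserted presentation; the ``equivalently'' clause is just the observation that $\langle S\sqcup\{\alpha\}\mid R\cup\{\alpha^2\}\rangle = H * K$ and adjoining the remaining relators amounts to quotienting by their normal closure. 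The only delicate points — the normal form, and the fact that the coset $\widetilde H\alpha$ does not collapse into $\widetilde H$ — are both disposed of by pushing through the already-understood map $\phi$; morally this is nothing more than the standard presentation of a semidirect product $H \rtimes \Z/2\Z$, with the conjugation relators rewritten using $\alpha^{-1} = \alpha$.
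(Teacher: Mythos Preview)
Your argument is correct and complete: the map $\phi$ is well-defined and surjective, the rewriting $\alpha s = \iota(s)\alpha$ (valid because $\iota(S)\subseteq S$) gives the coset decomposition $\widetilde G = \widetilde H \cup \widetilde H\alpha$, the section argument $\phi|_{\widetilde H}\circ\psi = \id_H$ identifies $\widetilde H$ with $H$, and disjointness of the images $H$ and $H\alpha$ in $G$ forces injectivity of $\phi$.

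As for comparison: the paper states this proposition without proof, treating it as a standard fact about presentations of semidirect products (which it is). So there is no approach in the paper to compare against; you have supplied the omitted details, and the argument you give is exactly the expected one.
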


    \subsection{Cactus groups as $S_n$-equivariant fundamental groups}\label{subsec:rel-to-cactus-group}

    We briefly describe how the cactus groups arise as $S_n$-equivariant fundamental groups. As discussed above, we consider Hassett spaces $\DMumford{\mcA(a)}$ with the symmetric weight vector 
    \[\mcA(a) = \big( \tfrac{1}{a}, \ldots, \tfrac{1}{a}, 1).\]
    The symmetric group $S_n$ acts by permuting the labels $1, \ldots, n$; this action is compatible with the reduction maps and double covers discussed above.
    
    We always use as our base point for $\pi_1^{S_n}$ the irreducible stable curve
    \[(\Pj^1; 1, \ldots, n, \infty).\]
    These points are all evenly-spaced on $\Pj^1$, apart from $x_{n+1} = \infty$. For each $\sigma \in S_n$, we denote by $C_\sigma$ the curve with permuted marked points
    \[(\Pj^1; \sigma^{-1}(1), \ldots, \sigma^{-1}(n), \infty),\]
    which we call a \emph{permutation point} and denote $C_\sigma$. We note that on $\DMumford{\mcA(a)}(\R)$, the choice of $\sigma$ is unique up to right composition with $w_{1, n}$. Hence, there are exactly $\tfrac{1}{2} n!$ permutation points. In contrast, on the double cover $\overline{\DC{M}_{0,\mcA(a)}}(\R)$, the points $C_\sigma$ are all distinct, because $w_{1, n}$ effectively reverses the orientation of the $\R\Pj^1$. As such, there are $n!$ permutation points on the double cover.

    In the presentation of the unweighted cactus group $J_n$ above, the generator $s_{p,q}$ corresponds to the pair $(w_{p, q}, \hat{s}_{p, q})$, where $w_{p,q}\in S_n$ is the permutation that reverses the interval $p, \ldots, q$, and $\hat{s}_{p,q}$ is the path from $C_{\mathrm{id}}$ to $C_{w_{p,q}}$ defined by having the $p$-th through $q$-th marked points approach each other and collide. Explicitly, for $t \in [0, 1] \setminus \{\tfrac{1}{2}\}$, we put
    \[x_i(t) =
    \begin{cases}
    i & \text{ if } i \in \{1, \ldots, p-1, q+1, \ldots, n\}, \\
    t i + (1-t) (p+q-i) & \text{ if } i \in \{p, \ldots, q\},
    \end{cases}\]
    and $x_{n+1}(t) = \infty$ for all $t$. For $t=\tfrac{1}{2}$, the marked points $x_p, \ldots, x_q$ bubble off to a second irreducible component, where they are again evenly spaced relative to the attaching node (which is at $\infty$). See Figure \ref{fig:spq-hat-path}.
    
\begin{figure}
    \centering
    $\vcenter{\hbox{
    \begin{tikzpicture}
        \node [circle, draw, minimum size=1.5cm] (c1) at (0,0) {};
        \markedpoint{c1}{120}{}{1};
        \markedpoint{c1}{90}{}{2};
        \path (0, 0.75) arc (90:30:0.75) node [midway, above, sloped, align=center] {$\hdots$};
        \draw (0, 0.75) arc (90:0:0.75);
        \markedpoint{c1}{30}{}{$p$};
        \markedpoint{c1}{330}{}{$q$};
        \path ({0.75*cos(30)}, {0.75*sin(30)}) arc (30:-30:0.75) node [midway, above, sloped, align=center] {$\hdots$};
        \path ({0.75*cos(-30)}, {0.75*sin(-30)}) arc (-30:-90:0.75) node [midway, below, sloped, align=center] {$\hdots$};
        \markedpoint{c1}{240}{}{$n{+}1$};
        \markedpoint{c1}{270}{}{$n$};
    \end{tikzpicture}
    }}$
    $\leadsto$
    $\vcenter{\hbox{
    \begin{tikzpicture}
        \node [circle, draw, minimum size=1.5cm] (c1) at (0,0) {};
        \markedpoint{c1}{120}{}{1};
        \markedpoint{c1}{90}{}{2};
        \path (0, 0.75) arc (90:30:0.75) node [midway, above, sloped, align=center] {$\hdots$};
        \path ({0.75*cos(-30)}, {0.75*sin(-30)}) arc (-30:-90:0.75) node [midway, below, sloped, align=center] {$\hdots$};
        \markedpoint{c1}{240}{}{$n{+}1$};
        \markedpoint{c1}{270}{}{$n$};
    
        \node [circle, draw, minimum size=1.5cm] (c2) at (1.5,0) {};
        \markedpoint{c2}{60}{}{$p$};
        \markedpoint{c2}{300}{}{$q$};
        \path ({1.5+0.75*cos(30)}, {0.75*sin(30)}) arc (30:-30:0.75) node [midway, below, sloped, align=center] {$\hdots$};
    \end{tikzpicture}
    }}$
    $=$
    $\vcenter{\hbox{
    \begin{tikzpicture}
        \node [circle, draw, minimum size=1.5cm] (c1) at (0,0) {};
        \markedpoint{c1}{120}{}{1};
        \markedpoint{c1}{90}{}{2};
        \path (0, 0.75) arc (90:30:0.75) node [midway, above, sloped, align=center] {$\hdots$};
        \path ({0.75*cos(-30)}, {0.75*sin(-30)}) arc (-30:-90:0.75) node [midway, below, sloped, align=center] {$\hdots$};
        \markedpoint{c1}{240}{}{$n{+}1$};
        \markedpoint{c1}{270}{}{$n$};
    
        \node [circle, draw, minimum size=1.5cm] (c2) at (1.5,0) {};
        \markedpoint{c2}{60}{}{$q$};
        \markedpoint{c2}{300}{}{$p$};
        \path ({1.5+0.75*cos(30)}, {0.75*sin(30)}) arc (30:-30:0.75) node [midway, below, sloped, align=center] {$\hdots$};
    \end{tikzpicture}
    }}$
    $\leadsto$
    $\vcenter{\hbox{
    \begin{tikzpicture}
        \node [circle, draw, minimum size=1.5cm] (c1) at (0,0) {};
        \markedpoint{c1}{120}{}{1};
        \markedpoint{c1}{90}{}{2};
        \path (0, 0.75) arc (90:30:0.75) node [midway, above, sloped, align=center] {$\hdots$};
        \draw (0, 0.75) arc (90:0:0.75);
        \markedpoint{c1}{30}{}{$q$};
        \markedpoint{c1}{330}{}{$p$};
        \path ({0.75*cos(30)}, {0.75*sin(30)}) arc (30:-30:0.75) node [midway, above, sloped, align=center] {$\hdots$};
        \path ({0.75*cos(-30)}, {0.75*sin(-30)}) arc (-30:-90:0.75) node [midway, below, sloped, align=center] {$\hdots$};
        \markedpoint{c1}{240}{}{$n{+}1$};
        \markedpoint{c1}{270}{}{$n$};
    \end{tikzpicture}
    }}$
    \caption{The path $\hat{s}_{p,q}$ in $\DMumford{n+1}(\R)$ corresponding to $s_{p,q}\in J_n$: the $p$-th through $q$-th marked points approach one another and collide, reversing their order. (The positions of the marked points are not drawn to scale.)}
    \label{fig:spq-hat-path}
\end{figure}
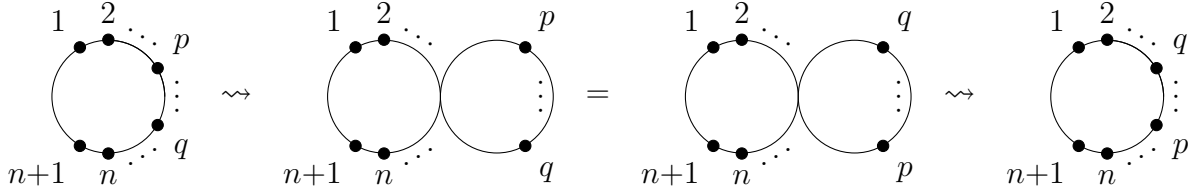

    We define the \emph{weighted cactus group} $J_n^a$ and \emph{pure weighted cactus group} $PJ_n^a$ as in the introduction,
    \[
    J_n^a := \pi_1(\overline{M_{0,\mcA(a)}}(\R)), \quad
    PJ_n^a := \pi_1(\overline{M_{0,\mcA(a)}}(\R)).
    \]
    We likewise define \emph{oriented weighted (pure) cactus groups}
    \[
    \tilde{J}_n^a := \pi_1(\overline{\DC{M}_{0,\mcA(a)}}(\R))), \quad
    P\tilde{J}_n^a := \pi_1(\overline{\DC{M}_{0,\mcA(a)}}(\R))).
    \]

    The various cactus groups are related to one another as follows. The reduction map $\overline{M_{0,n+1}}\onto \overline{M_{0, \mcA(a)}}$ is $S_n$-equivariant, hence induces a commutative diagram with exact rows
    \begin{equation}\label{comdiag:proj-Jn-to-Jna}
        \begin{tikzcd}
        1 \arrow[r] & PJ_n \arrow[r] \arrow[d, ->>] & J_n \arrow[r] \arrow[d, ->>] & S_n \arrow[r] \arrow[d, Rightarrow, no head] & 1 \\
        1 \arrow[r] & PJ_n^a \arrow[r]         & J_n^a \arrow[r]         & S_n \arrow[r]                                & 1
        \end{tikzcd}
    \end{equation}
    We note that $PJ_n\to PJ_n^a$ and $J_n \to J_n^a$ are surjective since they are induced by a blowdown map (this follows e.g. by the transversality homotopy theorem).

    By Proposition \ref{prop:group-pres-extend-by-Zmod2}, $\tilde{J}_n$ is the index 2 subgroup of $J_n$ generated by $s_{p,q}$ with $(p,q)\neq (1,n)$, and
    \[
    J_n \cong \tilde J_n \rtimes \Z/2\Z,
    \]
    where the $\Z/2\Z$ is generated by $s_{1, n}$. Likewise, we have
    \[
    J_n^a \cong \tilde{J}_n^a \rtimes \Z/2\Z.
    \]
    In particular, from \eqref{comdiag:X2:1-to-X}, we obtain the commutative diagram with exact rows:
        \begin{equation}
            \begin{tikzcd}
            1 \arrow[r] & P\tilde{J}_n^a \arrow[r] \arrow[d, "\text{index}\ 2"', hook] & \tilde{J}_n^a \arrow[r] \arrow[d, "\text{index}\ 2"', hook] & S_n \arrow[r] \arrow[d, Rightarrow, no head] & 1 \\
            1 \arrow[r] & PJ_n^a \arrow[r]                                           & J_n^a \arrow[r]                                           & S_n \arrow[r]                                & 1
            \end{tikzcd}
        \end{equation}
    We remark that for $a=1,2$, the reduction map $\overline{M_{0,n+1}}\to \overline{M_{0, \mcA(a)}}$ is an isomorphism and the above diagrams yield isomorphisms $J_n\cong J_n^a$, $PJ_n\cong PJ_n^a$, $\tilde{J}_n\cong \tilde{J}_n^a$, $P\tilde{J}_n\cong P\tilde{J}_n^a$.

    \section{Dual CW-Structure on the Hassett Space and its Double Cover}\label{sec:dual-cell}

As discussed in Section \ref{subsec:curves-moduli-spaces}, $\overline{M_{0,n+1}}(\R)$ has a standard stratification into cells indexed by stable trees; the \emph{standard tree} $\tau^\mathrm{std}(C; x_\bullet)$ describes the arrangement of the marked points $x_\bullet$ and nodes on the irreducible components of $C$. The cells
\[
X_\tau = \{(C; x_\bullet) : \tau^\mathrm{std}(C; x_\bullet) = \tau\}
\]
are well-known to be products of associahedra. There is a dual decomposition based on assigning to $(C; x_\bullet)$ a \emph{dual tree}, and for which the resulting cells turn out to be cubes. We recall the details of this construction below, following the presentation of \cite{ilinKLPR2023}.

The goal of this section is to construct a cell decomposition of the Hassett spaces $\overline{M_{0, \mcA}}(\R)$, for weights $\mcA=\mcA(a)$ where $a\in [n-1]$. In the case $a = 1$, this stratification is dual to the standard one. These cells are indexed by trees similar to those defined in Section \ref{subsec:rooted-trees-defs}, except that the leaves are labelled by subsets of $[n]$ and form a partition of $[n]$. The parameter $a$ upper-bounds the sizes of the parts of the partition. 
In the case where $a=1$, the description is especially simple and recovers the dual decomposition of $\overline{M_{0, n+1}}(\mathbb{R})$ into cubes. 

\begin{rmk}\label{rmk:ilin-LB-decomp}
    Ilin et. al~\cite{ilinKLPR2023} also describe a cell decomposition of the real locus of the line bundle $\mathbb{L}_{n+1} \to \DMumford{n+1}(\R)$ (which they denote by by $\widetilde{M_{0,n+1}}(\R)$) and a ``cactus flower space". The decomposition given in Section~\ref{subsec:dual-Mod-space-unweighted-strat} reduces to the one given by Ilin et al., restricted to its zero section $\overline{M_{0,n+1}}(\R)$.)
\end{rmk}

    \subsection{Dual Stratification of $\overline{M_{0,n+1}}(\R)$}\label{subsec:dual-Mod-space-unweighted-strat}

    We first describe the dual stratification of the unweighted moduli space. This material can be found in \cite{davisJS2003, ilinKLPR2023}. The description uses ordered rooted trees\footnote{rooted trees are drawn with the root on the bottom, which is the convention in ~\cite{ilinKLPR2023}} and distances (gaps) between consecutively placed marked points. We will describe, in Section~\ref{subsec:dual-refined-cubes}, two different kinds of cells that are combinatorially equivalent to cubes: \textit{dual cubes} which form the dual stratification; and \textit{refined cubes} which form the intersection of both the standard and dual stratifications. Ilin et al. refer to these as \textit{big cubes} and \textit{little cubes} respectively.

    \subsubsection{The distance algorithm}
    
    We first give an algorithm that associates to a vector $\vec{d} = (d_1, \ldots, d_{n-1}) \in \mathbb{R}^{n-1}_{\geq 0}$ and a permutation $\sigma \in S_n$ a rooted tree in 
    $$\Rtree^{\sigma}([n]):=\Rtree(\sigma(1), \ldots, \sigma(n)).$$ 
    In particular, we give a surjective map 
    $$\R_{\ge0}^{n-1}\onto \Rtree^{\sigma}([n]).$$
    We call $\vec{d}$ a \emph{vector of differences} and the $d_i$ \emph{distances}.

\begin{defn}[Distance algorithm] \label{def:dist-alg-diff-to-rtree}
    Let $\vec{d}$ and $\sigma$ be as above. We construct $\tau$ as follows. We begin with $n$ isolated vertices labeled $\sigma(1), \ldots, \sigma(n)$ from left to right, each considered as a rooted tree with one vertex. We join these trees together by repeating the following steps until $\vec{d}$ is empty.
    \begin{enumerate}
        \item Let $d := \min d_i$ be the minimum distance remaining in $\vec{d}$.
        \item For each sequence of consecutive copies of $d$ in $\vec{d}$, say $d_i = d_{i+1} = \cdots = d_j$, we attach the roots of the $i$-th through $(j+1)$-st subtrees as ordered children of a new, unlabeled vertex. We then delete $d_i, \ldots, d_j$ from $\vec{d}$.
    \end{enumerate}
    We write $\tau^{\mathrm{dist}}(\sigma, \vec{d})$ for the resulting rooted tree. See Figure \ref{fig:tree-algorithm-on-stable-curve} (Left).
\end{defn}

In fact, this map is uniquely determined by the following two properties: if $v_i$ denotes the nearest common ancestor of leaves $\sigma(i)$ and $\sigma(i+1)$ in $\tau^{\mathrm{dist}}(\sigma, \vec{d})$, then under the partial order~\eqref{eqn:rtree-partial-order} on $\tau$,
\begin{itemize}
    \item if $v_i=v_j$, then $d_i=d_j$; and
    \item if $v_i<v_j$, then $ d_i<d_j$.  
\end{itemize}
That is, vertices close to (far from) the root represent large (small) gaps between two consecutively placed marked points. 
    
We observe that $\tau^{\mathrm{dist}}(\sigma, \vec{d})$ is binary if and only if every two consecutive coordinates of $\vec{d}$ are distinct (i.e. $d_i\neq d_{i+1}$). At the opposite extreme, if all the distances in $\vec{d}$ are equal, $\tau^{\mathrm{dist}}(\sigma, \vec{d})$ has one internal vertex and its leaves spell out $\sigma$ from left to right.
    
    Finally, given a tree in $\Rtree^{\sigma}([n])$, it is straightforward to find a distance vector $\vec{d}$ compatible with it, so surjectivity readily follows.

    \begin{rmk}\ \label{rmk:distances-to-trees}
    We will run the distance algorithm on successive distances of an ordered list of values, that is, with $\vec{d}$ defined by
    \[d_i := x_i - x_{i-1} \text{ for some } x_1 < \cdots < x_n \in \R.\]
    In this case,
    \begin{enumerate}
    \item\label{rmk-item:tau-little-inv-under-pos-affine} The resulting tree is invariant under acting on the $x_i$ by an affine linear transformation $x \mapsto ax+b$ with $a > 0$. If $a < 0$, the action flips the resulting tree at its root.
    \item For general $\vec{x}$, Definition~\ref{def:dist-alg-diff-to-rtree} outputs a full binary tree.
    \end{enumerate}
    \end{rmk}

    \subsubsection{From marked curves to trees}\label{subsec:marked-curves-to-trees}

    We now associate to each element of $\overline{M_{0,n+1}}(\R)$ a tree in $\Rtree([n])$.

    We first define maps on the interior of the moduli space,
    \begin{align*}
    \littlecell &: M_{0,n+1}(\R) \to \RStRtree([n]), \\
    \bigcell &: M_{0,n+1}(\R) \to \StRtree([n]),
    \end{align*}
    as follows. Given $(C; x_\bullet)$, we choose any coordinates on $C \cong \R\Pj^1$ for which the marked point $x_{n+1} = \infty$. The remaining marked points are ordered as
    \[
    x_{\sigma(1)} < \cdots < x_{\sigma(n)} \text{ for some } \sigma \in S_n.
    \]
    Let $\vec{d}$ be the vector of successive differences
    \[
    \vec{d} = (x_{\sigma(2)} - x_{\sigma(1)}, \ldots, x_{\sigma(n)} - x_{\sigma(n-1)})
    \]
    and let $\tau = \tau^\mathrm{dist}(\sigma; \vec{d}) \in \Rtree^\sigma([n])$ be the rooted tree obtained from the distance algorithm. By Remark \ref{rmk:distances-to-trees}(\ref{rmk-item:tau-little-inv-under-pos-affine}), $\tau$ is independent of the choice of coordinates on $C$, up to flipping at the root, as long as $x_{n+1} = \infty$. Therefore, following the maps in Equations \eqref{eqn:map-r-rst-tree} and \eqref{eqn:map-dual-rst-st-tree}, we define $\littlecell(C; x_\bullet) \in \RStRtree([n])$ to be $\tau$ with (only) the root vertex marked as flippable, and we define $\bigcell(C; x_\bullet) \in \StRtree([n])$ to be $\tau$ with every internal vertex marked as flippable.

    Next, we extend the maps $\littlecell,\bigcell$ to the boundary of the moduli space:
    \begin{align*}
        \littlecell &: \overline{M_{0,n+1}}(\R)\to \RStRtree([n]), \\
        \bigcell &: \overline{M_{0,n+1}}(\R)\to \StRtree([n]),
    \end{align*}
    as follows.
    Let $(C; x_{\bullet})$ be an $(n+1)$-pointed stable curve. We choose rooted coordinates on $(C; x_\bullet)$ as in Definition \ref{def:rooted-coordinates}. 
    We then compute $\littlecell(C')$ for each component $C'$, together with its marked points and nodes, by the algorithm above. Finally, we let $\tau \in \Rtree([n])$ be obtained by joining the resulting trees according to how the components of $C$ are attached: if $C''$ is the other component attached to $C'$ at $q(C')$, we replace the corresponding leaf of $\littlecell(C'')$ by the subtree $\littlecell(C')$. The resulting tree $\tau$ is well-defined up to flips at the root and at the vertices corresponding to the roots of the trees $\littlecell(C')$. Accordingly, we define $\littlecell(C; x_\bullet)$ to be $\tau$ with precisely these vertices marked flippable. We define $\bigcell(C; x_\bullet)$ to be $\tau$ with all its vertices marked flippable. See Figure \ref{fig:tree-algorithm-on-stable-curve}~(Right). We remark that these trees can equivalently be obtained by replacing each internal vertex of $\tau^\mathrm{std}(C; x_\bullet)$ by the appropriate subtree $\littlecell(C')$. \\

\begin{figure}
    \centering
    \scalebox{0.65}{
    \begin{tikzpicture}[scale=1.5]
        \draw[red] (-0.5,0) -- (7,0) node[anchor = west] {$\color{black}\R$};
        \node[circle, draw, fill, inner sep =1pt, label = { 90: $x_1=0$ }] (x1) at (0,0) {};
        \node[circle, draw, fill, inner sep =1pt, label = { 90: $x_2=1$ }] (x2) at (1,0) {};
        \node[circle, draw, fill, inner sep =1pt, label = { 90: $x_3=2$ }] (x3) at (2,0) {};
        \node[circle, draw, fill, inner sep =1pt, label = { 90: $x_4=4$ }] (x4) at (4,0) {};
        \node[circle, draw, fill, inner sep =1pt, label = { 90: $x_5=5$ }] (x5) at (5,0) {};
        \node[circle, draw, fill, inner sep =1pt, label = { 90: $x_6=6.5$ }] (x6) at (6.5,0) {};
        \node[circle, draw, fill, inner sep =1pt] (x123) at (1,-1) {};
        \node[circle, draw, fill, inner sep =1pt] (x45) at (4.5,-1) {};
        \node[circle, draw, fill, inner sep =1pt] (x456) at (5.5,-2) {};
        \node[circle, draw, fill, inner sep =1pt] (x123456) at (3.25,-4) {};

        \coordinate[label={270: $x_{n+1}$}] (xnplus1) at (3.25,-4.5) {};
    
        \draw[] (x1) -- (x123);
        \draw[] (x2) -- (x123);
        \draw[] (x3) -- (x123);
        \draw[] (x4) -- (x45);
        \draw[] (x5) -- (x45);
        \draw[] (x45) -- (x456);
        \draw[] (x6) -- (x456);
        \draw[] (x123) -- (x123456);
        \draw[] (x456) -- (x123456);
        \draw[dotted, thick] (x123456) -- (xnplus1);
    \end{tikzpicture}
    }
    \scalebox{0.62}{
        \begin{tikzpicture}[scale=2]
        
            \def\cx{0}
            \def\cy{0}
            \node [circle, draw, minimum size=4cm] (c1) at (\cx,\cy) {};
            
            \def\labelpos{100};
            \markedpoint{c1}{\labelpos}{}{4};
            \def\temp{135} 
            \draw[ultra thick] ({\cx+cos(\temp)/2}, {\cy+sin(\temp)/2}) -- ({\cx+cos(\labelpos)},{\cy+sin(\labelpos)});

            \def\labelpos{5};
            \markedpoint{c1}{\labelpos}{}{10};
            \def\temp{45} 
            \draw[ultra thick] ({\cx+cos(\temp)/2}, {\cy+sin(\temp)/2}) -- ({\cx+cos(\labelpos)},{\cy+sin(\labelpos)});

            \def\labelpos{225};
            \markedpoint{c1}{\labelpos}{}{$n+1$};
            \draw[dotted] (\cx, \cy) -- ({\cx+cos(\labelpos)},{\cy+sin(\labelpos)});

            \def\cxprev{0}
            \def\cyprev{0}

            \def\cx{sqrt(2)}
            \def\cy{sqrt(2)}
            \node [circle, draw, minimum size=4cm] (c2) at ({\cx}, {\cy}) {};

            \def\labelpos{90};
            \markedpoint{c2}{\labelpos}{}{5};
            \def\temp{45} 
            \draw[ultra thick] ({\cx+cos(\temp)/2}, {\cy+sin(\temp)/2}) -- ({\cx+cos(\labelpos)},{\cy+sin(\labelpos)});
            
            \def\labelpos{330};
            \markedpoint{c2}{\labelpos}{}{9};
            \draw[ultra thick] ({\cx}, {\cy}) -- ({\cx+cos(\labelpos)},{\cy+sin(\labelpos)});
        
            \def\labelpos{45};
            \node[blue, 
                 inner sep = 0pt, 
                 minimum size=1.5mm, 
                 rectangle, 
                 label = {\labelpos : $\infty$ }  
                 ] at (c1.\labelpos) 
                  {};
            \draw[blue, ultra thick] ({\cx},{\cy}) -- ({\cxprev+cos(\labelpos)/2}, {\cyprev+sin(\labelpos)/2});
            \draw[ultra thick] ({\cxprev},{\cyprev}) -- ({\cxprev+cos(\labelpos)/2}, {\cyprev+sin(\labelpos)/2});
        
            \def\cxprev{sqrt(2)}
            \def\cyprev{sqrt(2)}
                  
            \def\cx{2*sqrt(2)}
            \def\cy{2*sqrt(2)}
            \node [circle, draw, minimum size=4cm] (c3) at ({\cx}, {\cy}) {};
            \def\labelpos{90};
            \markedpoint{c3}{\labelpos}{}{6};
            \draw[ultra thick] ({\cx}, {\cy}) -- ({\cx+cos(\labelpos)},{\cy+sin(\labelpos)});

            \def\lpa{330}
            \def\lpb{300}
            \def\avglp{(\lpa+\lpb)/2}
            \def\labelpos{\lpa};
            \markedpoint{c3}{\labelpos}{}{7};
            \draw[ultra thick] ({\cx+cos(\avglp)/2},{\cy+sin(\avglp)/2}) -- ({\cx+cos(\labelpos)},{\cy+sin(\labelpos)});
            \def\labelpos{\lpb};
            \markedpoint{c3}{\labelpos}{}{8};
            \draw[ultra thick] ({\cx+cos(\avglp)/2},{\cy+sin(\avglp)/2}) -- ({\cx+cos(\labelpos)},{\cy+sin(\labelpos)});

            \draw[ultra thick] ({\cx},{\cy}) -- ({\cx+cos(\avglp)/2},{\cy+sin(\avglp)/2});

            \def\labelpos{45};
            \node[blue, 
                 inner sep = 0pt, 
                 minimum size=1.5mm, 
                 label = {\labelpos : $\infty$ }  
                 ] at (c2.\labelpos) 
                  {};
            \draw[blue, ultra thick] ({\cx},{\cy}) -- ({\cxprev+cos(\labelpos)/2}, {\cyprev+sin(\labelpos)/2});
            \draw[ultra thick] ({\cxprev},{\cyprev}) -- ({\cxprev+cos(\labelpos)/2}, {\cyprev+sin(\labelpos)/2});

            \def\cxprev{0}
            \def\cyprev{0}
            
            \def\cx{-sqrt(2)}
            \def\cy{sqrt(2)}

            \def\lpa{180}
            \def\lpb{150}
            \def\avglp{(\lpa+\lpb)/2}
            \node [circle, draw, minimum size=4cm] (c4) at ({\cx}, {\cy}) {};
            \def\labelpos{\lpa};
            \markedpoint{c4}{\labelpos}{}{1};
            \draw[ultra thick] ({\cx+cos(\avglp)/2},{\cy+sin(\avglp)/2}) -- ({\cx+cos(\labelpos)},{\cy+sin(\labelpos)});
            \def\labelpos{\lpb};
            \markedpoint{c4}{\labelpos}{}{2};
            \draw[ultra thick] ({\cx+cos(\avglp)/2},{\cy+sin(\avglp)/2}) -- ({\cx+cos(\labelpos)},{\cy+sin(\labelpos)});
            \def\labelpos{90};

            \draw[ultra thick] ({\cx},{\cy}) -- ({\cx+cos(\avglp)/2},{\cy+sin(\avglp)/2});

            \markedpoint{c4}{\labelpos}{}{3};
            \draw[ultra thick] ({\cx}, {\cy}) -- ({\cx+cos(\labelpos)},{\cy+sin(\labelpos)});
        
            \def\labelpos{135};
            \node[blue, 
                 inner sep = 0pt, 
                 minimum size=1.5mm, 
                 label = {\labelpos : $\infty$ }  
                 ] at (c1.\labelpos) 
                  {};
            \draw[blue, ultra thick] ({\cx},{\cy}) -- ({\cxprev+cos(\labelpos)/2}, {\cyprev+sin(\labelpos)/2});
            \draw[ultra thick] ({\cxprev},{\cyprev}) -- ({\cxprev+cos(\labelpos)/2}, {\cyprev+sin(\labelpos)/2});

        \end{tikzpicture}
    }
    \caption{Left: The tree $\littlecell(0, 1, 2, 5, 6, 6.5)$, obtained by running the distance algorithm on the successive differences (Def. \ref{def:dist-alg-diff-to-rtree}). Right: A stable curve $(C; x_{\bullet})$ shown approximately to scale, with the tree $\littlecell(C; x_{\bullet})$ overlaid. Each component is coordinatized by setting the point nearest to $x_{n+1}$ to $\infty$ and the tree on each component is calculated as in the distance algorithm. These trees are attached along the three highlighted edges shown in blue. The resulting tree is only well-defined up to flipping each irreducible component, i.e. flipping at the child of each highlighted edge.}
    \label{fig:tree-algorithm-on-stable-curve}
\end{figure}
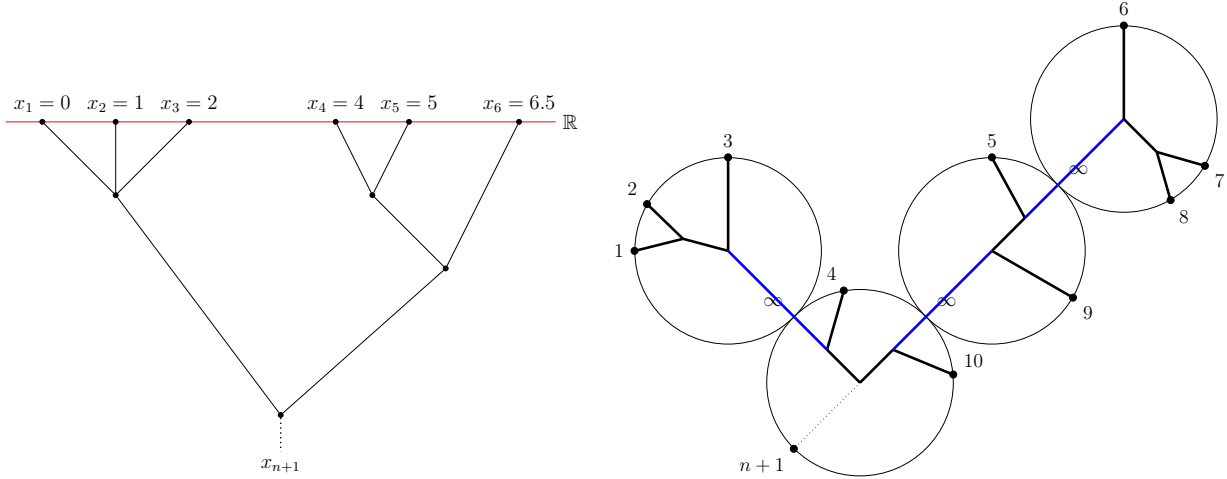

    \subsubsection{Dual cubes and refined cubes}\label{subsec:dual-refined-cubes} We now describe the cells in our dual stratification of both $\overline{M_{0, n+1}}(\R)$.
    For a stable tree $\tau \in \StRtree([n])$, we define the \textbf{dual cube}
    \[
    W_\tau = \{(C; x_\bullet) \in \DMumford{n+1}(\R) : \bigcell(C; x_\bullet) = \tau\}.
    \]
    For refined stable $\tau\in \RStRtree([n])$, we similarly define the \textbf{refined cube}
    \[
    R_\tau = \{(C; x_\bullet) \in \DMumford{n+1}(\R) : \littlecell(C; x_\bullet) = \tau\}.
    \]
 
    Following the methods in \cite{ilinKLPR2023}, we show that $\overline{W_{\tau}}\cong [-1,1]^{\dim W_{\tau}}$. More precisely, we have the following. 
    
    \begin{prop}\label{prop:big-cube-hoemo-to-prod-intervals}
        Let $\tau\in \StRtree([n])$. Then $\overline{W_{\tau}}$ is homeomorphic to a product of copies of the interval $[-1,1]$; we have
        \begin{equation}\label{eqn:big-cube-codim-formula}
            \dim W_\tau = |E_{\Int}(\tau)|, \text{ or equivalently }
            \codim W_{\tau} = \sum_{v\in V_{\Int}(\tau)} (c(v)-2).
        \end{equation}
        In particular, $\overline{W_{\tau}}$ is top-dimensional if and only if $\tau$ is a binary tree, and is $0$-dimensional if and only if $\tau$ consists only of a root vertex connected to $n$ leaves.

        Let $\tau \in \RStRtree([n])$. Then $\overline{R_\tau}$ is homeomorphic to a product of copies of the interval $[0, 1]$. Let $E \subseteq E_{\Int}(\tau)$ be the set of edges whose child vertex is not flippable. Let $V \subseteq V_{\Int}(\tau)$ be the set of flippable non-root vertices. We have
        \begin{equation}
            \dim R_\tau = |E|, \text{ or equivalently }
            \codim R_{\tau} = |V| + \sum_{v\in V_{\Int}(\tau)} (c(v)-2).
        \end{equation}
    \end{prop}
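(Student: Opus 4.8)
The plan is to follow the method of Ilin et al.\ \cite{ilinKLPR2023}: run the distance algorithm ``in reverse'' to equip each cell with explicit coordinates, identify the open cell with an open cube, and then read off the closure. I would treat the refined cubes $R_\tau$ first, since the dual cubes $W_\tau$ are assembled out of them.

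Fix a refined stable tree $\tau$ with flippable set $F \ni \mathrm{root}$, and let $E \subseteq E_{\Int}(\tau)$ be the internal edges with non-flippable child, equivalently the set of non-flippable internal vertices $w$, each with a well-defined parent $u(w)$. Given $(C;x_\bullet)\in R_\tau$ with rooted coordinates, on each component $C_v$ ($v\in F$) the finite special points, ordered left to right, have successive differences whose distance algorithm reproduces exactly the part of $\tau$ lying between $v$ and the flippable vertices below it. On that subtree, the gap grouped at a non-flippable vertex $w$ is strictly smaller than the one grouped at $u(w)$, so the ratio $t_w\in(0,1)$ is well-defined; conversely $\tau$ together with $(t_w)_{w\in E}$ recovers $(C;x_\bullet)$ up to the flips at flippable vertices, which is precisely the ambiguity in the rooted coordinates. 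This gives a homeomorphism $R_\tau \cong (0,1)^{E}$, so $\dim R_\tau = |E|$; the codimension formula follows from the tree identity $|E_{\Int}(\tau)| + \sum_{v\in V_{\Int}(\tau)}(c(v)-2) = n-2$ together with $|E_{\Int}(\tau)| = |E| + |V|$, where $V$ is the set of flippable non-root vertices.

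The remaining work is to extend these coordinates to a homeomorphism $[0,1]^{E} \cong \overline{R_\tau}$. Sending $t_w\to 0$ forces the marked points in the subtree below $w$ to collide on the component carrying them, hence to bubble off, so the limiting curve lies in $R_{\tau'}$ with $\tau'$ obtained from $\tau$ by making $w$ flippable; sending $t_w\to 1$ makes the gap at $w$ coincide with the one at $u(w)$, so the limiting curve lies in $R_{\tau''}$ with the edge $(u(w),w)$ contracted. In both cases the new tree has one fewer non-flippable internal vertex, so these are the codimension-one faces, and simultaneous limits give the lower faces; checking that the limiting curves vary continuously in $\overline{M_{0,n+1}}(\R)$ is routine. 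The extended map is then a continuous surjection from a compact space onto $\overline{R_\tau}$, hence a homeomorphism once it is injective, and injectivity is the crux. I would prove it by induction on $\dim R_\tau$: the restriction of the parametrization to a face of the cube is the parametrization of a smaller refined cube, and distinct faces land in distinct cells because bubbling off different clusters, or bubbling versus contracting an edge, always changes the standard tree (resp.\ the set of flippable vertices). The main obstacle is exactly this bookkeeping --- showing that no two faces of $[0,1]^{E}$ get glued together in $\overline{M_{0,n+1}}(\R)$, i.e.\ that the face poset of $\overline{R_\tau}$ is that of a cube.

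Finally, let $\tau$ be a stable tree with $m = |V_{\Int}(\tau)|$ internal vertices. Then $W_\tau$ is the disjoint union of the refined cells $R_{(\tau^{\circ},G)}$ over flippable sets $G\supseteq\{\mathrm{root}\}$ and planar representatives $\tau^{\circ}$ of the underlying tree (modulo flips at $G$); the maximal ones are the $2^{m-1}$ cubes $\overline{R_{(\tau^{\circ},\{\mathrm{root}\})}} \cong [0,1]^{|E_{\Int}(\tau)|}$, one for each way of independently orienting the $m-1$ non-root internal vertices. By the refined case and the description of their shared faces --- two such cubes are glued along the locus $t_w=0$, where $w$ bubbles off and its orientation is forgotten --- these cubes assemble into $[-1,1]^{|E_{\Int}(\tau)|}$, with the coordinate at an internal edge $(u,w)$ ranging over $[-1,0]$ or $[0,1]$ according to the orientation at $w$; hence $\overline{W_\tau}\cong[-1,1]^{|E_{\Int}(\tau)|}$ and $\dim W_\tau = |E_{\Int}(\tau)|$. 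The displayed codimension formula is again the tree identity above, and the last two assertions are immediate: $\tau$ is binary iff every $c(v)=2$ iff $\codim W_\tau = 0$, and $\overline{W_\tau}$ is a point iff $|E_{\Int}(\tau)|=0$ iff the root is the only internal vertex.
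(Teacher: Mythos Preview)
Your approach is essentially the same as the paper's: build explicit ratio coordinates $\theta_\tau : R_\tau \to (0,1)^E$, extend to $[0,1]^E$, and then glue $2^{m-1}$ closed refined cubes along their $t_w=0$ faces to obtain $\overline{W_\tau}\cong[-1,1]^{|E_{\Int}(\tau)|}$, negating a coordinate when the orientation at $w$ flips. The dimension and codimension formulas, and the assembly into dual cubes, match what the paper does.

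However, you have the emphasis exactly backwards on which step is delicate. You call the continuity of the extension ``routine'' and identify injectivity as ``the crux''; the paper takes the opposite view. Bijectivity on the closed cube is indeed straightforward from the recursive structure of the distance algorithm --- distinct faces carry distinct refined trees, as you say. What is \emph{not} routine is continuity as $t_w\to 0$: your parametrization $\varphi_\tau$ lands in $M_{0,n+1}(\R)$, given by explicit marked points on $\mathbb{P}^1$, and when points collide the target is a boundary stratum of $\overline{M_{0,n+1}}(\R)$. Verifying that the limit exists in the compactification, and agrees with the refined-cube parametrization on that stratum, requires passing to local blowup coordinates near the limiting nodal curve. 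The paper handles this by writing the local ring at the boundary point as a polynomial ring in divisor coordinates $x_v$ (one per non-root internal vertex), expressing the marked-point positions $y_i$ in terms of the $x_v$, and then solving back to exhibit each $x_w/t_w$ as a regular function of the $t_v$ with $v<_{RL}w$ (in a reverse-level order) having value $1$ at the origin. This is the content you are skipping when you write ``checking that the limiting curves vary continuously \ldots\ is routine.'' Your induction-on-dimension argument for injectivity is fine, but it does not substitute for this computation.
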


    We note that the dimensions and codimensions of the dual cells are reversed compared to the standard cells; see Equation \eqref{eqn:dimn-formula-for-stdcell-Xtau}. We sketch the proof of Proposition~\ref{prop:big-cube-hoemo-to-prod-intervals} below.

    \begin{rmk}(Some topological facts about the dual and refined cubes) \label{rmk:big-little-cube-topo-facts} \ 
        \begin{enumerate}
            \item The dual and refined cubes $W_{\tau}, R_{\tau}$ are locally closed. We thus sometimes (by abuse of notation) refer to them as \textit{open} cubes. We call their closures \textit{closed} cubes. \\
            
            \item  Let $\tau \in \RStRtree([n])$. Let $\tau^{\mathrm{dual}} \in \StRtree([n])$ be its image under the map \eqref{eqn:map-dual-rst-st-tree}, making all vertices flippable, and let $\tau^{\mathrm{std}}$ be its image under the map \eqref{eqn:map-std-rst-st-tree}, contracting all edges with non-flippable children. It is immediate that $R_{\tau} = W_{\tau^{\mathrm{dual}}} \cap X_{\tau^{\mathrm{std}}}$. \\

            \item The interior of the moduli space decomposes as $M_{0,n+1}(\R)= \coprod_{\tau} R_{\tau}$, where $\tau$ ranges over the refined stable trees on $[n]$ with only the root marked as flippable.
            
        \end{enumerate}

    \end{rmk}

    If $\tau \in \StRtree([n])$ with $|V_{\Int}(\tau)|=1$, then $W_{\tau}$ is a single point. Specifically, 
    this point is the permutation point 
    $$(\R\Pj^1; \inv{\sigma}(1), \ldots, \inv{\sigma}(n), \infty)$$
    for some $\sigma \in S_n$.
    This choice of $\sigma$ is unique precisely up to right composition with the involution $w_{1,n}$. Hence, there are exactly $\frac{1}{2}n!$ permutation points in $\overline{M_{0,n+1}}(\R)$. Similarly, for $\tau \in \RStRtree([n])$ with every vertex flippable, $R_{\tau}$ is a single point. It corresponds to the unique stable curve $(C; x_\bullet)$ with standard tree $\tau$, such that, in rooted coordinates, every component is isomorphic to a permutation point. We call these \emph{stable permutation points}.

    We now discuss the homeomorphisms of Proposition \ref{prop:big-cube-hoemo-to-prod-intervals}. We begin with the open refined cells $R_\tau$. 
    
    Let $\tau \in \RStRtree([n])$. We first consider the case where where only the root of $\tau$ is flippable. By abuse of notation, we let $\tau$ denote either one of the two possible representatives in $\Rtree([n])$. For each $v\in V_{\Int}(\tau)$, let $i_v, j_v$ be any two successive leaves such that $v$ is the nearest common ancestor of $i_v$ and $j_v$. (That is, for some successive child vertices $w, w'$ of $v$, $i_v$ is the largest leaf of $w$ and $j_v$ is the smallest leaf of $w'$). We define the map 
    \begin{equation}\label{eqn:refined-cell-to-I-Eint}
        \theta_{\tau}: R_{\tau}\to (0,1)^{E_{\Int}(\tau)}\text{ by }
        (\R\Pj^1;x_{\bullet})\mapsto \left(\frac{x_{j_v}-x_{i_v}}{x_{j_u}-x_{i_u}}\right)_{(u,v)\in E_{\Int}(\tau)}\in (0,1)^{E_{\Int}(\tau)}
    \end{equation}
    where the coordinates are chosen such that $x_{n+1}=\infty$ and such that the distance algorithm produces $\tau$ (rather than $\rflip \tau$).
    
    See Figure~\ref{fig:eg-of-open-cube-homeo-theta-tau} for an example. 
    Notice that when a vertex has more than two children, any two consecutively placed children can be used in the calculation. 
    
    We define an inverse map  
    \begin{equation}
    \varphi_{\tau} :(0,1)^{E_{\Int}(\tau)}\to R_{\tau}
    \end{equation}
    as follows: Given $r_e\in (0,1)$ for each $e\in E_{\Int}(\tau)$, we define $(\R\Pj^1; x_{\bullet})\in {M_{0,n+1}}(\R)$ by setting $x_1=0,\ x_{n+1}=\infty$, and for $2\le i\le n$, 
    \begin{equation} \label{eqn:def-varphi-tau}
    x_i=x_{i-1}+\prod_{e} r_e,
    \end{equation}
    where $e$ runs over all the edges on the path from the root to the nearest common ancestor of the $i$-th and $(i{-}1)$-st leaf. By convention, the empty product is $1$, so we obtain $x_i - x_{i{-}1} = 1$ whenever the nearest common ancestor is the root.

    \begin{lem}\label{lem:little-cube-interior-only-homeo-to-cube}
        Let $\tau \in \RStRtree([n])$ where only the root is flippable. Then the map 
        $$\theta_{\tau}: R_\tau \overset{\cong}{\longrightarrow} (0,1)^{E_{\Int}(\tau)}$$ 
        given by equation~\eqref{eqn:refined-cell-to-I-Eint} is a homeomorphism.
    \end{lem}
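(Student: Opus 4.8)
The plan is to show that $\theta_\tau$ and $\varphi_\tau$ are mutually inverse continuous maps. First I would verify that $\theta_\tau$ is well-defined: given $(\mathbb{R}\mathbb{P}^1; x_\bullet) \in R_\tau$, the choice of coordinates with $x_{n+1} = \infty$ is unique up to an affine transformation $x \mapsto ax + b$, and by Remark~\ref{rmk:distances-to-trees}(\ref{rmk-item:tau-little-inv-under-pos-affine}) the sign of $a$ is pinned down (up to flipping at the root, which is the single flippable vertex) by the requirement that the distance algorithm produce $\tau$ rather than $\rflip\tau$. Each ratio $\frac{x_{j_v} - x_{i_v}}{x_{j_u} - x_{i_u}}$ appearing in \eqref{eqn:refined-cell-to-I-Eint} is scale- and translation-invariant, hence independent of the choice of coordinates. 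I would also note that the choice of the successive leaf pair $i_v, j_v$ does not matter: since $v$ is the nearest common ancestor of $i_v$ and $j_v$, the distance algorithm guarantees $x_{j_v} - x_{i_v}$ is the same (namely $\prod_e r_e$ over the path from the root to $v$) for any valid choice, because all such gaps were created at the same stage of the algorithm. Finally one checks each ratio lies in $(0,1)$: since $u$ is the parent of $v$, the gap at $v$ was created at a strictly later stage of the distance algorithm than the gap at $u$, so it is strictly smaller, and both are positive.

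Next I would check that $\varphi_\tau$ lands in $R_\tau$. Given $(r_e) \in (0,1)^{E_{\Int}(\tau)}$, the points $x_i$ defined by \eqref{eqn:def-varphi-tau} are strictly increasing (each successive difference is a product of numbers in $(0,1]$, hence positive), so $(\mathbb{R}\mathbb{P}^1; x_\bullet)$ is a genuine point of $M_{0,n+1}(\mathbb{R})$ with the $x_i$ ordered according to the left-to-right leaf order of $\tau$. Then I would argue that running the distance algorithm on the successive differences $d_i = x_i - x_{i-1}$ recovers $\tau$: the value $d_i = \prod_{e \in \mathrm{path}(v_i)} r_e$ depends only on the nearest common ancestor $v_i$ of the $i$-th and $(i{-}1)$-st leaves, and since each $r_e \in (0,1)$, deeper common ancestors (farther from the root) give strictly smaller products — more precisely, $v_i \le v_j$ in the partial order \eqref{eqn:rtree-partial-order} forces $\mathrm{path}(v_i) \subseteq \mathrm{path}(v_j)$ hence $d_i \ge d_j$, with equality iff the paths coincide iff $v_i = v_j$. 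This is exactly the characterizing property of $\tau^{\mathrm{dist}}$ recalled right after Definition~\ref{def:dist-alg-diff-to-rtree}, so $\tau^{\mathrm{dist}}(\sigma, \vec d) = \tau$ and thus $\varphi_\tau(r_\bullet) \in R_\tau$.

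Then it remains to verify $\theta_\tau \circ \varphi_\tau = \mathrm{id}$ and $\varphi_\tau \circ \theta_\tau = \mathrm{id}$. For the first, starting from $(r_e)$, the normalization $x_1 = 0$, $x_{n+1} = \infty$ is already the one used by $\theta_\tau$ (the distance algorithm produces $\tau$, not $\rflip\tau$, as just shown), so I compute $\frac{x_{j_v} - x_{i_v}}{x_{j_u} - x_{i_u}} = \frac{\prod_{e \in \mathrm{path}(v)} r_e}{\prod_{e \in \mathrm{path}(u)} r_e} = r_{(u,v)}$ since $\mathrm{path}(v) = \mathrm{path}(u) \cup \{(u,v)\}$ when $u$ is the parent of $v$. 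For the converse, starting from $(\mathbb{R}\mathbb{P}^1; x_\bullet) \in R_\tau$ normalized so $x_1 = 0$, $x_{n+1} = \infty$, I set $r_e = \theta_\tau(x_\bullet)_e$ and check by induction on $i$ that $\varphi_\tau(r_\bullet)$ recovers $x_i$: the successive difference $x_i - x_{i-1}$ equals $\prod_{e \in \mathrm{path}(v_i)} r_e$ telescoping through the definitions of the $r_e$ along the path, using again that gaps at a common vertex are all equal. Continuity of both maps is immediate since all the formulas are rational with non-vanishing denominators on the relevant open sets. I expect the only real subtlety — the ``main obstacle'' — is bookkeeping the flip ambiguity at the root carefully enough to be sure $\theta_\tau$ is genuinely well-defined on $R_\tau$ (as opposed to on a connected double cover of it), i.e. that the phrase ``such that the distance algorithm produces $\tau$ rather than $\rflip\tau$'' really does single out a unique normalization class; everything else is a routine telescoping computation.
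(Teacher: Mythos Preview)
Your proposal is correct and follows essentially the same approach as the paper: verify $\theta_\tau$ is well-defined (ratios lie in $(0,1)$, independent of the choice of $i_v,j_v$, invariant under the coordinate ambiguity), check $\varphi_\tau$ lands in $R_\tau$ via the distance algorithm, then show they are mutual inverses. The one place the paper is slightly cleaner is the root-flip ambiguity you flag as the ``main obstacle'': rather than arguing that the condition ``produces $\tau$ not $\rflip\tau$'' singles out a unique orientation (which fails when $\tau=\rflip\tau$), the paper simply observes that flipping at the root reverses the order of the $x_i$ and hence negates both numerator and denominator of each ratio, so $\theta_\tau$ is flip-invariant regardless.
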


    \begin{proof}
        We first check that $\theta_{\tau}$ is well defined. The ratios given in the map are $> 0$ because the $x_i$ are distinct; and are $< 1$ because, under the distance algorithm, vertices closer to the root correspond to strictly larger distances. Furthermore, these ratios are independent of the choices of $(i_v: v\in V_{\Int}(\tau))$, and invariant under coordinate changes preserving $x_{n+1} = \infty$ and the orientation. They are also invariant under flipping $\tau$ at the root: this reverses the order of the $x_i$ and so negates both the numerator and denominator. Continuity is immediate. 
        
        By running the distance algorithm (Algorithm~\ref{def:dist-alg-diff-to-rtree}), we see that $\varphi_{\tau}$ takes values in $ R_{\tau}$ and so is well-defined.
        
        Finally, it is straightforward to check that $\theta_{\tau},\varphi_{\tau}$ are bijective inverses.
    \end{proof}

    \begin{figure}
        \centering
        \begin{tikzpicture}[xscale=1.4, yscale=1.4]
            \draw[red] (-0.5,0) -- (7,0) node[anchor = west] {$\color{black}\R$};
            \node[circle, draw, fill, inner sep =1pt, label = { 90: $x_1=0$ }] (x1) at (0,0) {};
            \node[circle, draw, fill, inner sep =1pt, label = { 90: $x_2=1$ }] (x2) at (1,0) {};
            \node[circle, draw, fill, inner sep =1pt, label = { 90: $x_3=2$ }] (x3) at (2,0) {};
            \node[circle, draw, fill, inner sep =1pt, label = { 90: $x_4=4$ }] (x4) at (4,0) {};
            \node[circle, draw, fill, inner sep =1pt, label = { 90: $x_5=5$ }] (x5) at (5,0) {};
            \node[circle, draw, fill, inner sep =1pt, label = { 90: $x_6=6.5$ }] (x6) at (6.5,0) {};
            \node[circle, draw, fill, inner sep =1pt] (x123) at (1,-1) {};
            \node[circle, draw, fill, inner sep =1pt] (x45) at (4.5,-1) {};
            \node[circle, draw, fill, inner sep =1pt] (x456) at (5.5,-2) {};
            \node[circle, draw, fill, inner sep =1pt] (x123456) at (3.25,-4) {};
        
            \draw[] (x1) -- (x123);
            \draw[] (x2) -- (x123);
            \draw[] (x3) -- (x123);
            \draw[] (x4) -- (x45);
            \draw[] (x5) -- (x45);
            \draw[] (x45) -- node[midway, above right]{$e_3$} (x456);
            \draw[] (x6) --  (x456);
            \draw[] (x123) -- node[midway, above right]{$e_1$} (x123456);
            \draw[] (x456) -- node[midway, above left]{$e_2$} (x123456);
            
    \end{tikzpicture}
        \caption{A tree $\tau$ (where only the root is flippable) with its three internal edges labelled as $e_1, e_2, e_3$; together with a stable curve $(\R\Pj^1; x_{\bullet})\in R_{\tau}$. It follows that $\theta_{\tau}(\R\Pj^1; x_{\bullet})= (r_e)$, where $r_{e_1}=\frac{x_2-x_1}{x_4-x_3}=\frac{x_3-x_2}{x_4-x_3}=\frac{1}{2}$; $r_{e_2}=\frac{x_6-x_5}{x_4-x_3}=\frac{3}{4}$; and $r_{e_3}=\frac{x_5-x_4}{x_6-x_5}=\frac{2}{3}$.
        }
        \label{fig:eg-of-open-cube-homeo-theta-tau}
    \end{figure}
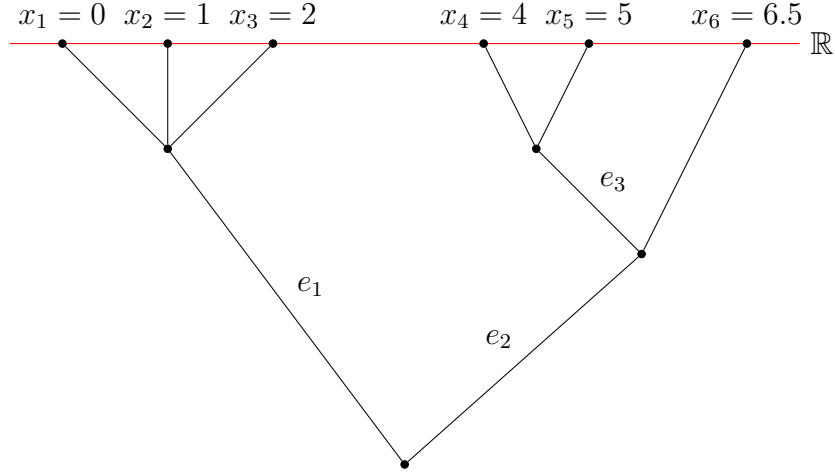

    We next consider an arbitrary refined tree $\tau\in \RStRtree([n])$. Let $v_0, v_1,\ldots, v_k$ be the distinct flippable vertices of $\tau$ and $v_0$ as the root. We associate to $\tau$ a sequence of refined stable trees $\tau_i \in \RStRtree(A_i)$, where
    \begin{itemize}
        \item the $A_i$ partition the set $[n]\coprod \{v_1, \ldots, v_k\}$;
        \item for each $i$, $\tau_i$ is formed by taking the subtree of $\tau$ rooted at $v_i$ and successively replacing each $v_j$ present in that subtree (along with the subtree rooted at $v_j$) by a leaf labelled $v_j$;
        \item each $\tau_i$ has only its root marked as flippable.
    \end{itemize}

    \begin{lem}\label{lem:little-cube-incl-boundary-homeo-to-cube}
        Let $\tau \in \RStRtree([n])$. Let $v_0, \ldots, v_k$ and $\tau_0, \ldots, \tau_k$ be defined as above. Then we have a homeomorphism $R_\tau \cong \prod_i R_{\tau_i}.$

        In particular, let $E \subseteq E_{\Int}(\tau)$ be the set of internal edges whose child vertex is not flippable. Let $E_i = E \cap E_{\Int}(\tau_i)$. Then we have homeomorphisms
        $$\theta_{\tau}= \prod_{i=0}^k \theta_{\tau_i}: R_{\tau}\to (0,1)^E\text{ and } \varphi_{\tau}=\prod_{i=0}^k \varphi_{\tau_i}:(0,1)^E \to R_{\tau},$$
        which are inverses of each other.
    \end{lem}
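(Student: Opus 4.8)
The plan is to reduce the general case to the ``only the root is flippable'' case already handled by Lemma~\ref{lem:little-cube-interior-only-homeo-to-cube}, by showing that a point of $R_\tau$ is exactly the data of a compatible choice of one point in each $R_{\tau_i}$. First I would make precise the combinatorial bookkeeping: given $(C; x_\bullet)$ with $\littlecell(C; x_\bullet) = \tau$, the flippable vertices $v_0, \ldots, v_k$ are (by the construction of $\littlecell$ on the boundary) exactly the roots of the component subtrees, i.e. $v_i$ corresponds to an irreducible component $C_i \subseteq C$. The tree $\tau_i$ is then, by definition, the rooted tree $\littlecell(C_i)$ of that single component together with its marked points and nodes -- where the node $q(C_i)$ pointing back toward the root has been sent to $\infty$ and is \emph{not} recorded as a leaf, while each node where a further component is attached becomes a leaf labelled $v_j$. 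So I would simply define the map $R_\tau \to \prod_i R_{\tau_i}$ by $(C; x_\bullet) \mapsto (C_i; \text{marked points and nodes of } C_i)_{i}$, using rooted coordinates (Definition~\ref{def:rooted-coordinates}) on each $C_i$; and define the inverse by gluing: take the curves $(C_i; y_\bullet^{(i)}) \in R_{\tau_i}$, and attach them according to how the $v_j$-leaves sit inside each $\tau_i$, placing the attaching node of $C_j$ at the point of $C_i$ labelled $v_j$. That these are mutually inverse is immediate from the fact that a stable curve is determined by its components together with the gluing data, and that $\littlecell$ was \emph{defined} by running the single-component distance algorithm on each $C_i$ and then assembling.

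Next I would record the effect on coordinates. Under the identification above, $\theta_\tau$ from \eqref{eqn:refined-cell-to-I-Eint} records, for each internal edge $(u,v)$ with $v$ non-flippable, the ratio $(x_{j_v} - x_{i_v})/(x_{j_u} - x_{i_u})$. The key observation is that both $i_v, j_v$ and $i_u, j_u$ can be chosen to be leaves of a single component subtree: an edge $e \in E$ (child non-flippable) lies in the subtree $\tau_i$ whose root is the flippable vertex above it, and the nearest common ancestor of the chosen leaves of $e$ and of its parent edge both lie in that same $\tau_i$; moreover the leaves of $\tau_i$ that are labelled by marked points of $C_i$ suffice, or if not, one uses that $\theta_{\tau_i}$ as defined in Lemma~\ref{lem:little-cube-interior-only-homeo-to-cube} already makes sense on $\Rtree(A_i)$ with $A_i$-leaves (some labelled $v_j$), and its ratios only involve positions of leaves of $C_i$ in its own rooted coordinates. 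Hence the $e$-coordinate of $\theta_\tau$ depends only on the component $C_i$ with $e \in E_{\Int}(\tau_i)$, and agrees with the $e$-coordinate of $\theta_{\tau_i}$. This gives $\theta_\tau = \prod_i \theta_{\tau_i}$ under the product homeomorphism, and dually $\varphi_\tau = \prod_i \varphi_{\tau_i}$ since \eqref{eqn:def-varphi-tau} builds the $x_i$ on each component independently (a product of edge-ratios along a root-to-ancestor path only ever traverses edges of one $\tau_i$, the ``jumps'' across flippable vertices being exactly where a new component begins and the running product resets to $1$, matching the ``$x_i - x_{i-1} = 1$ when the common ancestor is the root of a component'' convention). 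Continuity of both maps and of their inverses is inherited from Lemma~\ref{lem:little-cube-interior-only-homeo-to-cube} applied to each factor, plus continuity of gluing of pointed curves in $\DMumford{n+1}(\R)$.

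The main obstacle, I expect, is the bookkeeping around \emph{flips}: a point of $R_\tau$ is an equivalence class of curves modulo flips at the flippable vertices $v_0, \ldots, v_k$, and correspondingly each $R_{\tau_i}$ is taken modulo a flip at the root of $\tau_i$. I would need to check that the flip-action on $R_\tau$ is, under the component decomposition, precisely the product of the independent root-flips on the $R_{\tau_i}$ -- in particular that flipping at $v_i$ on $C$ corresponds to flipping $C_i$ (in its rooted coordinates) while leaving the other components alone, which is true because the flip at $v_i$ in Definition of $\rflip$ reverses the ordering of children throughout the subtree rooted at $v_i$, but on the \emph{curve} side a flip of $C_i$ in rooted coordinates ($q(C_i) = \infty$ fixed) is an orientation-reversing affine map of that $\R\Pj^1$, which by Remark~\ref{rmk:distances-to-trees}(\ref{rmk-item:tau-little-inv-under-pos-affine}) induces exactly the root-flip of $\littlecell(C_i) = \tau_i$ and does nothing to the attached components (which were coordinatized separately). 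Once this compatibility of the flip-equivalences is in hand, the maps descend to the quotients and the lemma follows; the dimension/codimension count then drops out of $\dim R_\tau = \sum_i \dim R_{\tau_i} = \sum_i |E_i| = |E|$ combined with Lemma~\ref{lem:little-cube-interior-only-homeo-to-cube}.
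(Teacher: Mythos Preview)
Your proposal is correct and follows the same approach as the paper: reduce to Lemma~\ref{lem:little-cube-interior-only-homeo-to-cube} by using that $\littlecell$ on the boundary is, by definition, computed component-by-component in rooted coordinates. The paper's own proof is a single sentence to this effect, so your write-up simply unpacks the bookkeeping (the component decomposition, the identification of $E$ with $\coprod_i E_i$, and the flip-compatibility) that the paper leaves implicit; one small point worth tightening is that a tree-flip at $v_i$ also reverses descendant subtrees $\tau_j$, but since those $v_j$ are themselves flippable the two generating sets for the flip-equivalence agree.
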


    \begin{proof}
        This follows from Lemma~\ref{lem:little-cube-interior-only-homeo-to-cube} and the definition of $\littlecell$ on the boundary of the moduli space discussed in Section~\ref{subsec:marked-curves-to-trees}. 
    \end{proof}

    We next consider closures of the refined cells $R_\tau$. We will use the partial order $\succeq$ on $\RStRtree([n])$ defined as the transitive closure of the following covering relation.
    
    Let $\tau, \tau'\in \RStRtree([n])$. We say $\tau$ \emph{covers} or \emph{borders} $\tau'$
     
    if $\tau'$ is formed from $\tau$ by either contracting an internal edge $(u,v)$ where $v$ is not flippable; or by marking exactly one additional internal vertex as flippable. By Lemma~\ref{lem:little-cube-incl-boundary-homeo-to-cube}, 
    $$\dim R_{\tau} - \dim R_{\tau'} =1.$$
    
    If $\dim R_{\tau_1} = \dim R_{\tau''}$, we say that $\tau$ and $\tau''$ are \textit{adjacent} if there is some $\tau'\in \RStRtree([n])$ covered by both $\tau$ and $\tau''$. Such a $\tau'$ is unique if it exists and we call it the \textit{border between $\tau$ and $\tau''$}.
    
    We now consider closed refined cubes. 

    \begin{prop}
        The map $\varphi_\tau$ of Lemma~\ref{lem:little-cube-interior-only-homeo-to-cube} extends to a homeomorphism $[0, 1]^E \to \overline{R_\tau}$.
    \end{prop}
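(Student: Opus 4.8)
The plan is to define the extension by the same formula \eqref{eqn:def-varphi-tau}, reinterpreted along the boundary faces of the cube, and to prove it is a homeomorphism by a compactness argument. Given $\vec r = (r_e)_{e\in E}\in[0,1]^E$, set $S_0 = \{e\in E: r_e = 0\}$ and $S_1 = \{e\in E : r_e = 1\}$, and let $\tau'(\vec r)\in\RStRtree([n])$ be obtained from $\tau$ by marking the child vertex of each edge in $S_0$ as flippable and contracting each edge in $S_1$. Since the edges of $E$ have non-flippable children, this contraction is unambiguous and commutes over $S_1$; one checks directly that $\tau'(\vec r)$ is again a refined stable tree with root flippable and with $E(\tau'(\vec r)) = E\setminus(S_0\sqcup S_1)$. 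We then define
\[
\overline{\varphi_\tau}(\vec r) := \varphi_{\tau'(\vec r)}\bigl((r_e)_{e\in E(\tau'(\vec r))}\bigr)\ \in\ R_{\tau'(\vec r)},
\]
using Lemma~\ref{lem:little-cube-incl-boundary-homeo-to-cube}; for $\vec r\in(0,1)^E$ this agrees with $\varphi_\tau$. Concretely, $\overline{\varphi_\tau}(\vec r)$ is the stable curve whose marked points on each component are placed according to the products in \eqref{eqn:def-varphi-tau}, with a vanishing coordinate $r_e$ signalling that the marked points below the child of $e$ have bubbled off onto a new component.

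The verification proceeds in four steps. \emph{Continuity:} away from the faces $\{r_e\in\{0,1\}\}$ this is Lemma~\ref{lem:little-cube-interior-only-homeo-to-cube}; across such a face one checks that the family $\varphi_\tau(\vec r)$ converges in $\DMumford{n+1}(\R)$ to the curve $\overline{\varphi_\tau}$ assigns to the limiting $\vec r$. As $r_e\to 1$ the curve stays in the same topological type, so this is immediate from continuity of \eqref{eqn:def-varphi-tau}; as $r_e\to 0$, the marked points below the child $v$ of $e$ collide at a common rate relative to the rest, and after rescaling those coordinates by the product of the $r_{e'}$ over the edges $e'$ on the path from the root down to $e$ (inclusive), the limiting configuration is exactly the bubble component prescribed by $\overline{\varphi_\tau}$ --- the standard Deligne--Mumford degeneration, made explicit by \eqref{eqn:def-varphi-tau} and matching the computations underlying the dual cube decomposition of $\DMumford{n+1}(\R)$ in \cite{davisJS2003, ilinKLPR2023}. \emph{Injectivity:} if $\overline{\varphi_\tau}(\vec r) = \overline{\varphi_\tau}(\vec r')$, then applying $\littlecell$ to the common value gives $\tau'(\vec r) = \tau'(\vec r')$; since $\tau$ is fixed, this common tree recovers the pair $(S_0,S_1)$ (its internal vertices are $V_{\Int}(\tau)$ minus the children of the $S_1$-edges, and its vertices flippable in $\tau'(\vec r)$ but not in $\tau$ are the children of the $S_0$-edges), so $\vec r$ and $\vec r'$ lie in the same relative-open face and agree on it by injectivity of $\varphi_{\tau'(\vec r)}$. \emph{Image:} $\overline{\varphi_\tau}([0,1]^E)$ is compact, hence closed in $\DMumford{n+1}(\R)$, and it contains $R_\tau$, hence contains $\overline{R_\tau}$; conversely each $\overline{\varphi_\tau}(\vec r)$ is the limit of points $\overline{\varphi_\tau}(\vec r^{(k)})$ with $\vec r^{(k)}\in(0,1)^E$, hence lies in $\overline{R_\tau}$. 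So $\overline{\varphi_\tau}$ is a continuous bijection $[0,1]^E\to\overline{R_\tau}$. \emph{Conclusion:} a continuous bijection from a compact space to a Hausdorff space is a homeomorphism.

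The main obstacle is the continuity across the faces $\{r_e = 0\}$: we must show that the explicit degenerating family of irreducible marked curves with coordinates \eqref{eqn:def-varphi-tau} converges, in the real-analytic topology on $\DMumford{n+1}(\R)$, to the nodal curve assigned by $\overline{\varphi_\tau}$ --- equivalently, that the gap coordinates used here genuinely chart a neighbourhood of the boundary stratum $R_{\tau'(\vec r)}$. This is precisely the point at which we invoke the existing analysis of the dual (``big cube'') decomposition of $\DMumford{n+1}(\R)$ in \cite{davisJS2003, ilinKLPR2023}, of which the present statement is the local model. (Alternatively, one can reduce to the case where only the root of $\tau$ is flippable via Lemma~\ref{lem:little-cube-incl-boundary-homeo-to-cube} and the factorization $[0,1]^E = \prod_i[0,1]^{E_i}$, once one knows $\overline{R_\tau} = \prod_i\overline{R_{\tau_i}}$.)
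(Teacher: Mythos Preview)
Your overall architecture matches the paper's: define the extension face-by-face, establish continuity, then conclude by the compact-to-Hausdorff trick. The difference lies entirely in how the hard step---continuity across the faces $\{r_e=0\}$---is handled. You explicitly flag this as the main obstacle and resolve it by citation to \cite{davisJS2003, ilinKLPR2023}; the paper instead gives a self-contained sketch, working in local divisorial coordinates $x_v$ on $\DMumford{n+1}(\R)$ near the boundary point, expressing the edge-ratios $y'_w$ as rational functions of the $x_v$, and then inverting inductively in a reverse level order on $V_{\Int}(\tau)$ to show that each $x_w/y'_w$ is regular with value $1$ as $(y'_w)\to\vec 0$. Your route is legitimate (the cited papers do establish the cubical decomposition), and your treatment of injectivity and of the image is tidier than the paper's ``straightforward from the recursive structure''; what you lose is self-containment on exactly the point the paper chooses to make explicit. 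If you want to close the gap without the full coordinate calculation, note that your informal sentence about ``rescaling by the product of the $r_{e'}$ along the path'' is the right idea but is not yet a proof of convergence in the moduli-space topology---that is precisely what the paper's $(x_v)\leftrightarrow(y'_w)$ computation supplies.
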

    \begin{proof}
    We first observe that setting a coordinate to 1 corresponds to an edge contraction of the tree produced by the distance algorithm, but does not change the tree of components, so continuity is immediate (working in a chart of $\DMumford{n+1}(\R)$). Thus $\varphi_\tau$ extends to $(0,1]^E \to \overline{R_\tau}$, which remains injective.
    
    In contrast, from the definition of $\littlecell$, setting a coordinate to $0$ causes certain special points on the stable curve to collide and form a new component. Combinatorially, this corresponds to marking a vertex as flippable. Continuity here is not so straightforward, but can be seen essentially by examining the speed of which these marked points collide, i.e. the structure of $\DMumford{n+1}$ as an iterated blowup. Below, we sketch the proof of continuity at the origin $\vec{0} \in [0, 1]^{\dim R_\tau}$, in the case where $\tau$ is a binary tree with only the root marked as flippable. In this case we are approaching the unique stable curve $C$ corresponding to the standard $0$-cell whose tree is $\tau$ (with all internal vertices marked as flippable). The proof of the general case is similar, and the proof of bijectivity is straightforward from the recursive structure of the distance algorithm. Thus, once continuity is established, the map $[0, 1]^E \to \overline{R_\tau}$ is a continuous bijection from a compact space to a Hausdorff space, hence a homeomorphism.
    
    For continuity, with the assumptions above, the point $[C] \in \DMumford{n+1}$ is the transverse intersection of $n-2$ smooth divisors corresponding to the vertices $v \in V_{\Int}(\tau)$ other than the root. For each such $v$, there is a local equation $x_v$ of the corresponding divisor. That is, in a neighborhood of $[C]$, $x_v = 0$ for a stable curve $C'$ if and only if $C'$ has a node separating the leaves below $v \in \tau$ from the remaining leaves. 
    The local ring of $\DMumford{n+1}$ at $[C]$
    is then isomorphic to the localized polynomial ring
    \[
    k[x_v : v \in V_{\Int}(\tau) \text{ is not the root}]_{(x_v \:\ v \text{ is not the root})}.
    \]
    By abuse of notation, for the root vertex $v$ we put $x_v := 1$. For $C' \in M_{0, n+1}$, the relationship between the $x_v$'s and the coordinates $(y_\bullet)$ of the marked points $x_i$ is given as follows. For each $2 \leq i \leq n$, let $v_i \in \tau$ denote the nearest common ancestor of the leaves $i-1$ and $i$. Let $\ell(i)$ be the leftmost leaf of the subtree rooted at $v_i$; note that $\ell(i) \leq i-1$. Then consider the $(n{+}1)$-tuple $(y_\bullet)$ with $y_1 = 0$, $y_{n+1} = \infty$ and, for $2 \leq i \leq n$,
    \[
    y_i = y_{\ell(i)} + \prod_{v \geq v_i} x_v,
    \]
    where $\tau$ is equipped with the partial order~\eqref{eqn:rtree-partial-order}. This product corresponds to the path from the root to $v_i$. See Example~\ref{ex:xv-yv'-bintree-tau}. If all the $x_v$ are positive and sufficiently close to $0$, this gives $(\R\Pj^1, y_\bullet) \in R_\tau$. 

    We now compare with the distance algorithm. For each $w \in V_{\Int}(\tau)$ that is not the root, write $w = v_{i+1}$ and let $v_{j+1}$ be the parent of $w$. Let
    \begin{equation}\label{eqn:yw'-as-rational-fn-of-xv}
        y'_w=\frac{y_{i+1}-y_i}{y_{j+1}-y_j}\in \R(x_v).
    \end{equation}
    For the root $w$, we again put $y'_w := 1$. It immediately follows that $y_{i+1}=y_i+\prod_{v\ge v_{i+1}}y'_v$. Thus, identifying each $w$ with its parent edge, the mapping $\varphi_\tau$ of Equation \eqref{eqn:def-varphi-tau} is identified with $(y'_w)_w \mapsto (y_\bullet)$.
    
    To see that $\varphi_\tau$ extends continuously to $(y'_w)_w = \vec{0}$, we solve for the $x_w$'s in terms of the $y'_w$'s and show that $(y'_w)_w \mapsto (x_v)_v$ is continuous as $(y'_w)_w \to \vec{0}$.
    For $v \in V_{\Int}(\tau)$, let $\rank(v)$ be the number of edges in a longest path from $v$ to any leaf. We extend the partial order \eqref{eqn:rtree-partial-order} to a total order, \emph{reverse level order} $\le_{RL}$ on $V_{\Int}(\tau)$, by letting $v_i\le_{RL} v_j$ if and only if either $\rank(v_i)<\rank(v_j)$ or both $\rank(v_i)=\rank(v_j)$ and $i\le j$. One shows that for each $w\in V_{\Int}(\tau)$ that is not the root, there exist $f,g\in \R[x_v: v<_{RL}w]$ with constant term $1$, such that 
    \begin{equation}\label{eqn:yw-to-f-g-xw}
        y_w' = \begin{cases}
            \frac{x_wf}{1-x_wg},& w\text{ is a left child}\\
            \frac{x_wf}{g},& w\text{ is a right child}.
        \end{cases}
    \end{equation}
    Solving for $x_w$ in Equation~\eqref{eqn:yw-to-f-g-xw} yields: 
    \begin{equation}\label{eqn:xw-to-f-g-yw}
        x_w = \begin{cases}
            \frac{y_w'}{f+y_w'g},& w\text{ is a left child}\\
            \frac{y_w'g}{f},& w\text{ is a right child}.
        \end{cases}
    \end{equation}
    Inductively, $\tfrac{x_w}{y'_w}$ is a regular function of the $y'_v$ with $v \le_{RL} w$, with value $1$, as $(y'_w) \to \vec{0}$.
    \end{proof}

    \begin{exmp}\label{ex:xv-yv'-bintree-tau}
        $$\tau=\vcenter{
        \hbox{
        \begin{tikzpicture}
            \node [circle, draw, fill, inner sep =1pt, label = { 90: 1 }] (node_0_0) at (0,0) {};
            \node [circle, draw, fill, inner sep =1pt, label = { 90: 2 }] (node_1_0) at (1,0) {};
            \node [circle, draw, fill, inner sep =1pt, label = { 90: 3 }] (node_2_0) at (2,0) {};
            \node [circle, draw, fill, inner sep =1pt, label = { 90: 4 }] (node_3_0) at (3,0) {};
            \node [circle, draw, fill, inner sep =1pt, label = { 90: 5 }] (node_4_0) at (4,0) {};
            \node [circle, draw, fill, inner sep =1pt, label = { 90: 6 }] (node_5_0) at (5,0) {};
            \node [circle, draw, fill, inner sep =1pt, label = { 90: 7 }] (node_6_0) at (6,0) {};
            
            \node [circle, draw, fill, inner sep =1pt, label = {0 : $v_3$} ] (node_1'5_-1) at (1.5,-1) {};
            \node [circle, draw, fill, inner sep =1pt, label = {0 : $v_2$} ] (node_1_-2) at (1,-2) {};
            \node [circle, draw, fill, inner sep =1pt, label = {0 : $v_5$} ] (node_3'5_-1) at (3.5,-1) {};
            \node [circle, draw, fill, inner sep =1pt, label = {0 : $v_7$}] (node_5'5_-1) at (5.5,-1) {};
            \node [circle, draw, fill, inner sep =1pt, label = {0 : $v_6$} ] (node_4'5_-2) at (4.5,-2) {};
            \node [circle, draw, fill, inner sep =1pt, label = {90 : $v_4$} ] (node_3_-3) at (3,-3) {};
            \draw[] (node_3_-3) -- (node_1_-2);
            \draw[] (node_3_-3) -- (node_4'5_-2);
            \draw[] (node_1_-2) -- (node_0_0);
            \draw[] (node_1_-2) -- (node_1'5_-1);
            \draw[] (node_4'5_-2) -- (node_3'5_-1);
            \draw[] (node_4'5_-2) -- (node_5'5_-1);
            \draw[] (node_1'5_-1) -- (node_1_0);
            \draw[] (node_1'5_-1) -- (node_2_0);
            \draw[] (node_3'5_-1) -- (node_3_0);
            \draw[] (node_3'5_-1) -- (node_4_0);
            \draw[] (node_5'5_-1) -- (node_5_0);
            \draw[] (node_5'5_-1) -- (node_6_0);
        \end{tikzpicture}
        }
        }\in \Rtree(1,2,3,4,5,6,7)
        $$
        \par The internal vertices, in reverse level order, are $v_3,v_5,v_7, v_2, v_6, v_4$. Table~\ref{tab:calc-xv-yv'-tau-from-ex} expresses the positions of the marked points $y_i$ and edge coordinates $y'_v$ (identifying each $v$ with its parent edge, and setting $y'_v = 1$ for the root), in terms of the local coordinates $x_v$ of $\DMumford{8}(\R)$ near the stable curve $C$ with standard tree $\tau$. 
        Note that, in particular, each $x_w$ is expressed as a function of $y_w'$ and 
        $(x_v: v<_{RL}w).$
        
        \begin{center}
           
            \renewcommand*{\arraystretch}{2}
            \begin{table}[h]
                \centering
                \begin{tabular}{|c|c|c|c|c|} \hline
                     $i$ & $l(i)$ & $y_i$ & $y'_{v_{i}}$& $x_{v_i}$ \\
                     \hline
                     1 & --- & 0 & --- & ---  \\
                     \hline
                     2 & 1 & $x_{v_2}$ & $\dfrac{x_{v_2}}{1-x_{v_2}(1+x_{v_3})}$ & $\dfrac{y'_{v_2}}{1+y'_{v_2}(1+x_{v_3})}$\\[1ex]
                     \hline
                     3 & 2 & $x_{v_2}(1+x_{v_3})$& $x_{v_3}$& $y_{v_3}'$\\
                     \hline
                     4 & 1 & 1& 1&1\\
                     \hline
                     5 & 4 & $1+x_{v_5}x_{v_6}$& $\dfrac{x_{v_5}}{1-x_{v_5}}$ & $\dfrac{y'_{v_5}}{1+y'_{v_5}}$\\[1ex]
                     \hline
                     6 & 4 & $1+x_{v_6}$& $\dfrac{x_{v_6}(1-x_{v_5})}{1-x_{v_2}(1+x_{v_3})}$ & $y'_{v_6}\dfrac{1-x_{v_2}(1+x_{v_3})}{x_{v_6}(1-x_{v_5})} $\\[1ex]
                     \hline
                     7 & 6 & $1+x_{v_6}(1+x_{v_7})$& $\dfrac{x_{v_7}}{1-x_{v_5}}$&$y'_{v_7}(1-x_{v_5})$\\[1ex] \hline
                \end{tabular}
                \caption{Computations for the tree $\tau$ in Example~\ref{ex:xv-yv'-bintree-tau}}
                \label{tab:calc-xv-yv'-tau-from-ex}
            \end{table}
        \end{center}
    \end{exmp}

    Summarizing, we have the following description of the closed refined cubes.

    \begin{thm}\label{thm:WLtau-extend-along-border}
        Let $\tau\in \RStRtree([n])$ and $k=\dim R_{\tau}$. Let $E \subseteq E_{\Int}(\tau)$ be the set of edges $e$ whose child vertex is not flippable. Then the homeomorphism $\theta_{\tau}$ extends to a homoeomorphism 
        $$\overline{\theta_{\tau}}: \overline{R_\tau} = \coprod_{\tau'\preceq \tau} R_{\tau'}\to [0,1]^E.$$
        Explicitly, if $\tau' \preceq \tau \in \RStRtree([n])$, let
        $$ t_e=
        \begin{cases}
            1, & \text{if $e$ is contracted in the process of forming $\tau'$ from $\tau$,}\\
            0, & 
            \text{if the child vertex of $e$ is marked flippable in $\tau'$.}
        \end{cases}
        $$
        Then we have
        \[
        \overline{\theta_\tau}(R_{\tau'}) = (0, 1)^{E'} \times \prod_{e \in E \setminus E'} \{t_e\},
        \]
        where $E' \subseteq E$ is the set of edges remain in $\tau'$, with non-flippable child vertex.

    \end{thm}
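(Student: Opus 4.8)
The plan is to deduce the statement from the preceding proposition, which already supplies a homeomorphism $\varphi_\tau\colon [0,1]^E\to\overline{R_\tau}$ extending the map of Equation~\eqref{eqn:def-varphi-tau}; the remaining work is combinatorial, namely to match the stratification of the cube $[0,1]^E$ by relatively open faces with the decomposition $\coprod_{\tau'\preceq\tau}R_{\tau'}$ and to pin down the recipe for $t_e$. Accordingly I would set $\overline{\theta_\tau}:=\varphi_\tau^{-1}$. By Lemmas~\ref{lem:little-cube-interior-only-homeo-to-cube} and~\ref{lem:little-cube-incl-boundary-homeo-to-cube}, $\theta_\tau=\varphi_\tau^{-1}$ already holds on the open cube $(0,1)^E=\varphi_\tau^{-1}(R_\tau)$, so $\overline{\theta_\tau}$ is a homeomorphism $\overline{R_\tau}\to[0,1]^E$ extending $\theta_\tau$.

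Next I would set up the combinatorial correspondence. The relatively open faces of $[0,1]^E$ are indexed by pairs $(S_0,S_1)$ of disjoint subsets of $E$, with $F_{(S_0,S_1)}=\{t: t_e=0 \text{ for } e\in S_0,\ t_e=1 \text{ for } e\in S_1,\ t_e\in(0,1)\text{ otherwise}\}$. To such a pair I would associate the tree $\tau'$ obtained from $\tau$ by first contracting the edges of $S_1$ (each has a non-flippable child, so the contraction is unambiguous, and remains so after the other contractions) and then marking the child vertices of the edges of $S_0$ as flippable (these vertices survive in $\tau/S_1$ since $S_0\cap S_1=\varnothing$, and remain non-flippable and internal). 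Each such step is one of the covering moves defining $\preceq$, so $\tau'\preceq\tau$; and since contractions never make a vertex flippable while markings only enlarge the flippable set, contractions and markings pairwise commute and $\tau'$ is independent of the order of operations.

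I would then check that $(S_0,S_1)\mapsto\tau'$ is a bijection onto $\{\tau':\tau'\preceq\tau\}$. Surjectivity: given $\tau'\preceq\tau$, write it as a sequence of covering moves and reorder so that all contractions come first; the contracted edges form a subset $S_1\subseteq E$, and each internal non-root vertex of $\tau/S_1$ that gets marked flippable is the child of a unique edge of $\tau$, necessarily lying in $E\setminus S_1$, giving $S_0$. Injectivity is clear since $S_1$ and $S_0$ are read off from $\tau'$. Moreover, by Proposition~\ref{prop:big-cube-hoemo-to-prod-intervals}, $\dim R_{\tau'}$ is the number of internal edges of $\tau'$ with non-flippable child, which is $|E\setminus(S_0\cup S_1)|=\dim F_{(S_0,S_1)}$; writing $E'=E\setminus(S_0\cup S_1)$, this is the set of edges that remain in $\tau'$ with non-flippable child.

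The main obstacle is the last identification, $\varphi_\tau(F_{(S_0,S_1)})=R_{\tau'}$, which I would read off from formula~\eqref{eqn:def-varphi-tau}. For $r\in F_{(S_0,S_1)}$, imposing $r_e=1$ for $e\in S_1$ simply deletes the factor $r_e$ from every product $x_i-x_{i-1}=\prod r_e$, so two gaps differing only by whether their path crosses $e$ become equal; in the distance algorithm this merges the endpoints of $e$, i.e.\ contracts $e$, while leaving the tree of components unchanged. Letting $r_e\to 0$ for $e\in S_0$ forces every gap whose nearest common ancestor lies weakly below the child $v$ of $e$ to vanish relative to those above, so the subtree rooted at $v$ bubbles off as a new component, i.e.\ $v$ becomes flippable. (Continuity and bijectivity of the resulting extension are exactly what the preceding proposition establishes; here it is used only to identify strata.) Thus $\varphi_\tau$ carries $F_{(S_0,S_1)}$ onto the locus of curves with refined tree $\tau'$, which is $R_{\tau'}$. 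Since $\varphi_\tau$ is a homeomorphism and the $R_{\tau'}$ are pairwise disjoint, this yields $\overline{R_\tau}=\coprod_{\tau'\preceq\tau}R_{\tau'}$ together with $\overline{\theta_\tau}(R_{\tau'})=(0,1)^{E'}\times\prod_{e\in E\setminus E'}\{t_e\}$, where $t_e=1$ for the contracted edges and $t_e=0$ for those whose child is marked flippable, as claimed.
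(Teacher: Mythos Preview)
Your proposal is correct and matches the paper's approach. The paper presents Theorem~\ref{thm:WLtau-extend-along-border} as a summary of the preceding proposition (``Summarizing, we have the following description\ldots'') without a separate proof; you correctly take $\overline{\theta_\tau}=\varphi_\tau^{-1}$ and make explicit the face-to-tree bijection that the paper leaves implicit in its remarks that ``setting a coordinate to $1$ corresponds to an edge contraction'' and ``setting a coordinate to $0$\ldots corresponds to marking a vertex as flippable.''
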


    Finally, we describe how the refined cubes $R_\tau$ merge into dual cubes $W_\tau$. Let $\tau, \tau'\in \RStRtree([n])$ be adjacent with border $\tau''$. Then we have $W_{\tau} = W_{\tau'}$ if and only if $\tau''$ is formed from $\tau$ by marking a vertex as flippable and $\tau'$ is obtained by flipping $\tau$ at that vertex. It follows that the restrictions of $\overline{\theta_\tau}$ and $\overline{\theta_{\tau'}}$ agree on $\overline{R_{\tau''}}$, so we may glue the homeomorphisms $\overline{\theta_\tau}$ and $\overline{\theta_{\tau'}}$. It is convenient to do so by formally negating the corresponding edge coordinate.

    \begin{thm}\label{thm:cl-dual-cell-homeo-to-pm1-to-E}
        Let $\tau\in \StRtree([n])$. There are compatible homeomorphisms
        \[
        \theta_\tau : W_\tau \to (-1, 1)^{E_{\Int}(\tau)}, \qquad
        \overline{\theta_{\tau}}: \overline{W_{\tau}} \to[-1,1]^{E_{\Int}(\tau)}.
        \]
        More explicitly, let $\mcT \subseteq \RStRtree([n])$ be the preimage of $\tau$ under the map \eqref{eqn:map-dual-rst-st-tree} marking all vertices flippable, and fix $\tau' \in \mcT$ with all internal edges non-flippable. Then $\overline{\theta_\tau}$ can be chosen to map $\overline{R_{\tau'}}$ to the first quadrant, extending the map $\overline{\theta_{\tau'}}$ of Theorem~\ref{thm:WLtau-extend-along-border}. Flipping a vertex of $\tau'$ negates the corresponding edge coordinate, and setting a coordinate to $0$ corresponds to marking a vertex as flippable; in particular
        \[
        W_\tau = \coprod_{\tau' \in \mcT} R_{\tau'}.
        \]
    \end{thm}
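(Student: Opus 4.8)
The plan is to assemble $\overline{\theta_\tau}$ by gluing together the homeomorphisms of Theorem~\ref{thm:WLtau-extend-along-border}, one for each of the finitely many top-dimensional refined cubes contained in $W_\tau$, matching them along their shared faces after inserting sign changes on the cube coordinates; restricting the result to interiors then yields $\theta_\tau$.

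First I would record the set-theoretic decomposition. For any $(C;x_\bullet)$, the stable tree $\bigcell(C;x_\bullet)$ is by definition the image of the refined stable tree $\littlecell(C;x_\bullet)$ under the map~\eqref{eqn:map-dual-rst-st-tree} marking every vertex flippable, so $\bigcell(C;x_\bullet)=\tau$ if and only if $\littlecell(C;x_\bullet)\in\mcT$; since the cells $R_{\tau'}$ are the fibres of $\littlecell$ and partition $\overline{M_{0,n+1}}(\R)$, this gives $W_\tau=\coprod_{\tau'\in\mcT}R_{\tau'}$. Next I would single out the top-dimensional pieces. The internal vertices of $\tau$ span a subtree, so $|E_{\Int}(\tau)|=|V_{\Int}(\tau)|-1$, and by the dimension formula in Proposition~\ref{prop:big-cube-hoemo-to-prod-intervals} a cell $R_{\tau'}$ with $\tau'\in\mcT$ has dimension $|E_{\Int}(\tau)|$ precisely when no non-root vertex of $\tau'$ is flippable, that is, when $\tau'$ is an ordered-tree representative of $\tau$. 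Via the bijection sending an internal edge $e$ to its child vertex, these representatives are indexed by subsets $S\subseteq E_{\Int}(\tau)$: let $\tau'_\varnothing$ be the distinguished representative $\tau'$ fixed in the statement (the one compatible with the distance algorithm), and let $\tau'_S$ be obtained from $\tau'_\varnothing$ by flipping at the child vertex of each $e\in S$. On $\overline{R_{\tau'_S}}$ I would define $\overline{\theta_\tau}$ to be $\overline{\theta_{\tau'_S}}$ post-composed with the coordinatewise sign change $(t_e)_e\mapsto(\varepsilon_e t_e)_e$, where $\varepsilon_e=-1$ for $e\in S$ and $\varepsilon_e=1$ otherwise; this lands in the closed orthant $Q_S\subseteq[-1,1]^{E_{\Int}(\tau)}$ on which $t_e\le 0$ for $e\in S$ and $t_e\ge 0$ for $e\notin S$.

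The crux is to check that these piecewise definitions agree on overlaps, for which I would lean on the discussion immediately preceding the theorem. Given $S,S'\subseteq E_{\Int}(\tau)$, the intersection $\overline{R_{\tau'_S}}\cap\overline{R_{\tau'_{S'}}}$ is a union of closed refined cubes $\overline{R_{\tau'''}}$ with $\tau'''\in\mcT$; on each such cube, Theorem~\ref{thm:WLtau-extend-along-border} together with the flip-invariance built into the maps $\overline{\theta_{(-)}}$ identifies both $\overline{\theta_{\tau'_S}}$ and $\overline{\theta_{\tau'_{S'}}}$ with the face-inclusion of the same map $\overline{\theta_{\tau'''}}$, any sign discrepancy occurring only in coordinates $t_e$ that vanish identically on $\overline{R_{\tau'''}}$ (namely those $e$ whose child vertex is flippable in $\tau'''$), so the two signed maps agree there. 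The key special case is $S'=S\triangle\{e\}$: then $\tau'_{S'}$ is the flip of $\tau'_S$ at the child vertex $w$ of $e$, the cells $R_{\tau'_S}$ and $R_{\tau'_{S'}}$ are adjacent with border the refined tree $\tau''$ formed by marking $w$ flippable, and since $\bigcell$ is flip-invariant we have $W_{\tau'_S}=W_{\tau'_{S'}}=W_\tau$, exactly as in the discussion before the theorem. By the pasting lemma for the finite closed cover $\{\overline{R_{\tau'_S}}\}_{S\subseteq E_{\Int}(\tau)}$ of $\overline{W_\tau}$ (a cover, since every element of $\mcT$ lies $\preceq$ some $\tau'_S$ by Theorem~\ref{thm:WLtau-extend-along-border}), the glued map $\overline{\theta_\tau}:\overline{W_\tau}\to[-1,1]^{E_{\Int}(\tau)}$ is well-defined and continuous. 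It is surjective since the orthants $Q_S$ cover the cube, and injective since distinct cells $R_{\tau'}$ with $\tau'\in\mcT$ are disjoint and map onto the distinct relatively-open strata cut out by the sign and zero patterns described in the statement; as $\overline{W_\tau}$ is compact and the cube Hausdorff, $\overline{\theta_\tau}$ is a homeomorphism, and restricting to interiors gives $\theta_\tau:W_\tau\to(-1,1)^{E_{\Int}(\tau)}$. I expect the main obstacle to be precisely this overlap compatibility — at bottom, the assertion that reversing the order of the marked points below $w$ negates exactly the cube coordinate $t_e$ and fixes all the others — after which the theorem follows by combinatorial bookkeeping with $\mcT$ and the usual compact-to-Hausdorff argument.
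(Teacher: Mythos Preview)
Your proposal is correct and follows essentially the same approach as the paper, which in fact does not give a separate proof of this theorem but only the short paragraph immediately preceding it: glue the closed refined-cube homeomorphisms $\overline{\theta_{\tau'}}$ from Theorem~\ref{thm:WLtau-extend-along-border} across their shared borders, formally negating the edge coordinate corresponding to each flip. You have simply supplied the bookkeeping (indexing the $2^{|E_{\Int}(\tau)|}$ top-dimensional refined cubes by subsets $S\subseteq E_{\Int}(\tau)$, invoking the pasting lemma, and closing with the compact-to-Hausdorff argument) that the paper leaves implicit.
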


    \subsubsection{Cubes in the double cover $\overline{\DC{M}_{0,n+1}}(\R)$ and the cactus presentation of $J_n$}\label{subsec:cubes-in-2:1-cover-unweighted}

    The cell decomposition of $\DMumford{n+1}(\R)$ lifts straightforwardly to the double cover $\overline{\DC{M}_{0,n+1}}(\R)$.
    We define $\DC{\RStRtree}([n])$ and $\DC{\StRtree}([n])$ analogously to $\RStRtree([n])$ and $\StRtree([n])$, respectively, where the root is \textit{never} marked as flippable. The remainder of the discussion proceeds analogously to the above. 

    As discussed in Section \ref{subsec:G-eq-pi1}, $S_n$ acts simply transitively on the $0$-cells of the dual decomposition of the double cover $\overline{\DC{M}_{0,n+1}}(\R)$. As such, a presentation of the oriented cactus group $\tilde J_n$ (and hence of the ordinary cactus group) can be obtained by examining the $2$-skeleton as in Theorem~\ref{thm:pres-pi1GX-simply-transitive}. We briefly discuss this calculation.

    Let $x_0\in \DC{M}_{0,n+1}(\R)$ be the oriented permutation point in which the $i$-th marked point is $i$ 
    
    for $1\le i\le n$, and $x_{n+1} = \infty$. The generators in the presentation are given by the $1$-cells with one end attached to $x_0$. These are attached to $w_{p,q} x_0$ on the other end, for some $1 \leq p < q \leq n$ with $(p,q)\neq (1,n)$. Such a 1-cell corresponds to a tree with a single internal edge, whose child subtree contains the $p$-th to $q$-th leaves. We write $s_{p, q}$ for the corresponding generator. See Figure~\ref{fig:1-cell-tree-of-moduli}.

    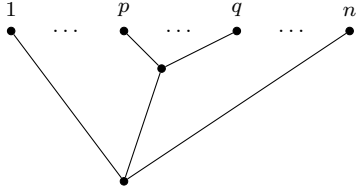
\begin{figure}
        \centering

    \begin{tikzpicture}
            \node [circle, draw, fill, inner sep =1pt, label = { 90: \tiny$1$ }] (node_0'5_-1) at (2.5,0) {};
            \node [circle, draw, fill, inner sep =1pt, label = { 90: \tiny{$p$} }] (node_11_0) at (4,0) {};
            \node [circle, draw, fill, inner sep =1pt, label = { 90: \tiny{$q$} } ] (node_12'5_-1) at (5.5,0) {};
            \node [circle, draw, fill, inner sep =1pt, label = { 90: \tiny$n$ } ] (node_14'5_-1) at (7,0) {};

            \node [circle, draw, fill, inner sep =1pt, ] (node_mrint) at (4.5,-0.5) {};
            \node [circle, draw, fill, inner sep =1pt, ] (node_7'5_-2) at (4,-2) {};

            \draw[draw=none] (node_0'5_-1) -- node[midway, sloped]{{\tiny\ldots}} (node_11_0);
            \draw[draw=none] (node_11_0) -- node[midway, sloped]{{\tiny\ldots}} (node_12'5_-1);
            \draw[draw=none] (node_12'5_-1) -- node[midway, sloped]{{\tiny\ldots}} (node_14'5_-1);

    
            \draw[] (node_7'5_-2) -- (node_0'5_-1);
            \draw[] (node_7'5_-2) -- (node_14'5_-1);
            \draw[] (node_7'5_-2) -- (node_mrint);
            \draw[] (node_mrint) -- (node_11_0);
            \draw[] (node_mrint) -- (node_12'5_-1);
    \end{tikzpicture}
        \caption{Tree that indexes the the 1-cell connecting $x_0$ and $w_{p,q} x_0$. The internal vertex is flippable.}
        \label{fig:1-cell-tree-of-moduli}
    \end{figure}

    The relations are given by the $2$-cells with closures containing $x_0$, corresponding to trees with exactly two internal edges. There are two kinds of $2$-cell, as illustrated in Figure~\ref{fig:2-cell-trees-of-moduli}. In the first kind, both internal edges are connected to the root. The leaves on the two subtrees are given by two disjoint intervals $m, \ldots, r$ and $p, \ldots, q$ where $1 \leq m < r < p < q \leq n$. The second kind has a pair of nested internal edges. The leaves on the subtrees are therefore described as a pair of nested intervals $p, \ldots, q$ and $m, \ldots, r$ where $1 \leq p \leq m < r \leq q \leq n$. By contracting edges, we see that the first kind gives the commutativity relations $s_{p,q} s_{m,r} = s_{m,r} s_{p,q}$, while the second gives the cactus relation $s_{p,q}s_{m,r}=s_{p+q-r,p+q-m}s_{p,q}$.

    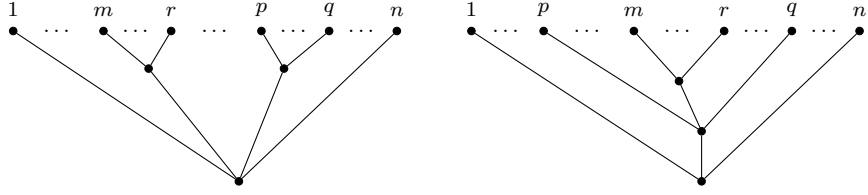
\begin{figure}
        \centering

        \begin{tikzpicture}[xscale=0.6]
        \preambleDisjointUncontractedCell
            \centerTree{$m$}{$r$}{$p$}{$q$}
        \end{tikzpicture}
        \quad 
        \begin{tikzpicture}[xscale=0.6]
        \preambleNestedUncontractedCell
            \centerTree{$p$}{$m$}{$r$}{$q$}
        \end{tikzpicture}

        \caption{Left: Tree corresponding to disjoint intervals $[m,r]$ and $[p,q]$. Right: Tree corresponding to nested intervals $[m,r]\subseteq[p,q]$.}
        \label{fig:2-cell-trees-of-moduli}
    \end{figure}

    To obtain the presentation for the (unoriented) cactus group, we note that the image of $x_0$ in $\overline{M_{0, n+1}}(\mathbb{R})$ has stabilizer $\{\id, w_{1,n}\}$. We accordingly define, as in the setup section,
    $$s_{1, n} := (w_{1,n}, \id)\in \pi_1^{S_n}(\DMumford{n+1}(\mathbb{R}), x_0),$$
    which gives the relation $s_{1, n} s_{p, q} = s_{n+1-q, n+1-p}s_{1,n}$.

    \subsection{Dual Stratification of $\overline{M_{0,\mcA(a)}}(\R)$}\label{subsec:dual-Mod-space-weighted-strat}

    We now generalize the above discussion to the weighted moduli space. The resulting dual cells are indexed by the set of $a$-stable trees, $\StRtree([n]; a)$. Each cell can be described as a union of images of cells of $\DMumford{n+1}(\R)$ under the reduction map 
    $$\rho: \DMumford{n+1}(\R)\onto \hassett{a}(\R).$$
    In general, unions and images of cells need not again be cells (in particular, they need not be contractible). Thus, rather than simply listing the cells to be merged (which we give below as Corollary ... ), we proceed by modifying the distance algorithm itself and using it to directly parametrize the dual cells. Our description yields explicit homeomorphisms from these cells to products of permutahedra.

    \subsubsection{Weighted distance algorithm}

We first describe the weighted variant of the distance algorithm. We let $\vec{d} \in \mathbb{R}^{n-1}_{\geq 0}$ and $\sigma \in S_n$ as in the unweighted case.

\begin{defn}[Distance algorithm, $a$-weighted version]\label{alg:a-weighted-diff-vec-to-rtree}
    Let $\vec{d}$ and $\sigma \in S_n$ be as above. We construct $\tau$ as follows. We begin with $n$ isolated vertices labeled by the singleton sets $\{\sigma(1)\}, \ldots, \{\sigma(n)\}$, from left to right, each considered as a rooted tree with one vertex. We join these trees together by repeating the following steps until $\vec{d}$ is empty.
    \begin{enumerate}
        \item[(1)] Let $d:= \min d_i$ be the minimum distance remaining in $\vec{d}$.
        \item[(2)] For each sequence of consecutive copies of $d$ in $\vec{d}$, say $d_i = d_{i+1} = \cdots  = d_j$, attach the roots of the $i$-th through $(j+1)$-st subtrees as ordered children of a new, unlabeled vertex $v$. Then delete $d_i, \ldots, d_j$ from $\vec{d}$.
        \item[(3)] (Compression step) Let $A_1, \ldots, A_k \subseteq [n]$ be the labels of the leaves of the newly created subtree $\tau_v$. If $\sum_{i=1}^k |A_i| \leq a$, replace $\tau_v$ by a single leaf labeled $\bigcup_{i=1}^k A_i$.
    \end{enumerate}
    We denote the resulting tree by $\tau^{\mathrm{dist}}_a(\sigma, \vec{d})$; it is an element of $\Rtree([n]; a)$ compatible with $\sigma$.
\end{defn}
The $a$-weighted distance algorithm is equivalent to applying the $a$-compression map $\varpi_a$ to the output of the unweighted algorithm (at least if all $d_i > 0$). We note that, inductively, the compression step can only occur if the subtrees being merged are, themselves, leaves.

        \subsubsection{Subsets of $\overline{M_{0, \mcA(a)}}(\R)$ indexed by $a$-stable trees} 
        
        Recall that for the unweighted moduli space $\overline{M_{0,n+1}}(\R)$, we have described both refined cubes, indexed by refined stable trees, and dual cubes, indexed by stable trees (on $[n]$). In the weighted case, for simplicity, we focus only on the dual cells and omit the refined cells. We define a map
        \[
        \bigcell_a : \DMumford{\mcA(a)}(\R) \to \StRtree([n]; a)
        \]
        that serves a similar purpose to $\bigcell$ defined in Section~\ref{subsec:dual-Mod-space-unweighted-strat}.

        If $(C; x_\bullet)\in \overline{M_{0,\mcA(a)}}(\R)$ is smooth, we choose any coordinates on $C \cong \R\Pj^1$ for which the marked point $x_{n+1} = \infty$. Since $x_{n+1}$ has weight $1$, all the other marked points then have finite coordinate values; they are therefore ordered as
        \[
        x_{\sigma(1)} \le \cdots \le x_{\sigma(n)} \text{ for some } \sigma \in S_n,
        \]
        where at least one of the inequalities is strict. In addition, if $x_{\sigma(i)}=x_{\sigma(i+1)}=\ldots = x_{\sigma(j)}$, then $j-i+1\le a$. We let $\vec{d}$ be the vector of successive differences
        \[
        \vec{d} = (x_{\sigma(2)} - x_{\sigma(1)}, \ldots, x_{\sigma(n)} - x_{\sigma(n-1)}).
        \]
        We then define $\bigcell_a(C;x_{\bullet})\in \StRtree([n]; a)$ to be the equivalence class of $\tau^{\mathrm{dist}}(\sigma, \vec{d})$ under marking all internal vertices as flippable. If $C$ is not smooth, then similarly to the unweighted case, we compute the tree for each component using rooted coordinates, then attach the trees according to how the components of $C$ are attached. It can be checked that $\bigcell_a = \varpi_a\circ \bigcell$ and that $\bigcell_a(C;x_{\bullet})$ is independent of the choice of $\sigma$.

        For an $a$-stable tree $\tau \in \StRtree([n]; a)$, we define the \textbf{dual cell}
        \[
        W_\tau := \{(C; x_\bullet) \in \DMumford{\mcA(a)}(\R) : \bigcell_a(C; x_\bullet) = \tau\}.
        \]

        To describe the cell closure, we describe a partial order $\succeq$ on $\Rtree([n]; a)$ and one on $\StRtree([n]; a)$ as follows. Let $\tau, \tau' \in \Rtree([n]; a)$. Then $\tau \succeq \tau'$ if $\tau'$ can be obtained from $\tau$ by any combination of contracting internal edges of $\tau$ and splitting leaves of $\tau$ into sequences of consecutive leaves (and accordingly decomposing the leaf label into disjoint subsets). For $a$-stable trees $\tau, \tau' \in \StRtree([n]; a)$, we say $\tau \succeq \tau'$ if there exist representatives in $\Rtree([n]; a)$ for which $\tau \succeq \tau'$.

        \begin{exmp}
        We have 
        $$\vcenter{
            \hbox{
                \begin{tikzpicture}[xscale=2]
                    \node [circle, draw, fill, inner sep =1pt, label = { 90: $\{1, 2, 3\}$ }] (node_0_0) at (0,0) {};
                    \node [circle, draw, fill, inner sep =1pt, label = { 90: $\{4, 5, 6, 7\}$ }] (node_1_0) at (1,0) {};
                    \node [circle, draw, fill, inner sep =1pt, label = { 90: $\{8, 9\}$ }] (node_2_0) at (2,0) {};
                    \node [circle, draw, fill, inner sep =1pt, ] (node_1'5_-1) at (1.5,-1) {};
                    \node [circle, draw, fill, inner sep =1pt, ] (node_0'75_-2) at (0.75,-2) {};
                    \draw[blue] (node_0'75_-2) -- (node_0_0);
                    \draw[] (node_0'75_-2) -- (node_1'5_-1);
                    \draw[green] (node_1'5_-1) -- (node_1_0);
                    \draw[red] (node_1'5_-1) -- (node_2_0);
                \end{tikzpicture}
            }
        } \succeq
        \vcenter{
            \hbox{
                \begin{tikzpicture}[xscale=1.5]
                    
                    \node [circle, draw, fill, inner sep =1pt, label = { 90: $\{1, 2, 3\}$ }] (node_0_0) at (0,0) {};
                    \node [circle, draw, fill, inner sep =1pt, label = { 90: $\{4\}$ }] (node_1_0) at (1,0) {};
                    \node [circle, draw, fill, inner sep =1pt, label = { 90: $\{5, 6\}$ }] (node_2_0) at (2,0) {};
                    \node [circle, draw, fill, inner sep =1pt, label = { 90: $\{7\}$ }] (node_3_0) at (3,0) {};
                    \node [circle, draw, fill, inner sep =1pt, label = { 90: $\{8\}$ }] (node_4_0) at (4,0) {};
                    \node [circle, draw, fill, inner sep =1pt, label = { 90: $\{9\}$ }] (node_5_0) at (5,0) {};
                    \node [circle, draw, fill, inner sep =1pt, ] (node_1'5_-2) at (2.5,-2) {};
                    \draw[blue] (node_1'5_-2) -- (node_0_0);
                    \draw[green] (node_1'5_-2) -- (node_1_0);
                    \draw[green] (node_1'5_-2) -- (node_2_0);
                    \draw[green] (node_1'5_-2) -- (node_3_0);
                    \draw[red] (node_1'5_-2) -- (node_4_0);
                    \draw[red] (node_1'5_-2) -- (node_5_0);
                    
                \end{tikzpicture}
            }
        },$$
        because the tree to the right is obtained by splitting certain leaf labels (illustrated by the edge colours) and contracting an internal edge.
        \end{exmp}

        \begin{rmk}\label{rmk:properties-of-weighted-algorithm}
            Let $\rho:\overline{M_{0,n+1}}(\R)\onto \overline{M_{0, \mcA(a)}}(\R)$ be the reduction map and let $\tau\in \StRtree([n]; a)$. The following observations are immediate from the definition of the $a$-weighted distance algorithm and the results of Section \ref{subsec:dual-Mod-space-unweighted-strat}.
            \begin{enumerate}
                \item $\overline{W_{\tau}}=\coprod_{\tau'\preceq \tau} W_{\tau'}$;
                \item ${W_{\tau}}= \bigcup_{\tau'\in \varpi_a^{-1}(\tau)} \rho(W_{\tau'})$; 
                \item $\overline{W_{\tau}}=\bigcup_{\tau'} \rho(R_{\tau'})$, where $\tau'$ ranges over the trees in $\RStRtree([n])$ such that, upon marking all vertices as flippable, we have $\varpi_a(\tau')\preceq \tau$;
                \item $W_{\tau}$ is locally closed in the Hassett space $\overline{M_{0,\mcA(a)}}(\R)$.
            \end{enumerate}
        \end{rmk}
        Thus, each dual cell in our Hassett space is the union of the images of several dual cubes in $\overline{M_{0, n+1}}(\R)$ via the reduction map $\overline{M_{0,n+1}}(\R)\onto \overline{M_{0, \mcA(a)}}(\R)$. We now analyze the structure of the cell.
        
        We first give a product decomposition of $W_{\tau}$, similar in spirit to Lemma \ref{lem:little-cube-incl-boundary-homeo-to-cube}.

        \begin{prop}\label{prop:a-proper-dual-cell-product-decmop}
            Let $\tau \in \StRtree([n]; a)$ and let $A_1, \ldots, A_r \subseteq [n]$ be its leaf labels listed in left to right order. 
            Let $\tau_0 \in \StRtree([r])$ be the (unweighted) stable tree obtained from $\tau$ by replacing the leaf label $A_i$ by the single label $i$, for all $i$. Let $W_{\tau_0} \subseteq \DMumford{n+1}(\R)$ denote the corresponding unweighted dual cube.

            For $1 \leq i \leq r$, let
            $\tau_i = \vcenter{\hbox{
            \begin{tikzpicture}[xscale=0.2, yscale=0.3]
                \trivialTree{$A_i$}{$\{\whitecircle\}$}
            \end{tikzpicture}
        }}$
            be the unique $|A_i|$-stable tree with exactly two leaves, one labeled $A_i$ and one formally labeled $\{\whitecircle\}$, both attached to the root. Let $W_{\tau_i} \subseteq \hassett{|A_i|}(\R)$ denote the corresponding $|A_i|$-stable dual cell in an $(|A_i|+2)$-marked moduli space.
            There are compatible homeomorphisms
            
            \begin{equation}\label{eqn:a-proper-cell-homeo-to-product}
            W_\tau \cong W_{\tau_0} \times \prod_{i=1}^r W_{\tau_i} \quad \text{and} \quad
            \overline{W_\tau} \cong \overline{W_{\tau_0}} \times \prod_{i=1}^r \overline{W_{\tau_i}}.
            \end{equation}
        \end{prop}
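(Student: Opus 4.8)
The plan is to build the homeomorphism \eqref{eqn:a-proper-cell-homeo-to-product} explicitly from the coordinates produced by the $a$-weighted distance algorithm (Definition~\ref{alg:a-weighted-diff-vec-to-rtree}), by separating, for a curve $(C;x_\bullet)\in W_\tau$, its \emph{inter-cluster} data — which will record a point of $W_{\tau_0}$ — from its \emph{intra-cluster} data for each leaf label $A_i$, which will record a point of $W_{\tau_i}$. As a first reduction I would assume $C$ is irreducible (a smooth point of the Hassett space): the general case follows exactly as in Lemma~\ref{lem:little-cube-incl-boundary-homeo-to-cube} and the discussion of Section~\ref{subsec:marked-curves-to-trees}, since $\bigcell_a$, like $\bigcell$, is computed component by component in rooted coordinates, so both sides of \eqref{eqn:a-proper-cell-homeo-to-product} decompose compatibly over the components of $C$.

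So suppose $C=\R\Pj^1$ with $x_{n+1}=\infty$ and $x_{\sigma(1)}\le\cdots\le x_{\sigma(n)}$, and let $\vec d$ be the vector of successive differences. Since $\bigcell_a=\varpi_a\circ\bigcell$, the composition $(A_1,\ldots,A_r)$ is compatible with $\sigma$, so each $A_i$ is a consecutive block $A_i=\{\sigma(k_{i-1}{+}1),\ldots,\sigma(k_i)\}$; write $\delta_i:=x_{\sigma(k_i+1)}-x_{\sigma(k_i)}$ for the $r{-}1$ \emph{inter-cluster gaps}. The structural fact underlying the whole argument is that for each $i$ every intra-cluster gap of $A_i$ is strictly smaller than both $\delta_{i-1}$ and $\delta_i$ (reading a nonexistent boundary gap as $+\infty$); this is precisely the condition — following from the $a$-stability of $\tau$ together with the compression step of the algorithm — that $A_i$ occupies a maximal subtree with $\le a$ leaves in the unweighted tree $\bigcell(C;x_\bullet)$. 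It lets one define two maps. First, let $\mathrm{outer}(C;x_\bullet)\in W_{\tau_0}$ be the $(r{+}1)$-pointed curve with $y_i=\delta_1+\cdots+\delta_{i-1}$ for $1\le i\le r$ and $y_{r+1}=\infty$: running the \emph{unweighted} distance algorithm on $(y_1,\ldots,y_r)$ returns $\tau$ with the labels $A_i$ renamed to $i$, namely $\tau_0$. Second, let $\mathrm{inner}_i(C;x_\bullet)\in W_{\tau_i}$ record $(x_j)_{j\in A_i}$ rescaled into a fixed window, together with a marked point $\{\whitecircle\}$ placed at the normalized position of the adjoining inter-cluster gap; the dominance of $\delta_{i-1},\delta_i$ over the gaps within $A_i$ is exactly the assertion that this $(|A_i|{+}2)$-pointed, $|A_i|$-stable curve has dual tree $\tau_i$. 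Both maps are continuous via the evident rational formulas, as in Lemmas~\ref{lem:little-cube-interior-only-homeo-to-cube} and \ref{lem:little-cube-incl-boundary-homeo-to-cube}.

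To see that $\Phi:=(\mathrm{outer},\mathrm{inner}_1,\ldots,\mathrm{inner}_r)$ is a bijection onto $W_{\tau_0}\times\prod_i W_{\tau_i}$, I would exhibit the inverse directly: normalize the outer curve so $y_{r+1}=\infty$, read off $\delta_j=y_{j+1}-y_j$, and insert into the $i$-th slot the normalized configuration of $A_i$ (forgetting its $\{\whitecircle\}$-point), scaled so that its extent is dominated by $\min(\delta_{i-1},\delta_i)$; the admissible scalings form an open interval, all of whose points give the same element of the Hassett space after contracting the $\mcA(a)$-unstable components, so any continuous choice of normalization works. Running the $a$-weighted distance algorithm on the resulting configuration returns $\tau$, since the intra-cluster gaps are processed and compressed first — reproducing the leaves $A_i$ with the data of the $\tau_i$ — and the inter-cluster gaps then reproduce $\tau_0$. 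Bijectivity is a direct check and continuity of $\Phi^{-1}$ is again visible from the formulas, which yields the first homeomorphism in \eqref{eqn:a-proper-cell-homeo-to-product}.

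Finally I would extend $\Phi$ to the closures. By Remark~\ref{rmk:properties-of-weighted-algorithm}(1) we have $\overline{W_\tau}=\coprod_{\tau'\preceq\tau}W_{\tau'}$, and similarly for $\tau_0$ and each $\tau_i$, so it suffices to show that $\tau'\mapsto(\tau'_{\mathrm{outer}},(\tau'_i)_i)$ defines a poset isomorphism from $\{\tau'\preceq\tau\}$ onto the product of the poset of degenerations of $\tau_0$ with the posets of degenerations of the $\tau_i$, compatibly with $\Phi$. Concretely, a degeneration $\tau'\preceq\tau$ — obtained by contracting internal edges of $\tau$ and splitting leaf labels — decomposes uniquely: splitting a label inside a single $A_i$ and contracting internal edges lying over $\tau_0$ affect only the corresponding factors, while the remaining degenerations — a cluster spreading out into a permutation configuration on a bubbled-off component, and two adjacent clusters colliding and merging — must be matched respectively with the boundary faces of the $\Pi_{|A_i|}$-factor and of the $[-1,1]^{|E_{\Int}(\tau)|}$-factor. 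Once this matching is verified, $\overline\Phi=\overline{\mathrm{outer}}\times\prod_i\overline{\mathrm{inner}_i}$ is a continuous bijection from the compact space $\overline{W_\tau}$ to the Hausdorff space $\overline{W_{\tau_0}}\times\prod_i\overline{W_{\tau_i}}$, hence a homeomorphism, and it restricts to $\Phi$ on the interiors. I expect this last step to be the main obstacle: one must check that the reduction map $\rho:\DMumford{n+1}(\R)\onto\hassett{a}(\R)$ introduces no identifications on $\overline{W_\tau}$ beyond those predicted by the product of the face posets — equivalently, that the bubbling and cluster-merge strata occur with exactly the expected multiplicities — and it is here that the recursive structure of the distance algorithm, together with the argument behind Theorem~\ref{thm:WLtau-extend-along-border}, does the real work.
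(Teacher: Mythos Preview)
Your overall architecture is the same as the paper's: split the distance data of $(C;x_\bullet)$ into inter-cluster gaps $\delta_1,\ldots,\delta_{r-1}$, which assemble into the outer curve in $W_{\tau_0}$, and intra-cluster gaps for each $A_i$, which together with a witness point $\{\whitecircle\}$ assemble into a curve in $W_{\tau_i}$. The paper carries this out directly on $\overline{W_\tau}$ (handling singular curves componentwise) and then invokes the compact--Hausdorff argument, rather than first building $\Phi$ on $W_\tau$ and then extending via a poset isomorphism; your closure discussion is more elaborate than necessary, but not wrong in spirit.

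There is, however, a genuine error in your inverse map. You write: ``insert into the $i$-th slot the normalized configuration of $A_i$ (forgetting its $\{\whitecircle\}$-point), scaled so that its extent is dominated by $\min(\delta_{i-1},\delta_i)$; the admissible scalings form an open interval, all of whose points give the same element of the Hassett space after contracting the $\mcA(a)$-unstable components.'' This is false. The smooth curves you are constructing are already $\mcA(a)$-stable, so nothing is contracted, and different scalings of the cluster $A_i$ relative to $\delta_{i-1},\delta_i$ give genuinely different points of $\hassett{a}(\R)$ (all lying in $W_\tau$). A dimension count confirms this: the outer curve carries $r-2$ moduli and each inner curve \emph{with} its $\{\whitecircle\}$-point carries $|A_i|-1$ moduli, for a total of $n-2=\dim W_\tau$; dropping the $\{\whitecircle\}$-points loses $r$ parameters and your inverse cannot be well-defined. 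The $\{\whitecircle\}$-point is precisely what records the scale of the cluster relative to the adjacent inter-cluster gap, and the paper's inverse uses it exactly so: one first reads off the $\delta_j$ from $C_0$, then normalizes each $C_i$ so that the distance from the extreme point of $A_i$ to $\{\whitecircle\}$ equals the appropriate $\delta_j$, and only then inserts. With that correction your argument goes through and matches the paper's.
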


        \begin{proof}

            We construct a map
            $$\psi:\overline{W_{\tau}}\longrightarrow \overline{W_{\tau_0}} \times \prod_{i=1}^r \overline{W_{\tau_i}}$$
            and give an explicit description only for smooth curves. We discuss singular curves below.
            
            We denote by * the index of the marked point fixed at $\infty$. For each $j\in [n-1]$ with $j=|A_1|+\ldots+|A_i|$ where $i\in [r-1]$, we let $v_j\in V_{\Int}(\tau)$ denote the nearest common ancestor of the leaves labelled by $A_i$ and $A_{i+1}$. Note that the $v_j$  are distinct if and only if $\tau$ is binary.

            Choose $\sigma$ compatible with $A_{\bullet}$ and coordinates $x_{\bullet}$ on $C$ with $x_{\sigma(1)}\le \ldots \le x_{\sigma(n)}$ and $x_{*}=\infty$. For each $j \in [n-1]$, let $d_j=x_{\sigma(j+1)}-x_{\sigma(j)}$.  Note that, by the definition of the distance algorithm, if $v_j = v_k$ then $d_j = d_k$.

            Let $C=(C;x_{\bullet})\in \overline{W_{\tau}}$ be an arbitrary smooth ($C=\R\Pj^1$) curve. We define
            $$\psi(C)=\bigg( C_i=(C_i, x^{(i)}_{\bullet})\bigg)_{i=0}^r$$
            as follows, with $C_i=\R\Pj^1$ for all $i$.  

            We first describe the marked points on $C_i$ for $1\le i\le r$. By changing representatives of $\tau$ if necessary, assume the leaf labeled $A_i$ has a sibling to its right. Write $A_i=\{\sigma(k), \sigma(k+1), \ldots, \sigma(\ell)\}$ and let $j = |A_1| + \cdots + |A_i| + |A_{i+1}|$, so $v_j$ is the parent of the leaf labeled $A_i$.
            Define 
            $x^{(i)}_{\bullet}:=(x^{(i)}_{\sigma(k)} = 0,
            \ldots, x^{(i)}_{\sigma(\ell)}, x^{(i)}_{\whitecircle}, x^{(i)}_{*}=\infty)$
            by setting
            \begin{align*}
            x^{(i)}_{\sigma(p+1)} &= x^{(i)}_{\sigma(p)} + d_p, \text{ for } p = k, \ldots, \ell-1, \\
            x^{(i)}_{\whitecircle} &= x^{(i)}_{\sigma(\ell)} + d_j.
            \end{align*}
            (If the leaf labeled $A_i$ is a rightmost child, equivalently we may take $j = |A_1| + \cdots + |A_{i-1}|$ and place $x^{(i)}_{\whitecircle}$ left of $x_{\sigma(k)}^{(i)}$ at $-d_j$; this corresponds to flipping $\tau_i$.)

            For the curve $C_0$, we define $x^{(0)}_{\bullet}=(x^{(0)}_1=0, \ldots, x^{(0)}_r, x^{(0)}_*=\infty)$ where $x^{(0)}_{i+1}-x^{(0)}_i=d_j$ with $j=|A_1|+\ldots+|A_i|$.

            For singular curves $C$, the approach is similar to that discussed in Section~\ref{subsec:dual-Mod-space-unweighted-strat}. For $C_0$, we recursively construct $\psi$ on each component $C' \subseteq C$, and attach the $C'_0$ according to how the components $C'$ are attached in $C$. In contrast, for each $i$, $C_i$ depends only on the component $C'$ containing the leaf labeled $|A_i|$.

            It is straightforward to verify that $\psi$ is well-defined and bijective. Given each smooth pointed curve $C_i$, we can reconstruct the smooth curve $C$ with the vector of consecutive distances $(d_1, \ldots, d_{n-1})$ as follows: for each index $j$ for which $v_j$ is an internal vertex, the value of $d_j$ is determined by examining $C_0$. An appropriate choice of $\sigma \in S_n$ is determined by the curves $C_1, \ldots, C_r$. For $1\le i\le r$, normalize  the marked points on $C_i$ to be consistent with the values of $d_j$ that are already computed from $C_0$. The values of $d_j$ for the remaining indices $j\in [n-1]$ are now determined by examining $C_1, \hdots, C_r$. Continuity of $\psi$ holds by a similar argument to that discussed in Section~\ref{subsec:dual-Mod-space-unweighted-strat}. Since $\psi$ is a continuous bijection between compact Hausdorff spaces, it is a homeomorphism. The restriction to the interior claim is easy to verify.
        \end{proof}

\begin{exmp}[$n = 7, a = 3$]
\label{ex:a-proper-dual-cell-product-decmop-n7a3}
We demonstrate the identification $\overline{W_\tau} \cong \overline{W_{\tau_0}} \times \prod_{i=1}^3 \overline{W_{\tau_i}}$, where
\[\tau=
\vcenter{
    \hbox{
        \begin{tikzpicture}[xscale=1.5,yscale=1]
            \node [circle, draw, fill, inner sep =1pt, label = { 90: $\{1, 2, 3\}$ }] (node_0_0) at (0,0) {};
            \node [circle, draw, fill, inner sep =1pt, label = { 90: $\{4, 5\}$ }] (node_1_0) at (1,0) {};
            \node [circle, draw, fill, inner sep =1pt, label = { 90: $\{6, 7\}$ }] (node_2_0) at (2,0) {};
            \node [circle, draw, fill, inner sep =1pt, label = { 180: $v_5$ }] (node_1'5_-1) at (1.5,-1) {};
            \node [circle, draw, fill, inner sep =1pt, label = { 180: $v_3$ }] (node_1_-2) at (1,-2) {};
            \draw[] (node_1_-2) -- (node_0_0);
            \draw[] (node_1_-2) -- (node_1'5_-1);
            \draw[] (node_1'5_-1) -- (node_1_0);
            \draw[] (node_1'5_-1) -- (node_2_0);
        \end{tikzpicture}
    }
}\in \Rtree([7]; 3)\]

and $\tau_0, \tau_1, \tau_2, \tau_3$ are the trees
$$\tau_0=
\vcenter{
    \hbox{
        \begin{tikzpicture}[yscale=0.8]
            \node [circle, draw, fill, inner sep =1pt, label = { 90: $1$ }] (node_0_0) at (0,0) {};
            \node [circle, draw, fill, inner sep =1pt, label = { 90: $2$ }] (node_1_0) at (1,0) {};
            \node [circle, draw, fill, inner sep =1pt, label = { 90: $3$ }] (node_2_0) at (2,0) {};
            \node [circle, draw, fill, inner sep =1pt, ] (node_1'5_-1) at (1.5,-1) {};
            \node [circle, draw, fill, inner sep =1pt, ] (node_1_-2) at (1,-2) {};
            \draw[] (node_1_-2) -- (node_0_0);
            \draw[] (node_1_-2) -- (node_1'5_-1);
            \draw[] (node_1'5_-1) -- (node_1_0);
            \draw[] (node_1'5_-1) -- (node_2_0);
        \end{tikzpicture}
    }
},\ 
\tau_1=\vcenter{
    \hbox{
        \begin{tikzpicture}[xscale=0.4, yscale=0.8]

            \trivialTree{$\{1,2,3\}$}{$\{\whitecircle\}$}
            
        \end{tikzpicture}
    }
},\ 
\tau_2=\vcenter{
    \hbox{
        \begin{tikzpicture}[xscale=0.4, yscale=0.8]

            \trivialTree{$\{4,5\}$}{$\{\whitecircle\}$}
            
        \end{tikzpicture}
    }
},\ 
\tau_3=\vcenter{
    \hbox{
        \begin{tikzpicture}[xscale=0.4, yscale=0.8]

            \trivialTree{$\{\whitecircle\}$}{$\{6,7\}$}
            
        \end{tikzpicture}
    }
}.
$$
In what follows, we denote by * the index of the marked point fixed at $\infty$.
Consider an arbitrary stable curve $(C;x_{\bullet})\in \overline{W_{\tau}}$. For $i=0,1,2,3$, let 
$$(C_i; x^{(i)}_{\bullet})$$ 
denote its projection onto $\overline{W_{\tau_i}}$ under the homeomorphism given in Proposition~\ref{prop:a-proper-dual-cell-product-decmop}.
The curve $(C;x_{\bullet})$ (with $x_*=\infty$ on the component containing it) admits one of two possible tree structures:

\begin{enumerate}
    \item $C=\R\Pj^1$ is smooth. Then for some $\sigma\in S_7$ fixing the sets $\{1,2,3\},\{4,5\},\{6,7\}$, we have $x_{\sigma(1)}\le \ldots \le x_{\sigma(7)}$. Setting $d_i=x_{\sigma(i+1)}-x_{\sigma(i)}$, we have $\max\{d_1, d_2\}\le d_3$, and $\max\{d_4,d_6\}\le d_5\le d_3$. (See Figure~\ref{fig:curve-drawing-a-proper-dual-cell-product-decmop-n7a3} left.) Note that $d_3>0$ due to stability and that $d_5>0$ since the the marked points $x_4,x_5,x_6,x_7$ cannot collide without bubbling off. For $i=0,1,2,3$, we have $C_i=\R\Pj^1$ with 
    
    \begin{alignat*}{2}
    x^{(0)}_{\bullet} & = (x^{(0)}_1, x^{(0)}_2, x^{(0)}_3,  x^{(0)}_{*}) && = (0, d_3, d_3+d_5, \infty)\\
    x^{(1)}_{\bullet} & = (x^{(1)}_{\sigma(1)}, x^{(1)}_{\sigma(2)}, x^{(1)}_{\sigma(3)}, x^{(1)}_{\whitecircle}, x^{(1)}_{*})\ && = (0, d_1, d_1+d_2, d_1+d_2+d_3, \infty)\\
    x^{(2)}_{\bullet} & = (x^{(2)}_{\sigma(4)}, x^{(2)}_{\sigma(5)}, x^{(2)}_{\whitecircle}, x^{(2)}_{*}) && = (0, d_4, d_4+d_5, \infty)\\
    x^{(3)}_{\bullet} & = (x^{(3)}_{\whitecircle}, x^{(3)}_{\sigma(6)}, x^{(3)}_{\sigma(7)}, x^{(3)}_{*}) && = (-d_5, 0, d_6, \infty).
    \end{alignat*}
   
    \item $C$ consists of two components,  $C^*\cong\R\Pj^1$ containing $x_1,x_2,x_3,x_*$; and $C'\cong\R\Pj^1$ containing $x_4,x_5,x_6,x_7$. Let $y= C^*\cap C'$ be the node of $C$, and set $y=\infty$ on $C'$. For some $\sigma\in S_7$ fixing the sets $\{1,2,3\},\{4,5\},\{6,7\}$, we have (see Figure \ref{fig:curve-drawing-a-proper-dual-cell-product-decmop-n7a3} right) 
    \begin{itemize}
        \item on $C^*$: $d_1=x_{\sigma(2)}-x_{\sigma(1)}\ge 0$, $d_2=x_{\sigma(3)}-x_{\sigma(2)}\ge 0$, $d_3=y-x_{\sigma(3)}>0$ with $\max\{d_1,d_2\}\le d_3$;
        \item on $C'$: $d_i=x_{\sigma(i+1)}-x_{\sigma(i)}\ge 0$ for $i=4,5,6$ with $\max\{d_4, d_6\}\le d_5$ and $d_5>0$.
    \end{itemize}
    
    As in the smooth case, it follows that for $i=1,2,3$, $C_i=\R\Pj^1$ with 
    
    \begin{alignat*}{2}
        x^{(1)}_{\bullet} & = (x^{(1)}_{\sigma(1)}, x^{(1)}_{\sigma(2)}, x^{(1)}_{\sigma(3)}, x^{(1)}_{\whitecircle}, x^{(1)}_{*}) && = (0, d_1, d_1+d_2, d_1+d_2+d_3, \infty)\\
        x^{(2)}_{\bullet} & = (x^{(2)}_{\sigma(4)}, x^{(2)}_{\sigma(5)}, x^{(2)}_{\whitecircle}, x^{(2)}_{*}) && = (0, d_4, d_4+d_5, \infty)\\
        x^{(3)}_{\bullet} & = (x^{(3)}_{\whitecircle}, x^{(3)}_{\sigma(6)}, x^{(3)}_{\sigma(7)}, x^{(3)}_{*}) && = (-d_5, 0, d_6, \infty).
    \end{alignat*}
    However, $C_0$ consists of two components, $C_0^*$ containing $x^{(0)}_1,x^{(0)}_*$; and $C_0'$ containing $x^{(0)}_2, x^{(0)}_3$. Let $y^{(0)}$ denote the node of $C_0$, which is set to $\infty$ on $C_0'$. Then up to coordinate change, we have
    \begin{alignat*}{2}
        \text{ on } C_0^* :\ && (x^{(0)}_1,\ y^{(0)},\ x^{(0)}_*) & = (0, d_3, \infty); \\
        \text{ on } C_0' :\ && (x^{(0)}_2, x^{(0)}_3, y^{(0)}) & = (0, d_5, \infty).
    \end{alignat*}
    Note that $(C^{(0)}, x^{(0)}_{\bullet})$ is the unique element in $\overline{W_{\tau_0}}$ that corresponds to a singular curve.  
\end{enumerate}
\end{exmp}

        \begin{figure}[h]
            \centering

\begin{tikzpicture}

    \coordinate (p) at (0, 0);
    \def \arcradius {1.75};
    \def \sep {1.5};
    \def \textradius{1.4};

    \begin{scope}[shift={(p)}]
        \def \radius {2}
        \draw (0, 0) circle[radius=\radius];
        \foreach \a in {270, 170, 150, 130, 75, 60, 30, 15}
            	\path (0, 0) ++(\a:\radius) node {$\bullet$};
        \path (0, 0) ++ (270:\radius-0.1) node[above]	{$\infty$}  +(0, -0.2) node[below] {$*$};
        \path (0,0) ++  
        (130:{\radius*1.1}) 
         node[above left, rotate=60]	{$\{1, 2, 3\}$};
        \path (0, 0) ++ (80:\radius) node[above right]	{$\{4, 5\}$};
        \path (0, 0) ++ (20:\radius) node[above right]	{$\{6, 7\}$};
    
        \foreach \a/\s [remember=\a as \lasta (initially 170)]
        		in {150/$d_1$, 130/$d_2$, 75/$d_3$, 60/$d_4$, 30/$d_5$, 15/$d_6$} {
            \draw[<->] (0, 0) ++ (\lasta-\sep:\arcradius) arc (\lasta:\a+\sep:\arcradius);
            \path (0, 0) ++ (\lasta:\textradius) arc (\lasta: 0.5*(\lasta+\a) : \textradius) node {\s};
        };
    \end{scope}

    \coordinate (s) at (7, -1);
    \def \smallradius {1.7};
    \def \arcradius {1.5};
    \def \sep {1.5};
    \def \textradius{1.2};
        
    \begin{scope}[shift={(s)}]
        \def \radius {\smallradius};
        \draw (0, 0) circle[radius=\radius];
        \foreach \a in {270, 170, 150, 130, 45}
            	\path (0, 0) ++(\a:\radius) node {$\bullet$};
        \path (0, 0) ++ (270:\radius-0.1) node[above]	{$\infty$} +(0, -0.15) node[below] {$*$};
        \path (0,0) ++  
        (130:{\radius*1.1}) 
         node[above left, rotate=60]	{$\{1, 2, 3\}$};
        \path (0, 0) ++ (45:\radius) node[above right]	{$\infty$};
    
        \foreach \a/\s [remember=\a as \lasta (initially 170)]
        		in {150/$d_1$, 130/$d_2$, 45/$d_3$} {
            \draw[<->] (0, 0) ++ (\lasta-\sep:\arcradius) arc (\lasta:\a+\sep:\arcradius);
            \path (0, 0) ++ (\lasta:\textradius) arc (\lasta: 0.5*(\lasta+\a) : \textradius) node {\s};
        };

    \end{scope}
    
    \coordinate (r) at ($(s) + \smallradius*sqrt(2)*(1, 1)$);
    
    \begin{scope}[shift={(r)}]
        \def \radius {\smallradius};
        \draw (0, 0) circle[radius=\radius];
        \foreach \a in {225, 120, 90, 30, 0}
            	\path (0, 0) ++(\a:\radius) node {$\bullet$};
        \path (0, 0) ++ (110:\radius+0.1) node[above]	{$\{4, 5\}$};
        \path (0, 0) ++ (15:\radius) node[right]	{$\{6, 7\}$};
    
        \foreach \a/\s [remember=\a as \lasta (initially 120)]
        		in {90/$d_4$, 30/$d_5$, 0/$d_6$} {
            \draw[<->] (0, 0) ++ (\lasta-\sep:\arcradius) arc (\lasta:\a+\sep:\arcradius);
            \path (0, 0) ++ (\lasta:\textradius) arc (\lasta: 0.5*(\lasta+\a) : \textradius) node {\s};
        };
    \end{scope}
\end{tikzpicture}
\caption{Illustrations of the stable curves described by Example~\ref{ex:a-proper-dual-cell-product-decmop-n7a3}. Left: Smooth case. Right: Singular case with two components.}
\label{fig:curve-drawing-a-proper-dual-cell-product-decmop-n7a3}
\end{figure}
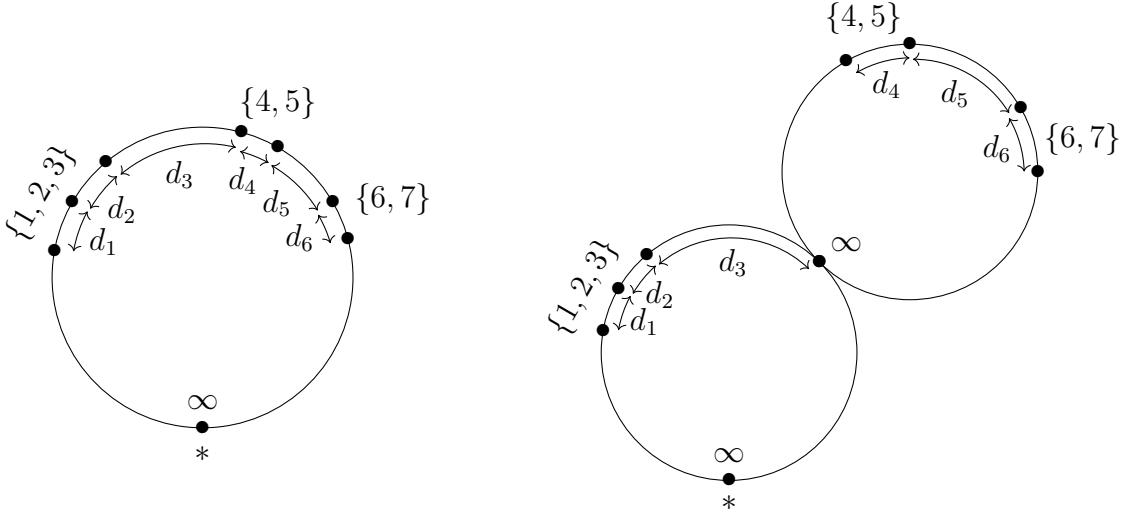

\begin{rmk}
    With notation as in Proposition \ref{prop:a-proper-dual-cell-product-decmop}, we have $\overline{W_{\tau_0}} \cong [-1, 1]^{E_{\Int}(\tau_0)}$ by Theorem \ref{thm:cl-dual-cell-homeo-to-pm1-to-E} from the unweighted case. Thus, in order to show that our weighted dual cells $\overline{W_{\tau}}$ are indeed cells, it remains and is sufficient to consider the case $a = n-1$, with $\tau$ the unique $(n-1)$-stable tree on $[n]$ with one leaf labeled $[n-1]$ and the other labeled $\{n\}$, both attached to the root. We will see that in this case, $\overline{W_{\tau}}$ is naturally identified with the permutahedron.
\end{rmk}

    \subsubsection{Maps between products of permutahedra and Hassett spaces}
    
        The $n$-th order \emph{permutahedron} $\Pi_n\subseteq \R^n$ is the convex hull
        \begin{equation}\label{eqn:def-permutahedron}
        \Pi_n := \mathrm{conv} \{\vec{\sigma}:=(\sigma(1), \sigma(2), \ldots, \sigma(n))\in \R^n: \sigma\in S_n\}.
        \end{equation}
        Note that $\Pi_n$ is an $(n-1)$-dimensional polytope contained in the hyperplane 
        $$\left\{\sum x_i = \frac{n(n+1)}{2}\right\}\subseteq \R^n.$$
        Faces of $\Pi_n$ correspond to compositions of $[n]$. Let $A_{\bullet}=(A_1, \ldots, A_r)$ be a composition of $[n]$, let $\Pi_{A_{\bullet}}$ be the corresponding face and $\vec{o}_{A_{\bullet}}$ its centroid. Then $\Pi_{A_{\bullet}}$ has dimension $n-r$ and is the convex hull of the points $\vec{x}=(x_1, x_2, \ldots, x_n)$ satisfying
        \begin{align*}
            \{x_i:i\in A_1\}&=\{1,2,\ldots, |A_1|\};\\
            \{x_i:i\in A_1\cup A_2\}&=\{1,2,\ldots, |A_1|+|A_2|\};\\
            &\vdots \\
            \{x_i:i\in A_1\cup \ldots\cup  A_{r-1}\}&=\{1,2,\ldots, |A_1|+\ldots+|A_{r-1}|\};\\
            \{x_i:i\in [n]\}&=\{1,2,\ldots, n\}.
        \end{align*}
        That is, $\vec{x}=\vec{\sigma}$ for some $\sigma\in S_n$ where $\inv{\sigma}$ is compatible with $A_{\bullet}$. It is straightforward to see that $\Pi_{A_{\bullet}}$ is combinatorially equivalent to the product of permutahedra $\Pi_{|A_1|}\times \Pi_{|A_2|}\times \ldots \times \Pi_{|A_r|}$. We have $\Pi_{B_{\bullet}}\subseteq \Pi_{A_{\bullet}}$ if and only if $B_{\bullet}$ is a refinement of $A_{\bullet}$; in particular
        $$\partial \Pi_{A_{\bullet}}= \bigcup_{B_{\bullet}} \Pi_{B_{\bullet}}=\coprod_{B_{\bullet}}\Int(\Pi_{B_{\bullet}}),$$
        where $B_{\bullet}$ ranges over all proper refinements of $A_{\bullet}$.

            Let $a=n-1$, and fix $\tau$ to be the tree
            $$\tau =
            \vcenter{
                \hbox{
                    \begin{tikzpicture}[xscale=0.5, yscale=0.8]

                        \trivialTree{$\{1,2,\ldots, n-1\}$}{$\{n\}$}
                        
                    \end{tikzpicture}
                }
            }.
            $$
            Note that, by the $(n-1)$-stability condition, every $(C; x_\bullet) \in \hassett{n-1}$ is a smooth curve, in particular irreducible. For the remainder of this section, we assume that the $(n+1)$-st marked point is fixed at $\infty$ and not explicitly mention this fact in our discussion. The closed cell $\overline{W_{\tau}} \subseteq \hassett{n-1}(\R)$ consists of the all the curves of the form $(\R\Pj^1; x_{\bullet})$ such that, for some $\sigma\in S_n$ with $\sigma(n) = n$,
            $$x_{\sigma(1)}\le x_{\sigma(2)}\le \ldots \le x_{\sigma(n)}=x_n$$
            with 
            $$\max_{i\in [n-2]} (x_{\sigma(i+1)}-x_{\sigma(i)}) \le x_{\sigma(n)}-x_{\sigma(n-1)}=1.$$
            Intuitively, the marked points indexed at $1,\ldots, n-1$ are close together in the sense that they have ``small consecutive gaps"; whereas the $n$-th marked point is ``far away" from the first $n-1$ marked points.

            We describe a particular map $\varphi: \Pi_{n-1}\to \overline{W_{\tau}}$ and show that it is a homeomorphism. We identify $S_{n-1} \subseteq S_n$ as the permutations for which $\sigma(n) = n$,
            
            and we define $\varphi$ inductively by dimension on the faces of $\Pi_{n-1}$, such that the face $\Pi_{A_\bullet}$ corresponds to the curves for which the following holds: rescaling so that $d_{n-1} = 1$,
            \begin{align}\label{eqn:permutahedron-face-condition-for-distances}
            d_i = 1 \text{ if and only if } i = n-1 \text{ or } \sigma(i), \sigma(i+1) \text{ are in different parts of } A_\bullet.
            \end{align}
            For $\sigma\in S_{n-1} \subseteq S_n$, define 
            $$\varphi(\vec{\sigma}):=(\R\Pj^1; \sigma(1), \sigma(2), \ldots, \sigma(n-1), \sigma(n) = n, \infty)$$
            to be the permutation point corresponding to $\inv{\sigma}$. This defines $\varphi$ on the vertices (0-dimensional faces) of $\Pi_{n-1}$ and satisfies \eqref{eqn:permutahedron-face-condition-for-distances}. Next, fix a composition $A_{\bullet}=(A_1, \ldots, A_r)$ of $[n-1]$ where $r<n$. Assume $\varphi$ is defined compatibly with \eqref{eqn:permutahedron-face-condition-for-distances} and continuously on the union of the faces of $\Pi_{n-1}$ of dimension less than $\dim \Pi_{A_{\bullet}}=n-r$. In particular, $\varphi|_{\partial \Pi_{A_{\bullet}}}$ is completely determined. Every $\vec{z} \in \Pi_{A_\bullet}$ is then uniquely expressible as 
            $$\vec{z}=t\vec{y}+(1-t)\vec{o}_{A_{\bullet}}$$
            for some $\vec{y}\in \partial \Pi_{A_{\bullet}}$ and some $t$, and where where $\vec{o}_{A_\bullet}$ denotes the centroid of the face. We have $\varphi(\vec{y})=(\R\Pj^1; x_{\bullet})$, where for some $\sigma\in S_{n-1} \subseteq S_n$ consistent with $A_{\bullet}$, we have
            \begin{equation}\label{eqn:sigma-marked-point-concec-ordering}
                x_{\sigma(1)}\le x_{\sigma(2)}\le \ldots \le x_{\sigma(n-1)}< x_{\sigma(n)} = x_n.
            \end{equation}
            We set $d_i:= x_{\sigma(i+1)} - x_{\sigma(i)}$ for all $i$. Up to translation and scaling, we may assume $x_{\sigma(1)} = 0$ and that $\max_i(d_i) = 1$. In particular, $d_i \leq 1$ for all $i$, and by \eqref{eqn:permutahedron-face-condition-for-distances}, $d_i = 1$ if $i = n-1$ or $\sigma(i), \sigma(i+1)$ lie in different parts of $A_\bullet$.
            
            We define $\varphi(\vec{z})$ by scaling by $t$ the successive distances internal to each part of $A_\bullet$, putting
            \begin{equation}\label{eqn:scaling-the-di-by-param-t}
                d'_i:=\begin{cases}
                    td_i, & \sigma(i), \sigma(i+1)\text{ lie in the same part of } A_{\bullet}\\
                    d_i=1, & \text{otherwise}
                \end{cases}.
            \end{equation}
            We define $\varphi(\vec{z})$ to be the stable curve $(\R\Pj^1; x_{\bullet})$ where
            $$x_{\sigma(i)} = \sum_{j=1}^{i-1} d'_j \text{ for } 1\le i\le n,$$
            considering the empty sum to be $0$.

            \begin{rmk} The map $\varphi$ is well defined.
                \begin{enumerate}
                    \item Our construction of $\varphi(\vec{z})$ from $\varphi(\vec{y})$ is independent of the choice of $\sigma$.
                    \item\label{enumitem:perm-map-to-cell-t=0case} ($t=0$ case) If $A_{\bullet}=(A_1\ldots, A_r)$ is a composition of $[n-1]$, then $\varphi(\vec{o}_{A_{\bullet}})$ is the stable curve that sets the $i$-th marked point at $j-1$ if $i\in A_j$ and the $n$-th marked point at $r$. In this case each $d'_i\in \{0,1\}$. For example, if $A_{\bullet}=(\{1,3\}, \{2,5,6\}, \{4\}, \{7,8\})$, then 
                $$\varphi(\vec{o}_{A_{\bullet}})=
                \vcenter{
                    \hbox{
                        \begin{tikzpicture}
                            \node [circle, draw, minimum size=3cm] (c1) at (0,0) {};
                            \def\labelpos{180};
                            \markedpoint{c1}{\labelpos}{0}{1,3};
                            
                            \def\labelpos{130};
                            \markedpoint{c1}{\labelpos}{1}{2,5,6};

                            \def\labelpos{70};
                            \markedpoint{c1}{\labelpos}{2}{4};

                            \def\labelpos{30};
                            \markedpoint{c1}{\labelpos}{3}{7,8};

                            \def\labelpos{-20};
                            \markedpoint{c1}{\labelpos}{4}{9};
                            \def\labelpos{270};
                            \markedpoint{c1}{\labelpos}{$\infty$}{10};
                        \end{tikzpicture}
                    }
                }
                \in \hassett{n-1}(\R)=\R\Pj^7.$$
                    \item ($t=1$ case) If $\vec{z}=\vec{y}$, then indeed, our definitions of $\varphi(\vec{z})$ and $\varphi(\vec{y})$ coincide since $d'_i=d_i$.
                \end{enumerate}
            \end{rmk}
            By \eqref{enumitem:perm-map-to-cell-t=0case}, $\varphi$ is continuous at $\vec{o}_{A_\bullet}$ and hence on all of $\Pi_{A_\bullet}$, and compatible with \eqref{eqn:permutahedron-face-condition-for-distances}. Taking $A_{\bullet}=([n-1])$ to be the trivial composition implies that $\varphi$ is continuous.

            We briefly describe the inverse map $\theta : \overline{W_\tau} \to \Pi_{n-1}$. Given a stable curve $C=(\R\Pj^1;x_{\bullet})$ from $\overline{W_\tau}$, choose $\sigma$ to satisfy \eqref{eqn:sigma-marked-point-concec-ordering}, define $d_i := x_{\sigma(i+1)} - x_{\sigma(i)}$ for $1 \leq i \leq n-1$, and normalize so that $\max d_i=1$. The vector $\vec{d}=(d_i)$ of distances of consecutively placed marked points induces a composition $A_{\bullet}$ of $[n-1]$ compatible with $\sigma$, satisfying \eqref{eqn:permutahedron-face-condition-for-distances}.

            As above, we define $\theta$ inductively according to $\dim \Pi_{A_{\bullet}}$. If every $d_i=1$, then $C=\varphi(\inv{\vec{\sigma}})$. Suppose at least one of the $d_i$ is strictly less than $1$ and define 
            $$t:=\max_{d_i<1} d_i.$$
            If $t=0$, then $C=\varphi(\vec{o}_{A_{\bullet}})$. Assume $t\in (0,1)$. Consider the stable curve $C'=(\R\Pj^1; x'_{\bullet})$, where $x'_1=0$, $x'_{n}-x'_{\sigma(n-1)}=1$, and for $2\le i\le n-1$,
            $$x'_{\sigma(i)}-x'_{\sigma(i-1)}=\min\left\{1, \frac{d_{i-1}}{t}\right\}.$$

            By the inductive hypothesis, $C'=\varphi(\vec{y})$ for some $\vec{y}\in \Pi_{n-1}$ contained in a face of dimension smaller than $\dim\Pi_{A_{\bullet}}$. By construction, $\vec{y}\in \partial \Pi_{A_{\bullet}}$. It readily follows that $C=\varphi(t\vec{o}_{A_{\bullet}}+(1-t)\vec{y})$, showing that $\varphi$ is a bijection. By compactness of the Hausdorff spaces $\Pi_{n-1}$ and $\overline{W_{\tau}}$, $\varphi$ is a homeomorphism. Our discussion proves the following result. 

            \begin{thm}\label{thm:homeo-permu-dual-cell}
                Let $a = n-1$ and let $$\tau =
            \vcenter{
                \hbox{
                    \begin{tikzpicture}[xscale=0.5, yscale=0.8]

                        \trivialTree{$\{1,2,\ldots, n-1\}$}{$\{n\}$}
                        
                    \end{tikzpicture}
                }
            }.$$
            There is a homeomorphism $\Pi_{n-1}\cong \overline{W_{\tau}}$, taking the face $\Pi_{A_\bullet}$, for each composition $A_\bullet$ of $[n]$, to the set of curves satisfying \eqref{eqn:permutahedron-face-condition-for-distances}.
            In particular, $W_{\tau}=\Int(\overline{W_{\tau}})$ is homeomorphic to the ball of dimension $n-2$.
            \end{thm}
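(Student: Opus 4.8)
The plan is to assemble the explicit maps $\varphi\colon \Pi_{n-1}\to \overline{W_\tau}$ and $\theta\colon \overline{W_\tau}\to \Pi_{n-1}$ constructed above into a proof: verify that they are mutually inverse and continuous, invoke compactness, and then read off the statement about the open cell $W_\tau$. The first task is to confirm that $\varphi$ is well-defined and continuous. For $\vec z=t\vec y+(1-t)\vec{o}_{A_\bullet}$ in the relative interior of a face $\Pi_{A_\bullet}$, the curve $\varphi(\vec z)$ obtained by scaling the intra-block gaps of $\varphi(\vec y)$ by $t$ depends only on $\varphi(\vec y)$ and not on the chosen representative $\sigma$, so well-definedness holds; the case $t=1$ reproduces $\varphi|_{\partial\Pi_{A_\bullet}}$, so continuity follows by induction on $\dim\Pi_{A_\bullet}$, the only non-formal point being continuity at the centroid $\vec{o}_{A_\bullet}$ — i.e. the assertion that shrinking the intra-block gaps by $t\to 0^+$ collides precisely the marked points lying in a common block of $A_\bullet$ and converges to $\varphi(\vec{o}_{A_\bullet})$ in $\hassett{n-1}(\R)$.

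Next I would check that $\theta$ inverts $\varphi$. Both are defined by the same recursion on face dimension: given $C=(\R\Pj^1;x_\bullet)\in\overline{W_\tau}$, normalizing $\max_i d_i=1$ picks out a composition $A_\bullet$ of $[n-1]$ compatible with $\sigma$, and $t:=\max_{d_i<1}d_i$ lies in $[0,1]$; the extreme cases $t=1$ (a permutation point) and $t=0$ (a centroid $\vec{o}_{A_\bullet}$) are handled directly, and for $t\in(0,1)$ dividing the intra-block gaps by $t$ produces a curve $C'$ in a strictly lower-dimensional face, with $\theta(C)$ expressed through $\theta(C')$ in a way that undoes the scaling \eqref{eqn:scaling-the-di-by-param-t} step by step. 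A routine induction on $\dim\Pi_{A_\bullet}$ then gives $\theta\circ\varphi=\mathrm{id}$ and $\varphi\circ\theta=\mathrm{id}$. Hence $\varphi$ is a continuous bijection from the compact polytope $\Pi_{n-1}$ to $\overline{W_\tau}$, which is Hausdorff as a subspace of the manifold $\hassett{n-1}(\R)=\R\Pj^{n-2}$; therefore $\varphi$ is a homeomorphism, and by construction it carries each face $\Pi_{A_\bullet}$ onto the locus of curves satisfying \eqref{eqn:permutahedron-face-condition-for-distances}.

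For the last assertion I would identify $W_\tau$ with $\Int(\overline{W_\tau})$. The relative interior of $\Pi_{n-1}$ — the open face attached to the trivial composition $([n-1])$ — maps under $\varphi$, via \eqref{eqn:permutahedron-face-condition-for-distances}, to the curves with $d_i<1=d_{n-1}$ for every $i<n-1$; running Definition~\ref{alg:a-weighted-diff-vec-to-rtree} shows this is exactly the condition $\bigcell_a(C;x_\bullet)=\tau$, because the $(n-1)$-weighted distance algorithm returns the two-leaf tree $\tau$ precisely when the point labeled $n$ is at an extreme and the gap to it is the unique strict maximum among the consecutive gaps. Thus $W_\tau=\varphi(\Int\Pi_{n-1})$. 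Since $\overline{W_\tau}$ and $\hassett{n-1}(\R)=\R\Pj^{n-2}$ have the same dimension $n-2$, invariance of domain identifies the topological interior $\Int(\overline{W_\tau})$ with $\varphi(\Int\Pi_{n-1})$; and the interior of an $(n-2)$-dimensional convex polytope is an open $(n-2)$-ball, so $W_\tau$ is an open ball of dimension $n-2$ and $\overline{W_\tau}\cong\Pi_{n-1}$ is a closed $(n-2)$-ball.

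The step I expect to be the main obstacle is the continuity of $\varphi$ at the centroids $\vec{o}_{A_\bullet}$, equivalently the continuity of the collapsing process as $t\to 0^+$ in \eqref{eqn:scaling-the-di-by-param-t}: one must show that piling up marked points according to the blocks of $A_\bullet$ is a continuous operation in $\hassett{n-1}(\R)$, which uses the structure of the Hassett space (in particular that its charts accommodate blocks of up to $n-1$ coinciding marked points) and not merely combinatorial bookkeeping. Once this is granted, everything else is a formal induction together with the principle that a continuous bijection from a compact space to a Hausdorff space is a homeomorphism.
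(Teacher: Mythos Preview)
Your proposal is correct and follows essentially the same route as the paper: construct $\varphi$ and $\theta$ inductively over faces, check they are mutual inverses, and conclude via the compact--Hausdorff bijection argument, with continuity at the centroids $\vec{o}_{A_\bullet}$ singled out as the only substantive step. Your additional paragraph justifying $W_\tau=\Int(\overline{W_\tau})$ via invariance of domain is a helpful elaboration that the paper leaves implicit.
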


\begin{rmk}[Star-shaped polytopes]
    With $n=4$, the cell $W_\tau$ of Theorem \ref{thm:homeo-permu-dual-cell} uses the conditions 
    \[
    \max_{i = 1, 2}(x_{\sigma(i+1)} - x_{\sigma(i)}) \leq x_{\sigma(4)} - x_{\sigma(3)},
    \]
    where $\sigma \in S_4$ is the permutation such that
    \[x_{\sigma(1)} \leq x_{\sigma(2)} \leq x_{\sigma(3)} \leq x_{\sigma(4)},\]
    and it is assumed that $\sigma(4) = 4$. This condition is in fact only star-convex around the origin, not convex. If we rescale so that $x_4 - x_{\sigma(3)} = 1$ and translate so that $x_{\sigma(1)} = 0$, then $(x_1, x_2, x_3)$ is a permutation of $(0, d_1, d_1+d_2)$ where $\max(d_1, d_2) \leq 1$. The six resulting regions form a star-shaped two-dimensional subset of the coordinate planes of the first octant of $\R^3$:
    \begin{center}
        \includegraphics[height=5cm]{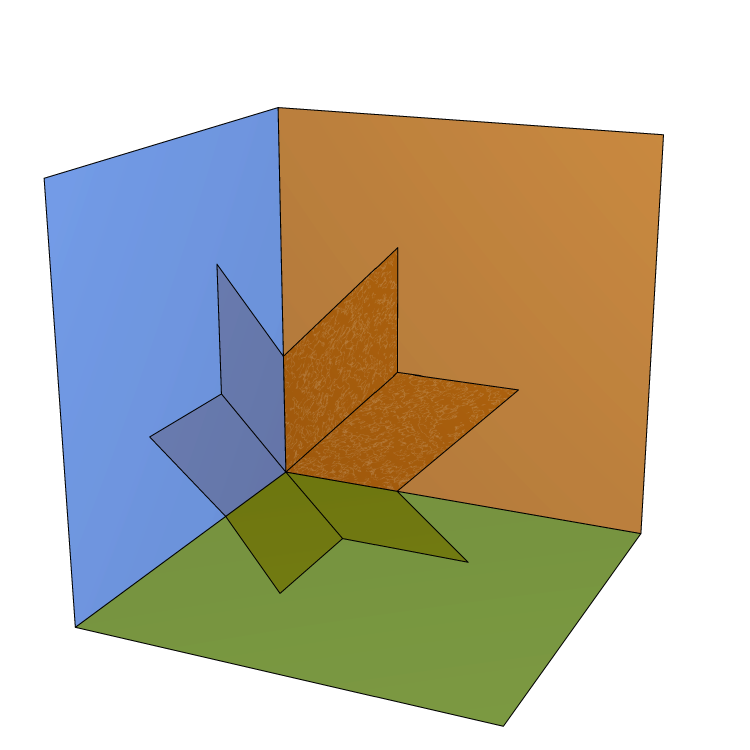}
    \end{center}
    This is why the homeomorphism $\varphi$ to the permutahedron is likewise defined in a star-shaped manner. The star polytope, and certain quotients of it, also arise in \cite{ilinKLPR2023}, e.g. see \cite[Theorem 8.2]{ilinKLPR2023}.
\end{rmk}

\begin{rmk}
    Consider a height-$1$ tree of the form
    \[\tau=
\vcenter{
    \hbox{
        \begin{tikzpicture}[xscale=1.5,yscale=1]
            \node [circle, draw, fill, inner sep =1pt, label = { 90: $A_1$ }] (node_0_0) at (0,0) {};
            \node [circle, draw, fill, inner sep =1pt, label = { 90: $A_2$ }] (node_1_0) at (1,0) {};
            \node [circle, draw, fill, inner sep =1pt, label = { 90: $A_r$ }] (node_3_0) at (3,0) {};
            \node [circle, draw, fill, inner sep =1pt] (node_1_-2) at (1.5,-2) {};
            \node [label = { 90: $\cdots$ }] at (2,0) {};
            \draw[] (node_1_-2) -- (node_0_0);
            \draw[] (node_1_-2) -- (node_1_0);
            \draw[] (node_1_-2) -- (node_3_0);
        \end{tikzpicture}
    }
}\in \Rtree([n]; a).\]
The same argument as in Theorem \ref{thm:homeo-permu-dual-cell} directly yields a homeomorphism
\[
\overline{W_\tau} \cong \prod_{i=1}^r \Pi_{A_i} \cong \Pi_{A_\bullet},
\]
which recovers the product decomposition of Proposition \ref{prop:a-proper-dual-cell-product-decmop} for this $\tau$.
\end{rmk}

            \begin{cor}
            \label{cor:wted-decomp-of-Wtau-bar}
                Let $A_{\bullet}=(A_1, A_2, \ldots, A_r)$ be a composition of $[n]$. Let $\tau\in \Rtree(A_{\bullet})$ be $a$-stable. Then
                $$\overline{W_{\tau}}\cong [-1,1]^{|E_{\Int}(\tau)|}\times \Pi_{|A_1|}\times \ldots \times \Pi_{|A_r|}.$$
                In particular, $W_{\tau}$ is a cell of dimension $|E_{\Int}(\tau)|+n-r$.
            \end{cor}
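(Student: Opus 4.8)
The plan is to assemble the statement directly from Proposition~\ref{prop:a-proper-dual-cell-product-decmop}, Theorem~\ref{thm:cl-dual-cell-homeo-to-pm1-to-E}, and Theorem~\ref{thm:homeo-permu-dual-cell}. First, note that $W_\tau$ depends only on the stable tree represented by the $a$-stable rooted tree $\tau \in \Rtree(A_\bullet)$, so Proposition~\ref{prop:a-proper-dual-cell-product-decmop} applies and yields a homeomorphism
\[
\overline{W_\tau} \cong \overline{W_{\tau_0}} \times \prod_{i=1}^r \overline{W_{\tau_i}},
\]
where $\tau_0 \in \StRtree([r])$ is the unweighted stable tree obtained from $\tau$ by replacing each leaf label $A_i$ by the singleton $i$, and $\tau_i$ is the unique $|A_i|$-stable tree with two leaves, one labeled $A_i$ and one labeled $\{\whitecircle\}$, both attached to the root.

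Next I would identify each factor. Passing from $\tau$ to $\tau_0$ only relabels leaves and does not alter the internal structure of the tree, so $E_{\Int}(\tau_0)$ and $E_{\Int}(\tau)$ have the same cardinality; Theorem~\ref{thm:cl-dual-cell-homeo-to-pm1-to-E} then gives $\overline{W_{\tau_0}} \cong [-1,1]^{E_{\Int}(\tau_0)} = [-1,1]^{|E_{\Int}(\tau)|}$. For the remaining factors, $\overline{W_{\tau_i}}$ lives in the $(|A_i|+2)$-marked Hassett space with weight parameter $a = |A_i|$, and $\tau_i$ is precisely the two-leaf tree with one leaf labeled by the $|A_i|$-element set $A_i$ and one leaf labeled by the singleton $\{\whitecircle\}$. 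After relabeling $A_i$ as $\{1, \ldots, |A_i|\}$ and letting $\whitecircle$ play the role of the distinguished ``far'' marked point $\{n\}$, this is exactly the situation of Theorem~\ref{thm:homeo-permu-dual-cell} with $n$ replaced by $|A_i|+1$; hence $\overline{W_{\tau_i}} \cong \Pi_{|A_i|}$.

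Combining the three identifications gives
\[
\overline{W_\tau} \cong [-1,1]^{|E_{\Int}(\tau)|} \times \Pi_{|A_1|} \times \cdots \times \Pi_{|A_r|},
\]
as claimed. Since $[-1,1]^{|E_{\Int}(\tau)|}$ is a closed ball and each $\Pi_{|A_i|}$ is a closed ball of dimension $|A_i|-1$, the product $\overline{W_\tau}$ is a closed ball, and $W_\tau = \Int(\overline{W_\tau})$ by the corresponding statement in each factor. For the dimension,
\[
\dim W_\tau = |E_{\Int}(\tau)| + \sum_{i=1}^r (|A_i| - 1) = |E_{\Int}(\tau)| + n - r,
\]
using $\sum_i |A_i| = n$.

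I do not expect a substantive obstacle: the topological content — the product decomposition, the cube structure in the unweighted case, and the identification of the ``small-gaps versus one large gap'' region with a permutahedron — is already contained in the three cited results, so the corollary is a matter of matching the relabelings correctly (in particular confirming that the $\whitecircle$-leaf of $\tau_i$ is the distinguished point in Theorem~\ref{thm:homeo-permu-dual-cell}) and tallying dimensions. The one point worth a line of justification is that the homeomorphisms of Proposition~\ref{prop:a-proper-dual-cell-product-decmop} are compatible with the cube and permutahedron identifications, but this is automatic since all of them are defined through the same distance-algorithm coordinates.
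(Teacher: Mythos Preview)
Your proposal is correct and follows essentially the same route as the paper: the paper's proof is a one-liner citing Proposition~\ref{prop:a-proper-dual-cell-product-decmop} and Theorem~\ref{thm:homeo-permu-dual-cell}, with the cube identification of $\overline{W_{\tau_0}}$ via Theorem~\ref{thm:cl-dual-cell-homeo-to-pm1-to-E} already noted in the remark preceding the corollary. You have simply unpacked these citations and made the relabeling and dimension count explicit.
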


            \begin{proof}
                The result follows immediately from Proposition~\ref{prop:a-proper-dual-cell-product-decmop} and Theorem~\ref{thm:homeo-permu-dual-cell}.
            \end{proof}

            \begin{rmk}\ 
                \begin{enumerate}
                    \item If $\tau$ is a binary tree, then it has exactly $r-2$ internal edges, so $\dim W_{\tau}=\dim \overline{W_{\tau}} = n-2$, which is equal to the dimension of the Hassett space $\overline{M_{0,\mcA(a)}}(\R)$.
                    \item In general, $\codim W_{\tau} = \sum_{v\in V_{\Int}(\tau)} (c(v)-2) $, where $c(v)$ is the number of children of the vertex $v$. This coincides with the formula given in Equation~\eqref{eqn:big-cube-codim-formula}.
                \end{enumerate}
            \end{rmk}

            \begin{cor}
                The cells $W_{\tau}$ form a cell decomposition of $\overline{M_{0, \mcA(a)}}(\R)$:
                $$\overline{M_{0, \mcA(a)}}(\R)=\coprod_{\tau\in \StRtree([n]; a)}W_{\tau}.$$
            \end{cor}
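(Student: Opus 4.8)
The plan is to assemble the preceding results into a verification that the $W_\tau$ satisfy the axioms of a finite (indeed regular) CW decomposition of $X := \overline{M_{0,\mcA(a)}}(\R)$. First I would record the easy structural facts: $X$ is compact and Hausdorff, being the real locus of a projective variety, and $\StRtree([n];a)$ is finite, so we are decomposing $X$ into finitely many pieces. Since $W_\tau = \bigcell_a^{-1}(\tau)$ by definition, the $W_\tau$ are automatically pairwise disjoint with union $X$; each is nonempty because Corollary~\ref{cor:wted-decomp-of-Wtau-bar} identifies it (up to homeomorphism) with the interior of a ball. Hence $\bigcell_a$ is surjective and $X = \coprod_{\tau} W_\tau$ as sets, which is the set-level content of the statement.

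Next I would produce the characteristic maps. Fix $\tau$ with leaf labels $A_1, \ldots, A_r$ and put $d_\tau := |E_{\Int}(\tau)| + n - r$. Corollary~\ref{cor:wted-decomp-of-Wtau-bar} supplies a homeomorphism
\[
\Phi_\tau : [-1,1]^{|E_{\Int}(\tau)|} \times \prod_{i=1}^r \Pi_{|A_i|} \xrightarrow{\ \sim\ } \overline{W_\tau},
\]
whose source is a product of closed balls, hence itself a closed ball of dimension $|E_{\Int}(\tau)| + \sum_i (|A_i|-1) = d_\tau$. Composing with a homeomorphism $D^{d_\tau} \cong [-1,1]^{|E_{\Int}(\tau)|}\times\prod_i \Pi_{|A_i|}$ and the inclusion $\overline{W_\tau}\hookrightarrow X$ gives a continuous map $\Phi_\tau : D^{d_\tau}\to X$. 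Combining Theorem~\ref{thm:cl-dual-cell-homeo-to-pm1-to-E} (for the interval factors), Theorem~\ref{thm:homeo-permu-dual-cell} (for the permutahedral factors) and Proposition~\ref{prop:a-proper-dual-cell-product-decmop} (for their compatibility), this homeomorphism carries $(-1,1)^{|E_{\Int}(\tau)|}\times\prod_i \Int(\Pi_{|A_i|})$ onto $W_\tau$; thus $\Phi_\tau$ restricts to a homeomorphism of $\Int D^{d_\tau}$ onto $W_\tau$, so $W_\tau$ is an open cell of dimension $d_\tau$, and $\Phi_\tau(\partial D^{d_\tau}) = \overline{W_\tau}\setminus W_\tau$.

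It then remains to check that the boundary of each cell lies in the union of strictly lower-dimensional cells. For this I would invoke Remark~\ref{rmk:properties-of-weighted-algorithm}(1), which gives $\overline{W_\tau} = \coprod_{\tau'\preceq\tau} W_{\tau'}$, so $\overline{W_\tau}\setminus W_\tau = \coprod_{\tau'\prec\tau} W_{\tau'}$. If $\tau'\prec\tau$, then $\tau'$ arises from $\tau$ by a nonempty sequence of internal-edge contractions and leaf splittings; contracting an internal edge lowers $|E_{\Int}|$ by $1$ and leaves $r$ fixed, while splitting a leaf into $k\ge 2$ consecutive leaves raises $r$ by $k-1$ and leaves $|E_{\Int}|$ fixed, so each elementary move drops $d_\bullet$ by at least $1$; hence $d_{\tau'}<d_\tau$. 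Since $X$ is Hausdorff and there are finitely many cells, the weak-topology requirement is automatic (a finite union of closed cells is closed), and these data constitute a regular CW structure on $X$ with the cells $W_\tau$.

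The proof is essentially bookkeeping: all of the geometric content — the explicit homeomorphisms to products of intervals and permutahedra and the identification of cell closures — was already carried out in Proposition~\ref{prop:a-proper-dual-cell-product-decmop}, Theorem~\ref{thm:homeo-permu-dual-cell}, Corollary~\ref{cor:wted-decomp-of-Wtau-bar} and Remark~\ref{rmk:properties-of-weighted-algorithm}. The one point deserving explicit argument, and the step I would treat most carefully, is the compatibility of the boundary relation $\prec$ on $\StRtree([n];a)$ with the dimension function $d_\tau = |E_{\Int}(\tau)| + n - r$, since this is precisely what makes the skeletal filtration of the complex well-behaved.
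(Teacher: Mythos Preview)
Your proposal is correct and follows the approach the paper has in mind: the paper states this corollary without any proof, treating it as immediate from Corollary~\ref{cor:wted-decomp-of-Wtau-bar} and Remark~\ref{rmk:properties-of-weighted-algorithm}, and your write-up simply makes the verification of the CW axioms explicit. Your careful check that $d_{\tau'} < d_\tau$ whenever $\tau' \prec \tau$ is a useful addition, since the paper does not spell this out.
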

As discussed in Remark \ref{rmk:properties-of-weighted-algorithm}, for each $\tau \in \StRtree([n]; a)$, the cell $W_\tau$ is the image of several cells under the reduction map from $\DMumford{n+1}(\R)$ or from $\hassett{a-1}(\R)$. This decomposition, highlighting the structure of $W_\tau$ in terms of permutahedra, can be seen in Figure \ref{fig:hexagon-merger} for $\Pi_3$ and Figure \ref{fig:illust-of-3d-permutahedron} for $\Pi_4$.

\begin{figure}
    \centering
    \includegraphics[width=0.3\linewidth]{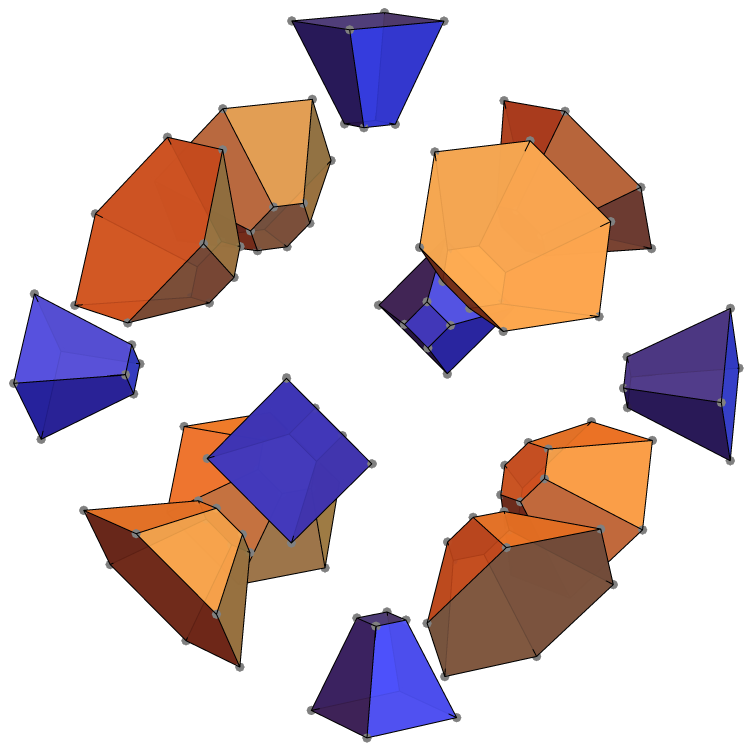} \quad
    \includegraphics[width=0.3\linewidth]{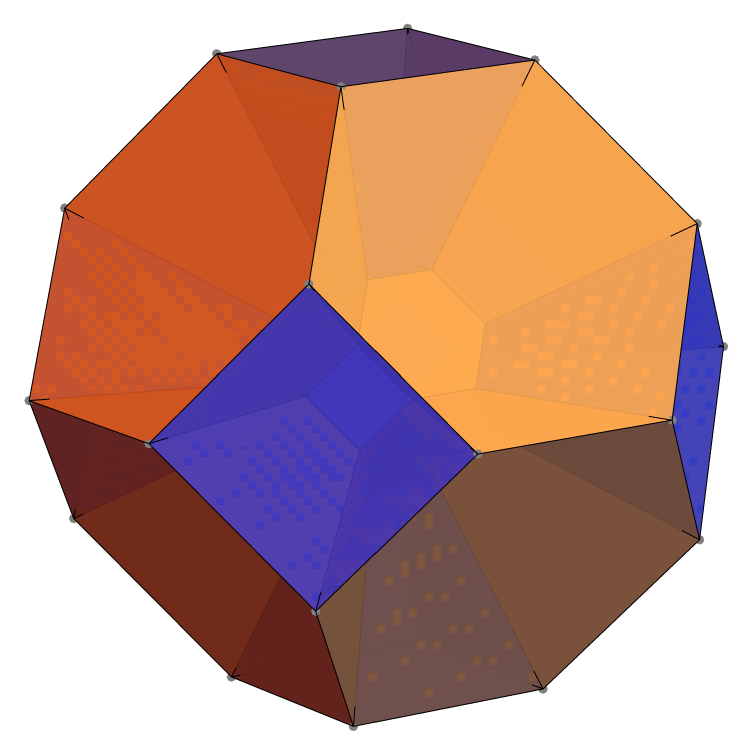}

    \caption{With $n=5$, there are three-dimensional permutahedra $\overline{W_\tau} \cong \Pi_4$ in $\DMumford{\mcA(4)}(\R) \cong \R\Pj^3$. Such a cell is the image under the reduction map of seven polytopal cells $\overline{W_{\tau'}} \subseteq \DMumford{\mcA(3)}(\R)$, shown above. Each antipodal pair of polytopes represents a single cell $\overline{W_{\tau'}}$, by gluing the inner two faces via the antipodal map. Thus, there are four copies of $[-1, 1] \times \Pi_3$ (hexagonal prisms, in orange) and three copies of $[-1, 1] \times \Pi_2 \times \Pi_2$ (cubes, in blue). The inner boundary is homeomorphic to $\mathbb{RP}^2$ (the exceptional divisor) and the reduction map contracts it to a point to yield $\overline{W_\tau} \cong \Pi_4$. Note that each hexagonal prism is itself the image of three cubes from $\DMumford{n+1}(\R)$; that decomposition resembles Figure \ref{fig:hexagon-merger} (crossed with $\Pi_2$).}
    \label{fig:illust-of-3d-permutahedron}
\end{figure}

    \begin{rmk}
        For an arbitrary vector of weights $\mcA = (a_1, \ldots, a_n, a_{n+1} = 1)$, there is an analogous procedure for compressing rooted trees from $\Rtree([n])$ to render them $\mcA$-stable. Likewise, there is an analogous $\mcA$-weighted distance algorithm, given by running the standard distance algorithm and compressing the result to an $\mcA$-stable tree. The results of this section then carry through to show that the $\mcA$-stable trees index a dual cell decomposition of $\overline{M_{0, \mcA}}(\R)$ by products of permutahedra and intervals. Indeed, similar remarks apply to the simplicially-stable moduli spaces of \cite{blankersB2022} containing an isolated vertex. However, the resulting decomposition is typically not $S_n$-symmetric, so we do not know a useful way to extract a description of $\pi_1(\overline{M_{0, \mcA}}(\R))$ in general.
    \end{rmk}

\subsubsection{The $2$-skeleton and presentation of the weighted cactus group} \label{sec:2-skeleton-and-presentation}
    We now classify the dual cells $W_{\tau}$ up to dimension two such that $\overline{W_{\tau}}$ contains the permutation point $(\R\Pj^1; 1,2,\ldots, n, \infty)$. If $\tau$ has leaf labels $A_1, \ldots, A_r$, this condition is equivalent to requiring that $A_\bullet$ be compatible with the identity permutation.
        \begin{itemize}
            \item(Dimension 0) The $0$-cells are precisely the permutation points. We note that the base point is itself a $0$-cell.
            \item(Dimension 1) There are two types of 1-cell adjacent to the base point.
            \begin{enumerate}
                \item\label{enumitem:1cell-type-no-Eint}
                 $A_\bullet$ has one part $\{i,i+1\}$ of size $2$ and all other parts singletons, and $\tau$ has no internal edges. The other endpoint of $\overline{W_{\tau}}$ is the permutation point corresponding to $w_{i,i+1}\in S_n$.
                \item\label{enumitem:1cell-type-one-Eint} $A_\bullet = (\{1\}, \ldots, \{n\})$ and $\tau$ has one internal edge, whose subtree contains the leaves $\{p\},\ldots,\{q\}$ for some $p < q$. By $a$-stability, we must have $q+1-p>a$.  The other endpoint of $\overline{W_{\tau}}$ is the permutation point corresponding to $w_{p,q}\in S_n$.
            \end{enumerate}
            \item(Dimension 2) There are four kinds of $2$-cell adjacent to the base point. We list them according to their combinatorial types. See Figure \ref{fig:2-cell-pictures} for representative examples.
            
            \begin{enumerate}
                \item \label{enumitem:2cell-1part-size3} (Hexagon, $\Pi_3$): $A_\bullet$ has one part of size $3$, $\{i, i+1, i+2\}$ for some $i$, and $\tau$ has no internal edges.
                \item \label{enumitem:2cell-2part-size2}
                (Square, $\Pi_2 \times \Pi_2$): $A_\bullet$ has two parts of size 2, $\{i,i+1\}$ and $\{j,j+1\}$ with $i+1<j$, and $\tau$ has no internal edges.
                \item \label{enumitem:2cell-1part-size2} 
                (Square, $[-1, 1] \times \Pi_2$): $A_\bullet$ has one part of size $2$, $\{i, i+1\}$, and $\tau$ has one internal edge, whose subtree contains the labels $p, \ldots, q$, which may or may not include the leaf labeled $\{i, i+1\}$. By $a$-stability, we must have $q+1-p > a$.
                \item \label{enumitem:2cell-all-part-singleton}
                (Square, $[-1, 1]^2$): $A_{\bullet} = (\{1\}, \ldots, \{n\})$ and $\tau$ has two internal edges, which correspond to either nested or disjoint subintervals of leaves (similar to the unweighted case); see Figure \ref{fig:2-cell-trees-of-moduli}. By $a$-stability, the subintervals must have length $> a$.
            \end{enumerate}
        
        \end{itemize}

\begin{figure}
\centering
\begin{tabular}{cc}
\begin{tikzpicture}
    \def \radius {3.3}
    \fill[cyan!40!lime!20] (0:\radius) -- (60:\radius) -- (120:\radius) -- (180:\radius) -- (240:\radius) -- (300:\radius) -- cycle;
    
    \draw (\radius, 0) node {$\bullet$}
    	-- (60:\radius) node {$\bullet$}
	-- (120:\radius) node {$\bullet$}
	-- (180:\radius) node {$\bullet$}
	-- (240:\radius) node {$\bullet$}
	-- (300:\radius) node {$\bullet$}
	-- (360:\radius) node {$\bullet$};

    \draw (0, -0.5) -- (-0.5, 0) node[above] {$123$};
    \draw (0, -0.5) -- (0.5, 0) node[above] {$4$};

   \begin{scope}[shift={(120:\radius*1.05)}, scale=0.6]
   \foreach \x/\i in {-0.6/1, -0.2/2, 0.2/3, 0.6/4}
    	\draw (0, 0) -- (\x, 0.5) node[above] {$\i$};
   \end{scope}

   \begin{scope}[shift={(150:\radius*0.9)}]
   \foreach \x/\i in {-0.6/12, 0/3, 0.6/4}
    	\draw (0, -0.5) -- (\x, 0) node[above] {$\i$};
   \end{scope}
   \begin{scope}[shift={(210:\radius*0.9)}]
   \foreach \x/\i in {-0.6/2, 0/13, 0.6/4}
    	\draw (0, -0.5) -- (\x, 0) node[above] {$\i$};
   \end{scope}
   \begin{scope}[shift={(270:\radius*0.8)}]
   \foreach \x/\i in {-0.6/23, 0/1, 0.6/4}
    	\draw (0, -0.5) -- (\x, 0) node[above] {$\i$};
   \end{scope}
   \begin{scope}[shift={(330:\radius*0.9)}]
   \foreach \x/\i in {-0.6/3, 0/12, 0.6/4}
    	\draw (0, -0.5) -- (\x, 0) node[above] {$\i$};
   \end{scope}
   \begin{scope}[shift={(30:\radius*0.9)}]
   \foreach \x/\i in {-0.6/13, 0/2, 0.6/4}
    	\draw (0, -0.5) -- (\x, 0) node[above] {$\i$};
   \end{scope}
   \begin{scope}[shift={(90:\radius*0.95)}]
   \foreach \x/\i in {-0.6/1, 0/23, 0.6/4}
    	\draw (0, -0.5) -- (\x, 0) node[above] {$\i$};
   \end{scope}
\end{tikzpicture}

&

\begin{tikzpicture}
    \def \radius {2.7}
    \fill[cyan!20] (\radius, \radius) -- (\radius, -\radius) -- (-\radius, -\radius) -- (-\radius, \radius) -- cycle;

    \draw  (\radius, \radius)   node {$\bullet$}
        -- (\radius, -\radius)  node {$\bullet$}
        -- (-\radius, -\radius) node {$\bullet$}
        -- (-\radius, \radius)  node {$\bullet$}
        -- cycle;

    \draw (0, -0.5) -- (-0.5, 0) node[above] {$12$};
    \draw (0, -0.5) -- (0.5, 0) node[above] {$34$};

   \begin{scope}[shift={(-\radius*1.05, \radius*1.05)}, scale=0.6]
   \foreach \x/\i in {-0.6/1, -0.2/2, 0.2/3, 0.6/4}
    	\draw (0, 0) -- (\x, 0.5) node[above] {$\i$};
   \end{scope}

   \begin{scope}[shift={(-\radius+0.25, 0)}]
   \foreach \x/\i in {-0.6/12, 0/3, 0.6/4}
    	\draw (0, -0.5) -- (\x, 0) node[above] {$\i$};
   \end{scope}
   \begin{scope}[shift={(0, -\radius+0.25)}]
   \foreach \x/\i in {-0.6/2, 0/1, 0.6/34}
    	\draw (0, -0.5) -- (\x, 0) node[above] {$\i$};
   \end{scope}
   \begin{scope}[shift={(\radius-0.25, 0)}]
   \foreach \x/\i in {-0.6/12, 0/4, 0.6/3}
    	\draw (0, -0.5) -- (\x, 0) node[above] {$\i$};
   \end{scope}
   \begin{scope}[shift={(0, \radius+0.25)}]
   \foreach \x/\i in {-0.6/1, 0/2, 0.6/34}
    	\draw (0, -0.5) -- (\x, 0) node[above] {$\i$};
   \end{scope}
\end{tikzpicture}

\\

\begin{tikzpicture}
    \def \radius {2.7}
    \fill[yellow!20] (\radius, \radius) -- (\radius, -\radius) -- (-\radius, -\radius) -- (-\radius, \radius) -- cycle;

    \draw  (\radius, \radius)   node {$\bullet$}
        -- (\radius, -\radius)  node {$\bullet$}
        -- (-\radius, -\radius) node {$\bullet$}
        -- (-\radius, \radius)  node {$\bullet$}
        -- cycle;

   \draw (0, -0.5) -- (-0.3, -0.3);
   \draw (-0.3, -0.3) -- (-0.6, 0) node[above] {$12$};
   \draw (-0.3, -0.3) -- (0, 0) node[above] {$3$};
   \draw (0, -0.5) -- (0.6, 0) node[above] {$4$};

   \begin{scope}[shift={(-\radius*1.05, \radius*1.05)}, scale=0.6]
   \foreach \x/\i in {-0.6/1, -0.2/2, 0.2/3, 0.6/4}
    	\draw (0, 0) -- (\x, 0.5) node[above] {$\i$};
   \end{scope}

   \begin{scope}[shift={(-\radius+0.15, 0)}]
   \draw (0, -0.5) -- (-0.3, -0.3);
   \draw (-0.3, -0.3) -- (-0.6, 0) node[above] {$1$};
   \draw (-0.3, -0.3) -- (-0.3, 0) node[above] {$2$};
   \draw (-0.3, -0.3) -- (0, 0) node[above] {$3$};
   \draw (0, -0.5) -- (0.6, 0) node[above] {$4$};
   \end{scope}
   \begin{scope}[shift={(0, -\radius+0.25)}]
   \foreach \x/\i in {-0.6/3, 0/12, 0.6/4}
    	\draw (0, -0.5) -- (\x, 0) node[above] {$\i$};
   \end{scope}
   \begin{scope}[shift={(\radius-0.25, 0)}]
   \draw (0, -0.5) -- (-0.3, -0.3);
   \draw (-0.3, -0.3) -- (-0.6, 0) node[above] {$2$};
   \draw (-0.3, -0.3) -- (-0.3, 0) node[above] {$1$};
   \draw (-0.3, -0.3) -- (0, 0) node[above] {$3$};
   \draw (0, -0.5) -- (0.6, 0) node[above] {$4$};
   \end{scope}
   \begin{scope}[shift={(0, \radius+0.25)}]
   \foreach \x/\i in {-0.6/12, 0/3, 0.6/4}
    	\draw (0, -0.5) -- (\x, 0) node[above] {$\i$};
   \end{scope}
\end{tikzpicture}

&

\begin{tikzpicture}
    \def \radius {2.7}
    \fill[yellow!20] (\radius, \radius) -- (\radius, -\radius) -- (-\radius, -\radius) -- (-\radius, \radius) -- cycle;

    \draw  (\radius, \radius)   node {$\bullet$}
        -- (\radius, -\radius)  node {$\bullet$}
        -- (-\radius, -\radius) node {$\bullet$}
        -- (-\radius, \radius)  node {$\bullet$}
        -- cycle;

   \draw (0, -0.5) -- (-0.3, -0.3);
   \draw (-0.3, -0.3) -- (-0.6, 0) node[above] {$1$};
   \draw (-0.3, -0.3) -- (-0.3, 0) node[above] {$2$};
   \draw (-0.3, -0.3) -- (0, 0) node[above] {$3$};
   \draw (0, -0.5) -- (0.6, 0) node[above] {$45$};

   \begin{scope}[shift={(-\radius*1.05, \radius*1.05)}, scale=0.6]
   \foreach \x/\i in {-0.8/1, -0.4/2, 0/3, 0.4/4, 0.8/5}
    	\draw (0, 0) -- (\x, 0.5) node[above] {$\i$};
   \end{scope}

   \begin{scope}[shift={(-\radius+0.15, 0)}]
   \draw (0, -0.5) -- (-0.3, -0.3);
   \draw (-0.3, -0.3) -- (-0.6, 0) node[above] {$1$};
   \draw (-0.3, -0.3) -- (-0.3, 0) node[above] {$2$};
   \draw (-0.3, -0.3) -- (0, 0) node[above] {$3$};
   \draw (0, -0.5) -- (0.35, 0) node[above] {$4$};
   \draw (0, -0.5) -- (0.65, 0) node[above] {$5$};
   \end{scope}
   \begin{scope}[shift={(0, -\radius+0.25)}]
   \foreach \x/\i in {-0.6/3, -0.2/2, 0.2/1, 0.6/45}
    	\draw (0, -0.5) -- (\x, 0) node[above] {$\i$};
   \end{scope}
   \begin{scope}[shift={(\radius-0.15, 0)}]
   \draw (0, -0.5) -- (-0.3, -0.3);
   \draw (-0.3, -0.3) -- (-0.6, 0) node[above] {$1$};
   \draw (-0.3, -0.3) -- (-0.3, 0) node[above] {$2$};
   \draw (-0.3, -0.3) -- (0, 0) node[above] {$3$};
   \draw (0, -0.5) -- (0.35, 0) node[above] {$5$};
   \draw (0, -0.5) -- (0.65, 0) node[above] {$4$};
   \end{scope}
   \begin{scope}[shift={(0, \radius+0.25)}]
   \foreach \x/\i in {-0.6/1, -0.2/2, 0.2/3, 0.6/45}
    	\draw (0, -0.5) -- (\x, 0) node[above] {$\i$};
   \end{scope}
\end{tikzpicture}

\\

\begin{tikzpicture}
    \def \radius {2.7}
    \fill[magenta!20] (\radius, \radius) -- (\radius, -\radius) -- (-\radius, -\radius) -- (-\radius, \radius) -- cycle;

    \draw  (\radius, \radius)   node {$\bullet$}
        -- (\radius, -\radius)  node {$\bullet$}
        -- (-\radius, -\radius) node {$\bullet$}
        -- (-\radius, \radius)  node {$\bullet$}
        -- cycle;

   \draw (0, -0.6) -- (-0.2, -0.4);
   \draw (-0.2, -0.4) -- (-0.3, -0.2);
   \draw (-0.3, -0.2) -- (-0.6, 0) node[above] {$1$};
   \draw (-0.3, -0.2) -- (-0.3, 0) node[above] {$2$};
   \draw (-0.3, -0.2) -- (0, 0) node[above] {$3$};
   \draw (-0.2, -0.4) -- (0.4, 0) node[above] {$4$};
   \draw (0, -0.6) -- (0.8, 0) node[above] {$5$};

   \begin{scope}[shift={(-\radius*1.05, \radius*1.05)}, scale=0.6]
   \foreach \x/\i in {-0.8/1, -0.4/2, 0/3, 0.4/4, 0.8/5}
    	\draw (0, 0) -- (\x, 0.5) node[above] {$\i$};
   \end{scope}

   \begin{scope}[shift={(-\radius+0.15, 0)}]
   \draw (0, -0.5) -- (-0.3, -0.3);
   \draw (-0.3, -0.3) -- (-0.6, 0) node[above] {$1$};
   \draw (-0.3, -0.3) -- (-0.3, 0) node[above] {$2$};
   \draw (-0.3, -0.3) -- (0, 0) node[above] {$3$};
   \draw (0, -0.5) -- (0.35, 0) node[above] {$4$};
   \draw (0, -0.5) -- (0.65, 0) node[above] {$5$};
   \end{scope}
   \begin{scope}[shift={(0, -\radius+0.25)}]
   \draw (0, -0.6) -- (-0.2, -0.4);
   \draw (-0.2, -0.4) -- (-0.6, 0) node[above] {$3$};
   \draw (-0.2, -0.4) -- (-0.3, 0) node[above] {$2$};
   \draw (-0.2, -0.4) -- (0, 0) node[above] {$1$};
   \draw (-0.2, -0.4) -- (0.3, 0) node[above] {$4$};
   \draw (0, -0.6) -- (0.8, 0) node[above] {$5$};
   \end{scope}
   \begin{scope}[shift={(\radius-0.15, 0)}]
   \draw (0, -0.5) -- (-0.3, -0.3);
   \draw (-0.3, -0.3) -- (-0.6, 0) node[above] {$1$};
   \draw (-0.3, -0.3) -- (-0.3, 0) node[above] {$2$};
   \draw (-0.3, -0.3) -- (0, 0) node[above] {$3$};
   \draw (0, -0.5) -- (0.35, 0) node[above] {$5$};
   \draw (0, -0.5) -- (0.65, 0) node[above] {$4$};
   \end{scope}
   \begin{scope}[shift={(0, \radius+0.25)}]
   \draw (0, -0.6) -- (-0.2, -0.4);
   \draw (-0.2, -0.4) -- (-0.6, 0) node[above] {$1$};
   \draw (-0.2, -0.4) -- (-0.3, 0) node[above] {$2$};
   \draw (-0.2, -0.4) -- (0, 0) node[above] {$3$};
   \draw (-0.2, -0.4) -- (0.3, 0) node[above] {$4$};
   \draw (0, -0.6) -- (0.8, 0) node[above] {$5$};
   \end{scope}
\end{tikzpicture}

&

\begin{tikzpicture}
    \def \radius {2.7}
    \fill[magenta!20] (\radius, \radius) -- (\radius, -\radius) -- (-\radius, -\radius) -- (-\radius, \radius) -- cycle;

    \draw  (\radius, \radius)   node {$\bullet$}
        -- (\radius, -\radius)  node {$\bullet$}
        -- (-\radius, -\radius) node {$\bullet$}
        -- (-\radius, \radius)  node {$\bullet$}
        -- cycle;

   \draw (0, -0.5) -- (-0.4, -0.3);
   \draw (-0.4, -0.3) -- (-0.65, 0) node[above] {$1$};
   \draw (-0.4, -0.3) -- (-0.4, 0) node[above] {$2$};
   \draw (-0.4, -0.3) -- (-0.15, 0) node[above] {$3$};
   \draw (0, -0.5) -- (0.4, -0.3);
   \draw (0.4, -0.3) -- (0.15, 0) node[above] {$4$};
   \draw (0.4, -0.3) -- (0.4, 0) node[above] {$5$};
   \draw (0.4, -0.3) -- (0.65, 0) node[above] {$6$};

   \begin{scope}[shift={(-\radius*1.05, \radius*1.05)}, scale=0.6]
   \foreach \x/\i in {-1/1, -0.6/2, -0.2/3, 0.2/4, 0.6/5, 1/6}
    	\draw (0, 0) -- (\x, 0.5) node[above] {$\i$};
   \end{scope}

   \begin{scope}[shift={(-\radius+0.28, 0)}]
   \draw (0, -0.5) -- (-0.4, -0.3);
   \draw (-0.4, -0.3) -- (-0.65, 0) node[above] {$1$};
   \draw (-0.4, -0.3) -- (-0.4, 0) node[above] {$2$};
   \draw (-0.4, -0.3) -- (-0.15, 0) node[above] {$3$};
   \draw (0, -0.5) -- (0.15, 0) node[above] {$4$};
   \draw (0, -0.5) -- (0.4, 0) node[above] {$5$};
   \draw (0, -0.5) -- (0.65, 0) node[above] {$6$}; \end{scope}
   \begin{scope}[shift={(0, -\radius+0.2)}]
   \draw (0, -0.5) -- (-0.65, 0) node[above] {$3$};
   \draw (0, -0.5) -- (-0.4, 0) node[above] {$2$};
   \draw (0, -0.5) -- (-0.15, 0) node[above] {$1$};
   \draw (0, -0.5) -- (0.4, -0.3);
   \draw (0.4, -0.3) -- (0.15, 0) node[above] {$4$};
   \draw (0.4, -0.3) -- (0.4, 0) node[above] {$5$};
   \draw (0.4, -0.3) -- (0.65, 0) node[above] {$6$};
   \end{scope}
   \begin{scope}[shift={(\radius-0.28, 0)}]
   \draw (0, -0.5) -- (-0.4, -0.3);
   \draw (-0.4, -0.3) -- (-0.65, 0) node[above] {$1$};
   \draw (-0.4, -0.3) -- (-0.4, 0) node[above] {$2$};
   \draw (-0.4, -0.3) -- (-0.15, 0) node[above] {$3$};
   \draw (0, -0.5) -- (0.15, 0) node[above] {$6$};
   \draw (0, -0.5) -- (0.4, 0) node[above] {$5$};
   \draw (0, -0.5) -- (0.65, 0) node[above] {$4$};
   \end{scope}
   \begin{scope}[shift={(0, \radius+0.2)}]
   \draw (0, -0.5) -- (-0.65, 0) node[above] {$1$};
   \draw (0, -0.5) -- (-0.4, 0) node[above] {$2$};
   \draw (0, -0.5) -- (-0.15, 0) node[above] {$3$};
   \draw (0, -0.5) -- (0.4, -0.3);
   \draw (0.4, -0.3) -- (0.15, 0) node[above] {$4$};
   \draw (0.4, -0.3) -- (0.4, 0) node[above] {$5$};
   \draw (0.4, -0.3) -- (0.65, 0) node[above] {$6$};
   \end{scope}
\end{tikzpicture}             
\end{tabular}
        \caption{Dual $a$-stable $2$-cells.
        Top left: $\Pi_3$ ($a=3)$.
        Top right: $\Pi_2 \times \Pi_2$ ($a=2$).
        Second row: $[-1, 1] \times \Pi_2$ ($a=2$).
        Third row: $[-1, 1]^2$ ($a=2$).
        }
        \label{fig:2-cell-pictures}
    \end{figure}
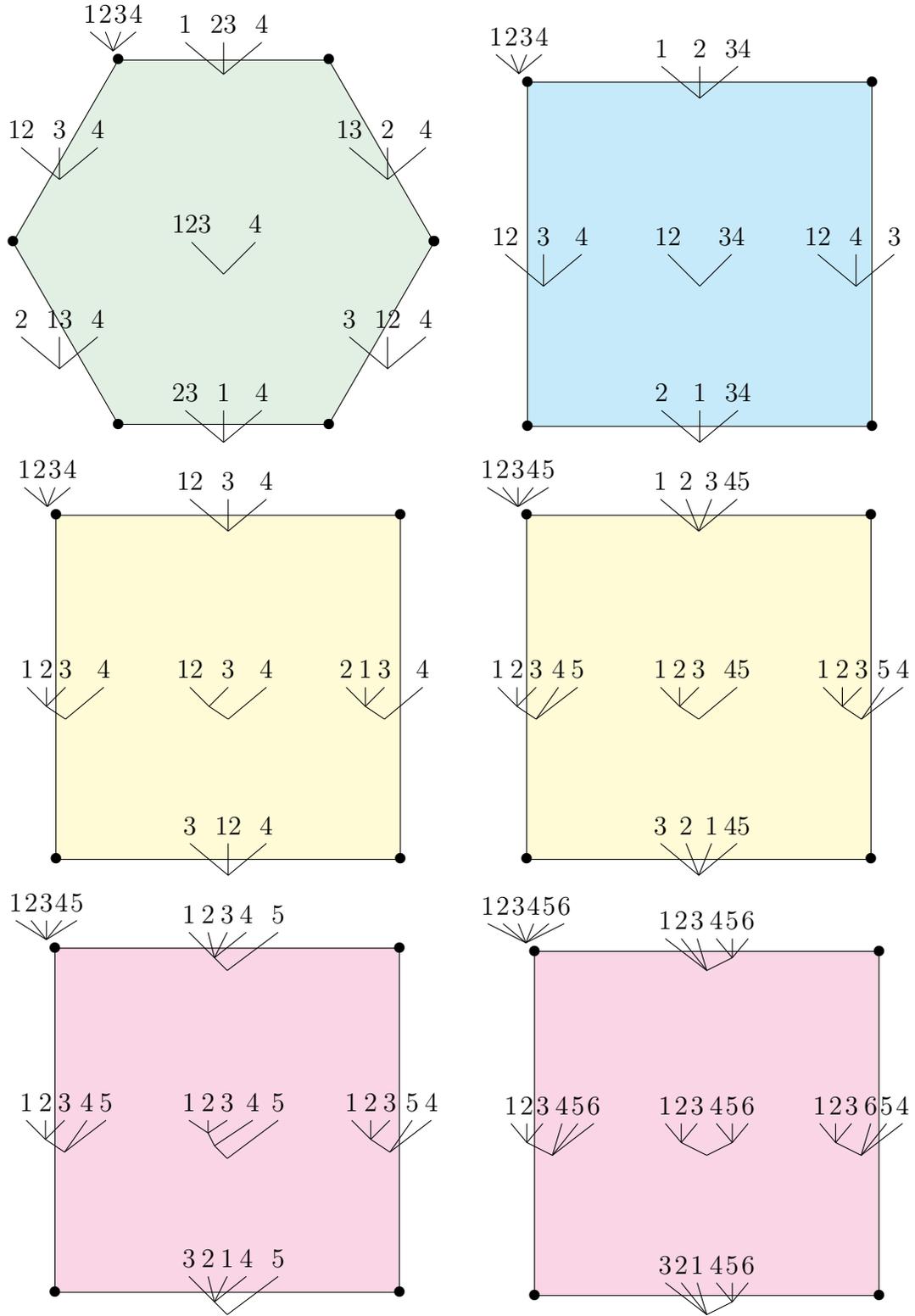

    \begin{rmk}
        As in the unweighted case, the cell decomposition of $\hassett{a}(\R)$ lifts to the double cover $\hassettDC{a}(\R)$, with dual cells indexed by $a$-stable trees, but where the root vertex is never considered flippable. The preimage of the dual cell $W_{\tau}\subseteq \hassettDC{a}(\R)$ is thus the disjoint union of two cells that differ by flipping at the root. The remaining details are essentially the same as in the unweighted case discussed in \ref{subsec:cubes-in-2:1-cover-unweighted}, and will be omitted here.
    \end{rmk}        

        \begin{proof}[Proof of Theorem~\ref{thm:main-thm-pres-of-Jna}]     
        Recall that the $S_n$-action on $\overline{\DC{M}_{0,n+1}}(\R)$ is simply transitive on the dual $0$-cells. This action descends to an $S_n$-action on $\overline{\DC{M}_{0,\mcA(a)}}(\R)$ simply transitive on its dual $0$-cells. Hence, an argument similar to the one given in \ref{subsec:cubes-in-2:1-cover-unweighted} and Theorem~\ref{thm:pres-pi1GX-simply-transitive} gives a presentation for $\tilde{J}_n^a$. Proposition~\ref{prop:group-pres-extend-by-Zmod2} then yields the desired presentation of $J_n^a$.

        It remains to obtain the relations given in Theorem~\ref{thm:main-thm-pres-of-Jna} (after omitting the $s_{1,n}$ generator) from the 2-skeleton of the Hassett space. The $1$-cells in the Hassett space contain pairs of endpoints of the form $\{x_0, w_{p,q}x_0\}$ for some 0-cell $x_0$, where either $p=q+1$ or $a<q-p+1<n$. Since $w_{p,q}$ is an involution, we may unambiguously label the $1$-cell with a formal symbol $s_{p,q}$, and we have $s_{p, q}^2 = 1$ in $\tilde{J}_n^a$.
        
        The remaining relations come from the boundaries of the $2$-cells adjacent to the base point. It is straightforward to verify that the 2-cells of types \eqref{enumitem:2cell-2part-size2}, \eqref{enumitem:2cell-1part-size2}, and \eqref{enumitem:2cell-all-part-singleton} yield the commuting relations $s_{p,q}s_{m,r}=s_{m,r}s_{p,q}$ and the cactus relations $s_{p,q}s_{m,r}=s_{p+q-r,p+q-m}s_{p,q}$. Finally, those of type \eqref{enumitem:2cell-1part-size3} yield the braid relations $(s_{i,i+1}s_{i+1,i+2})^3=1$. (See Figure \ref{fig:hexagon-merger-2}.) Theorem~\ref{thm:main-thm-pres-of-Jna} now follows from Proposition~\ref{prop:group-pres-extend-by-Zmod2}.
        \end{proof}
        
        \begin{rmk}
            If $a=1$, then all the $2$-cells are of type \eqref{enumitem:2cell-all-part-singleton}; and if $a=2$, then the cells are all of types \eqref{enumitem:2cell-2part-size2}, \eqref{enumitem:2cell-1part-size2}, and \eqref{enumitem:2cell-all-part-singleton}. In either case, we recover the presentations of $J_n$ and $\tilde{J}_n$.
        \end{rmk}

    \section{
    Generalized braid relations
    }\label{sec:group-weighted-cactus}

    As discussed in the introduction, the reduction map 
    $$\DMumford{n+1}(\R)\onto \DMumford{\mcA}(\R)$$
    induces a surjection $J_n\onto J_n^a$ of $S_n$-equivariant fundamental groups; see Equation \eqref{eqn:surjection-of-cactus-groups}. We conclude with a brief discussion of the kernel of this map. In particular, when $a \geq 3$, Theorem \ref{thm:main-thm-pres-of-Jna} gives a presentation of $J_n^a$ using a subset of the generators of $J_n$, namely the $s_{p,q}$ for which $q-p = 1$ or $q-p \in \{a, a+1, \ldots, n-1\}$. The remaining generators yield certain words in the kernel of the quotient map, which we refer to as the \emph{generalized braid relations}. It is interesting to observe that these relations are, themselves, straightforward to establish by a geometric argument, essentially because the $p$-th through $q$-th marked points can collide without bubbling.

            \begin{lem}\label{lem:gen-braid-rels}
               Assume $a\ge 3$. Write $\sigma_i = s_{i,i+1}$ and $s_{i,i} = 1$. For $2 \le q-p \le a-1$, we have the \textbf{generalized braid relations}
               \[
               s_{p,q}s_{p, q-1}\sigma_{q-1} \hdots \sigma_p, \quad
               s_{p,q}\sigma_{q-1} \hdots \sigma_p s_{p+1,q}
               \in \ker(J_n \twoheadrightarrow J_n^a).\]
            \end{lem}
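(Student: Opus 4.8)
The plan is to show directly that the image in $J_n^a = \pi_1^{S_n}(\hassett{a}(\R), C_{\mathrm{id}})$ of each of the two words is trivial; this is independent of (and gives a second route to) the braid relations in Theorem~\ref{thm:main-thm-pres-of-Jna}. Write $W$ for one of the words and recall $\sigma_i = s_{i,i+1}$. Each generator $s_{m,r}$ occurring in $W$ maps, under $J_n \twoheadrightarrow J_n^a$, to the pair $(w_{m,r}, \rho\circ\hat s_{m,r})$, where $\rho:\DMumford{n+1}(\R)\twoheadrightarrow\hassett{a}(\R)$ is the ($S_n$-equivariant) reduction map and $\hat s_{m,r}$ is the collision path of Section~\ref{subsec:rel-to-cactus-group}. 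Hence the image of $W$ is a pair $(g,\ell)$, where $g\in S_n$ is the product of the corresponding interval-reversals and $\ell$ is the concatenation of the appropriate $S_n$-translates of the paths $\rho\circ\hat s_{m,r}$, a loop at $C_{\mathrm{id}}$ once $g=\mathrm{id}$. By Definition~\ref{def:equivariant-fundamental-group}, it therefore suffices to prove that $g=\mathrm{id}$ in $S_n$ and that $\ell$ is null-homotopic in $\hassett{a}(\R)$.

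For the first claim I would use the standard reduced-word identities for the longest element of a parabolic subgroup of $S_n$, namely $w_{p,q} = \sigma_p\sigma_{p+1}\cdots\sigma_{q-1}\,w_{p,q-1}$ and $w_{p,q} = w_{p+1,q}\,\sigma_p\sigma_{p+1}\cdots\sigma_{q-1}$, together with $\sigma_i^2 = \mathrm{id}$. Substituting these into $g = w_{p,q}\,w_{p,q-1}\,\sigma_{q-1}\cdots\sigma_p$ and $g = w_{p,q}\,\sigma_{q-1}\cdots\sigma_p\,w_{p+1,q}$, respectively, collapses both products to the identity.

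For the null-homotopy I would exhibit a contractible subspace $V\subseteq\hassett{a}(\R)$ with $\ell\subseteq V$. This is the only place the hypothesis $q-p+1\le a$ is used: the marked points $x_p,\ldots,x_q$ then have total weight $\tfrac{q-p+1}{a}\le 1$, so they may collide in any multiplicities and any order while the curve stays irreducible and $a$-stable — no bubbling occurs. Let $V$ be the set of curves $(\R\Pj^1; x_\bullet)$ with $x_{n+1}=\infty$, with $x_j = j$ for all $j\notin\{p,\ldots,q\}$, and with $x_p,\ldots,x_q$ confined to the open interval between positions $p-1$ and $q+1$ (unbounded on the appropriate side if $p=1$ or $q=n$; note $\{p,\ldots,q\}\ne[n]$ since $q-p+1\le a\le n-1$, so this makes sense). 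Every such tuple is a valid $a$-stable curve. When at least two indices lie outside $\{p,\ldots,q\}$ the residual projective automorphisms are killed and $V$ is a cube $\cong(p-1,q+1)^{q-p+1}$; in the remaining case ($q-p+1 = n-1$, so $a = n-1$ and $\hassett{a}(\R)\cong\R\Pj^{n-2}$) one residual rescaling survives and $V$ is an open simplex. In either case $V$ is contractible.

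Finally I would verify $\ell\subseteq V$. Every interval-reversal appearing in $W$ reverses a sub-interval of $\{p,\ldots,q\}$, so each $S_n$-translate used to build $\ell$ is by a permutation supported on $\{p,\ldots,q\}$; such permutations preserve $V$. From the explicit formulas for $\hat s_{p,q}$, $\hat s_{p,q-1}$ and the $\hat\sigma_i$ ($p\le i\le q-1$), the moving points never exit $[p,q]$, so each of these paths — and hence each translate of it — lies in $V$. Thus $\ell$ is a loop in the contractible space $V$, so it is null-homotopic in $\hassett{a}(\R)$, and the image of $W$ in $J_n^a$ is $(\mathrm{id},\text{const}) = \mathrm{id}$. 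The main (and really only) obstacle is this last bookkeeping step — tracking that the twists stay supported on $\{p,\ldots,q\}$ and that $V$ is genuinely contractible, including in the degenerate normalization — whereas the conceptual heart is exactly the remark preceding the lemma: collisions among $x_p,\ldots,x_q$ require no bubbling, leaving enough room to contract the loop.
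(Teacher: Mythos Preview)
Your proposal is correct and follows essentially the same approach as the paper: both arguments first check that the word maps to the identity in $S_n$, then exhibit the resulting loop inside a contractible (or simply connected) subspace of $\hassett{a}(\R)$ obtained by freezing the marked points outside $\{p,\ldots,q\}$ and letting the remaining ones vary freely, which is legitimate precisely because $q-p+1\le a$ prevents bubbling. The paper fixes $x_1,\ldots,x_p,x_{q+1},\ldots,x_{n+1}$ and uses the embedded copy of $\A^{q-p}(\R)$, whereas you fix one fewer point and confine $x_p,\ldots,x_q$ to a bounded interval, leading to your cube/simplex dichotomy; these are minor variations of the same idea, and your additional bookkeeping on the $S_n$-translates just makes explicit what the paper leaves implicit with ``up to homotopy.''
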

            \noindent When $q=p+2$, the generalized braid relations reduce to the classical braid relations,
            \[\sigma_p\sigma_{p+1}\sigma_p = \sigma_{p+1}\sigma_p\sigma_{p+1},\]
            upon left cancellation of $s_{p,p+2}$.
            
            \begin{proof}
            
            Let $\alpha = s_{p,q}s_{p, q-1}\sigma_{q-1}\hdots\sigma_p$ and $\beta=s_{p,q}\sigma_{q-1}\hdots\sigma_ps_{p+1,q}$. It is elementary to check that $\alpha, \beta$ both map to $(1)\in S_n$, and so correspond to loops in $\overline{M_{0,\mcA}}(\R)$. It remains to show that the loops $\alpha$ and $\beta$ are contractible.

            Fix the marked points $x_1,\ldots, x_p$, $x_{q+1}, \ldots, x_{n+1}$ at distinct locations on $\R\Pj^1$. By $a$-stability, the map $\iota: \A^{q-p}(\R)\into \overline{M_{0, \mcA}}(\R)$ given by $(x_{p+1}, \ldots,  x_{q})\mapsto (\R\Pj^1; x_{\bullet})$ is well-defined and an embedding. Up to homotopy, for $p\le m<r\le q$, $s_{m,r}$ represents a path in the Hassett space that lies completely in the image of $\iota$, regardless of the fixed starting point in this image. Thus, $\alpha$ and $\beta$ represent loops which lie completely in the image of $\iota$. Since $\A^{q-p}(\R)$ is simply connected, the loops $\alpha$ and $\beta$ are contractible.
        \end{proof}
        In other words, it is straightforward to show that the weighted cactus group $J_n^a$, defined abstractly as in Theorem \ref{thm:main-thm-pres-of-Jna}, surjects onto $\pi_1^{S_n}(\hassett{a}(\R))$. We do not, however, know of a simple proof that this map is injective, i.e. that these relations generate the full kernel as a normal subgroup, without the construction of a cell decomposition.

        We illustrate the effect of the reduction map in dimension $2$ in Figure \ref{fig:hexagon-merger-2}, with the aid of our cell decomposition. In general, $s_{p,q}$ reduces to a path across the center of a permutahedron of dimension $q-p$.

        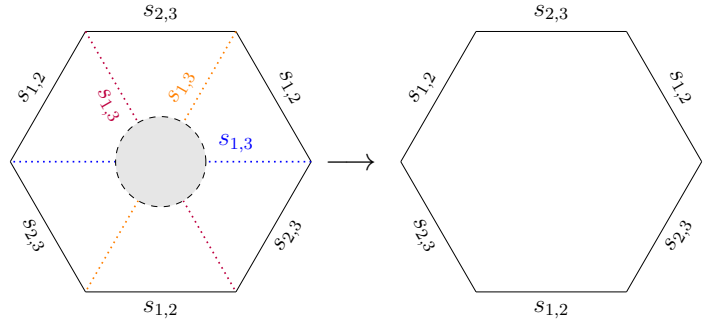
\begin{figure}
            \centering
            $$
            \vcenter{
            \hbox{
            \scalebox{0.8}{
            \begin{tikzpicture}[scale=2.5] 
                \def\r{0.3}
                \def\Xfactor{1.25}
                
                \coordinate[] (c0) at (1,0) {};
                \coordinate[] (c1) at (1/2,{sqrt(3)/2}) {};
                \coordinate[] (c2) at (-1/2,{sqrt(3)/2}) {};
                \coordinate[] (c3) at (-1,0) {};
                \coordinate[] (c4) at (-1/2,-{sqrt(3)/2}) {};
                \coordinate[] (c5) at (1/2,-{sqrt(3)/2}) {};
            
                \draw[] (c0) -- node[midway, sloped, above]{$s_{1,2}$} (c1);
                \draw[] (c1) -- node[midway, sloped, above]{$s_{2,3}$} (c2);
                \draw[] (c2) -- node[midway, sloped, above]{$s_{1,2}$} (c3);
                \draw[] (c3) -- node[midway, sloped, below]{$s_{2,3}$} (c4);
                \draw[] (c4) -- node[midway, sloped, below]{$s_{1,2}$} (c5);
                \draw[] (c5) -- node[midway, sloped, below]{$s_{2,3}$} (c0);
            
                \draw[dotted,blue,thick] (c0) -- node[sloped, above]{$s_{1,3}$} ++(c3) -- node[sloped, above]{} (c3);
                \draw[dotted, orange, thick] (c1) -- node[sloped, above]{$s_{1,3}$} ++(c4) -- node[sloped, above]{} (c4);
                \draw[dotted,purple,thick] (c2) -- node[sloped, below]{$s_{1,3}$} ++(c5) -- node[sloped, above]{} (c5);
            
                \draw[color=black, dashed, fill=gray!20] (0,0) circle (\r);
            \end{tikzpicture}
            }
            }}
            {\longrightarrow}
            \vcenter{
            \hbox{
            \scalebox{0.8}{
            \begin{tikzpicture}[scale=2.5] 
                \def\r{0.3}
                \def\Xfactor{1.25}
                
                \coordinate[] (c0) at (1,0) {};
                \coordinate[] (c1) at (1/2,{sqrt(3)/2}) {};
                \coordinate[] (c2) at (-1/2,{sqrt(3)/2}) {};
                \coordinate[] (c3) at (-1,0) {};
                \coordinate[] (c4) at (-1/2,-{sqrt(3)/2}) {};
                \coordinate[] (c5) at (1/2,-{sqrt(3)/2}) {};
            
                \draw[] (c0) -- node[midway, sloped, above]{$s_{1,2}$} (c1);
                \draw[] (c1) -- node[midway, sloped, above]{$s_{2,3}$} (c2);
                \draw[] (c2) -- node[midway, sloped, above]{$s_{1,2}$} (c3);
                \draw[] (c3) -- node[midway, sloped, below]{$s_{2,3}$} (c4);
                \draw[] (c4) -- node[midway, sloped, below]{$s_{1,2}$} (c5);
                \draw[] (c5) -- node[midway, sloped, below]{$s_{2,3}$} (c0);
            \end{tikzpicture}
            } 
            }}
            $$
            \caption{A local picture of the blowdown map $\DMumford{5}(\R)\onto \hassett{3}(\R) \cong \R\Pj^2$. Left: The central circle is an $\R\Pj^1$ with antipodal points identified, surrounded by three regions that are topological squares. Right: Merging regions and contracting the exceptional $\R\Pj^1$ gives a hexagon $\Pi_3$, in which the path $s_{1,3}$ becomes homotopic to $s_{1,2}s_{2,3}s_{1,2}$.}
            \label{fig:hexagon-merger-2}
        \end{figure}

    \section*{References}
    \printbibliography[title={\ }, heading=none]

\end{document}